\DeclareMathOperator{\Lie}{Lie}
\DeclareMathOperator{\rk}{rk}
\DeclareMathOperator{\Ad}{Ad}
\DeclareMathOperator{\ad}{ad}
\DeclareMathOperator{\Der}{Der}
\DeclareMathOperator{\soc}{Soc}
  \renewenvironment{thebibliography}[1]{
    \begin{oldthebibliography}{#1}
      \setlength{\parskip}{0ex}
      \setlength{\itemsep}{0ex}
  }
  {
    \end{oldthebibliography}
  }
\begin{document}

\newcounter{rownum}
\setcounter{rownum}{0}
\newcommand{\ab}{\addtocounter{rownum}{1}\arabic{rownum}}

\newcommand{\x}{$\times$}
\newcommand{\bb}{\mathbf}

\newcommand{\Ind}{\mathrm{Ind}}
\newcommand{\Char}{\mathrm{char}}
\newcommand{\hra}{\hookrightarrow}
\newtheorem{lemma}{Lemma}[section]
\newtheorem{theorem}[lemma]{Theorem}
\newtheorem*{TA}{Theorem A}
\newtheorem*{TB}{Theorem B}
\newtheorem*{TC}{Theorem C}
\newtheorem*{CorC}{Corollary C}
\newtheorem*{TD}{Theorem D}
\newtheorem*{TE}{Theorem E}
\newtheorem*{PF}{Proposition E}
\newtheorem*{C3}{Corollary 3}
\newtheorem*{T4}{Theorem 4}
\newtheorem*{C5}{Corollary 5}
\newtheorem*{C6}{Corollary 6}
\newtheorem*{C7}{Corollary 7}
\newtheorem*{C8}{Corollary 8}
\newtheorem*{claim}{Claim}
\newtheorem{cor}[lemma]{Corollary}
\newtheorem{conjecture}[lemma]{Conjecture}
\newtheorem{prop}[lemma]{Proposition}
\newtheorem{question}[lemma]{Question}
\theoremstyle{definition}
\newtheorem{example}[lemma]{Example}
\newtheorem{examples}[lemma]{Examples}
\theoremstyle{remark}
\newtheorem{remark}[lemma]{Remark}
\newtheorem{remarks}[lemma]{Remarks}
\newtheorem{obs}[lemma]{Observation}
\theoremstyle{definition}
\newtheorem{defn}[lemma]{Definition}

  \def\hal{\unskip\nobreak\hfil\penalty50\hskip10pt\hbox{}\nobreak
  \hfill\vrule height 5pt width 6pt depth 1pt\par\vskip 2mm}

\renewcommand{\labelenumi}{(\roman{enumi})}
\newcommand{\Hom}{\mathrm{Hom}}
\newcommand{\Int}{\mathrm{int}}
\newcommand{\Ext}{\mathrm{Ext}}
\newcommand{\opH}{\mathrm{H}}
\newcommand{\D}{\mathscr{D}}
\newcommand{\SO}{\mathrm{SO}}
\newcommand{\Sp}{\mathrm{Sp}}
\newcommand{\SL}{\mathrm{SL}}
\newcommand{\GL}{\mathrm{GL}}
\newcommand{\OO}{\mathcal{O}}
\newcommand{\diag}{\mathrm{diag}}
\newcommand{\End}{\mathrm{End}}
\newcommand{\tr}{\mathrm{tr}}
\newcommand{\Stab}{\mathrm{Stab}}
\newcommand{\red}{\mathrm{red}}
\newcommand{\Aut}{\mathrm{Aut}}
\renewcommand{\H}{\mathcal{H}}
\renewcommand{\u}{\mathfrak{u}}
\newcommand{\N}{\mathcal{N}}
\newcommand{\Z}{\mathbb{Z}}
\newcommand{\ud}{\mathrm{d}}
\newcommand{\la}{\langle}\newcommand{\ra}{\rangle}
\newcommand{\gl}{\mathfrak{gl}}
\newcommand{\g}{\mathfrak{g}}
\newcommand{\F}{\mathbb{F}}
\newcommand{\NN}{\mathbb{N}}
\newcommand{\m}{\mathfrak{m}}
\renewcommand{\b}{\mathfrak{b}}
\newcommand{\p}{\mathfrak{p}}
\newcommand{\q}{\mathfrak{q}}
\renewcommand{\l}{\mathfrak{l}}
\newcommand{\del}{\partial}
\newcommand{\h}{\mathfrak{h}}
\renewcommand{\t}{\mathfrak{t}}
\newcommand{\Id}{\mathrm{Id}}
\renewcommand{\k}{\mathfrak{k}}
\newcommand{\Gm}{\mathbb{G}_m}
\renewcommand{\c}{\mathfrak{c}}
\renewcommand{\r}{\mathfrak{r}}
\newcommand{\n}{\mathfrak{n}}
\newcommand{\s}{\mathfrak{s}}
\newcommand{\Q}{\mathbb{Q}}
\newcommand{\C}{\mathbb{C}}
\newcommand{\z}{\mathfrak{z}}
\newcommand{\pso}{\mathfrak{pso}}
\newcommand{\so}{\mathfrak{so}}
\renewcommand{\sl}{\mathfrak{sl}}
\newcommand{\psl}{\mathfrak{psl}}
\newcommand{\pgl}{\mathfrak{pgl}}
\renewcommand{\sp}{\mathfrak{sp}}
\newcommand{\Ga}{\mathbb{G}_a}

\newenvironment{changemargin}[1]{%
  \begin{list}{}{%
    \setlength{\topsep}{0pt}%
    \setlength{\topmargin}{#1}%
    \setlength{\listparindent}{\parindent}%
    \setlength{\itemindent}{\parindent}%
    \setlength{\parsep}{\parskip}%
  }%
  \item[]}{\end{list}}

\parindent=0pt
\addtolength{\parskip}{0.5\baselineskip}

\subjclass[2010]{17B45}
\title{Classification of the maximal subalgebras of exceptional Lie algebras over fields of good characteristic}
\author{Alexander Premet and David I. Stewart}
\address{School of Mathematics, The University of Manchester, Oxford Road, M13 9PL, UK}
\email{alexander.premet@manchester.ac.uk}
\address{University of Newcastle, Newcastle upon Tyne, NE1 7RU, UK} \email{david.stewart@ncl.ac.uk} 
\pagestyle{plain}
\begin{abstract}
Let $G$ be an exceptional simple algebraic group over an algebraically closed field $k$
and suppose that $p={\rm char}(k)$ is a good prime for $G$.
In this paper we classify the maximal Lie subalgebras $\m$ of the Lie algebra $\g=\Lie(G)$. Specifically, we show that
either $\m=\Lie(M)$ for some maximal connected subgroup $M$ of $G$ or $\m$
is a maximal Witt subalgebra of $\g$ or $\m$ is a maximal {\it exotic semidirect product}.
The conjugacy classes of maximal connected subgroups of $G$ are known thanks to the work of
Seitz, Testerman and Liebeck--Seitz.
All maximal Witt subalgebras of $\g$ are $G$-conjugate and they occur when $G$ is not of type ${\rm E}_6$ and  $p-1$ coincides with the Coxeter number of $G$. We show that there are
two conjugacy classes of maximal exotic semidirect products in $\g$, one in characteristic $5$ and one in characteristic $7$, and both occur when $G$ is a group of type ${\rm E}_7$.
\end{abstract}
\maketitle
\section{Introduction}\label{intro}
Unless otherwise specified, $G$ will denote a simple algebraic group of exceptional type defined over an algebraically closed field $k$ of characteristic $p>0$. We always assume that $p$ a good prime for $G$, that is $p>5$ if $G$ is of type ${\rm E}_8$ and $p>3$ in the other cases.
Under this hypothesis, the Lie algebra $\g=\Lie(G)$ is simple. Being the Lie algebra of an algebraic group, it carries a natural $[p]$th power map $\g\ni x\mapsto x^{[p]}\in \g$ equivariant under the adjoint action of $G$.
The goal of this paper is to classify the maximal Lie subalgebras $\m$ of $\g$ up to conjugacy under the adjoint action of $G$. The maximality of $\m$ implies that it is a restricted subalgebra of $\g$, i.e. has the property that $\m^{[p]}\subseteq \m$.   The main result of
\cite{P17} states that if ${\rm rad}(\m)\ne 0$  then
$\m=\Lie(P)$ for some maximal parabolic subgroup of $G$.
Therefore, in this paper we are concerned with the case where $\m$ is a semisimple Lie algebra. In prime characteristic this does not necessarily  mean that
$\m$ is a direct sum of its simple ideals.

We write $\OO(1;\underline{1})$ for the truncated polynomial ring $k[X]/(X^p)$. The derivation algebra of $\OO(1;\underline{1})$, denoted $W(1;\underline{1})$, is known as the  Witt algebra.
This restricted Lie algebra is a free $\OO(1;\underline{1})$-module of rank $1$
generated by the derivative $\partial$ with respect to the image of $X$ in $\OO(1;\underline{1})$. A restricted Lie subalgebra $\mathcal{D}$ of
$W(1;\underline{1})$ is called {\it transitive} if it does not preserve any
 proper nonzero ideals of
$\OO(1;\underline{1})$.

Let
$\h$ be any semisimple restricted Lie subalgebra of $\g$ and let ${\rm Soc}(\h)$ denote the socle of the adjoint $\h$-module $\h$. This is, of course, the sum of all {\it minimal} ideals of $\h$.
As one of the main steps in our classification, we show
that if ${\rm Soc}(\h)$ is indecomposable and not semisimple, then
${\rm Soc}(\h)\cong \sl_2\otimes \OO(1;\underline{1})$ and there exists a transitive Lie subalgebra $\mathcal{D}$ of the Witt algebra
$W(1;\underline{1})$ such that
\begin{equation}
\label{esdp-eq}
\h\,\cong\, (\sl_2\otimes \OO(1;\underline{1}))\rtimes ({\rm Id}\otimes \mathcal{D})\end{equation} as Lie algebras.
We call the semisimple restricted Lie subalgebras of this type {\it exotic semidirect products}, esdp's for short, and we show that under our assumptions on $G$ the Lie algebra $\g$ contains an esdp if and only if either $G$ is of type $E_7$ and
$p\in\{5,7\}$ or $G$ is of type  ${\rm E}_8$ and $p=7$. (Although esdp's do exist in Lie algebras $\Lie(G)$ of type ${\rm E}_8$ over fields of characteristic $5$ we ignore them in the present paper as $p=5$ is a bad prime for $G$.) It turned out that for $p>5$
any esdp in type ${\rm E}_8$ is contained in a proper regular subalgebra of $\g$ and hence is not maximal in $\g$.  We prove that in type ${\rm E}_7$ maximal esdp's of $\g$ do exist and form a single conjugacy class under the adjoint action of $G$. Furthermore, if $\h$ as in (\ref{esdp-eq}) is maximal in $\g$ then $\mathcal{D}\cong \sl_2$ when $p=5$ and $\mathcal{D}=W(1;\underline{1})$ when $p=7$.

As our next step we use the smoothness of centralisers $C_G(V)$, where $V$ is a subspace of $\g$, to show that if ${\rm Soc}(\m)$ is semisimple and contains more than one minimal ideal of $\m$, then there exists a semisimple (and non-simple) maximal connected subgroup $M$ of $G$ such that $\m=\Lie(M)$; see \S~\ref{ss-dec}. We mention that Proposition~\ref{decomp}
 applies to simple algebraic groups of classical types as well.

Having obtained the above results we are left with the case where ${\rm Soc}(\m)$ is a simple Lie algebra.
In this situation, we prove that if the derivation algebra $\Der(\m)$ is isomorphic to the Lie algebra of a reductive $k$-group, then $\m=\Lie(M)$ for some maximal connected
subgroup of $G$. If $\Der(\m)$ is not of that type, we show that $p-1$ equals the Coxeter number of $G$ and
$\m$ is a maximal Witt subalgebra of $\g$, which is unique up to $(\Ad\,G)$-conjugacy by the main result of \cite{HSMax}.

 We let $\N(\g)$ denote the nilpotent cone of the restricted Lie algebra $\g$ and write $\OO(D)$ for the adjoint $G$-orbit in $\N(\g)$ with Dynkin label $D$. A Lie subalgebra of $\g$ is called {\it regular} if it contains a maximal toral subalgebra of $\g$. Our results on semisimple restricted Lie subalgebras $\h$ of $\g$ containing a non-simple minimal ideal do not require the maximality hypothesis.
\begin{theorem}\label{thm:esdps} Let $\h$ be a semisimple restricted Lie subalgebra of $\g$ containing a minimal ideal which is not simple. Then $p\in\{5,7\}$, the group $G$ is of type ${\rm E}_7$ or ${\rm E}_8$, and
the following hold:
\begin{itemize}
\item[(i)\,] If $G$ is of type ${\rm E}_8$ then
$\h$ is contained in a regular subalgebra of type ${\rm E}_7{\rm A}_1$ and hence is not maximal in $\g$.

\smallskip

\noindent
\item[(ii)\,] If $G$ is of type ${\rm E}_7$ then ${\rm Soc}(\h)\cong \sl_2\otimes \OO(1;\underline{1})$ and $\h\cong (\sl_2\otimes \OO(1;\underline{1}))\rtimes ({\rm Id}\otimes\mathcal{D})$ for some transitive restricted Lie subalgebra
$\mathcal{D}$ of $W(1;\underline{1})$. In particular, $\h$ is an esdp.

\smallskip

\noindent
\item[(iii)\,] Suppose $G$ is an adjoint group of type ${\rm E}_7$ and let $N:=N_G({\rm Soc}(\h))$. Then $N$
is a closed connected subgroup of $G$ acting transitively of the set of all nonzero
$\sl_2$-triples of $\g$ contained in ${\rm Soc}(\h)$.
All nilpotent elements of such $\sl_2$-triples lie in
$\OO({\rm A}_3{\rm A}_2{\rm A}_1)$ when $p=5$ and
in $\OO({\rm A}_2{{\rm A}_1}^3)$ when $p=7$.

\smallskip

\noindent
\item[(iv)\,]
 Suppose $G$ is as in part~(iii) and let $\widetilde{\h}:=\n_\g({\rm Soc}(\h))$. Then $\widetilde{\h}=\h+\Lie(N)$
is a semisimple maximal Lie subalgebra of $\g$ and $\Lie(N)$ has codimension $1$ in $\widetilde{\h}$.

\smallskip

\noindent
\item[(v)\,] Suppose $\h$ is a maximal Lie subalgebra of $\g$. Then  $\mathcal{D}\cong \sl_2$ when $p=5$ and $\mathcal{D}=W(1;\underline{1})$ when $p=7$. Any two maximal esdp's of $\g$ are $(\Ad\,G)$-conjugate.

\end{itemize}
\end{theorem}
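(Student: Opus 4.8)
\emph{Strategy and reduction (proving (ii)).} I will bootstrap from the two structural inputs already in the paper: that an indecomposable, non-semisimple socle must be $\sl_2\otimes\OO(1;\underline{1})$ with $\h$ of the form \eqref{esdp-eq}, and that $\g$ contains an esdp only when $G$ has type ${\rm E}_7$ with $p\in\{5,7\}$ or type ${\rm E}_8$ with $p=7$. So let $I$ be a minimal ideal of $\h$ which is not simple; writing $I\cong S\otimes\OO(m;\underline{n})$ with $S$ simple, the radical $S\otimes\m$ of $I$ ($\m$ the maximal ideal of $\OO(m;\underline{n})$) is a nonzero solvable ideal, so neither $I$ nor $\soc(\h)$ is semisimple. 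To invoke the quoted result one needs $\soc(\h)$ indecomposable, i.e. $I$ the only minimal ideal: any second minimal ideal $J$ satisfies $[I,J]=0$, hence $J\subseteq\c_\g(I)$, and for $G$ of type ${\rm E}_7$ one shows $\c_\g(I)=0$ (via smoothness of $C_G(I)$ and the $\sl_2$-representation theory of $I$ inside $\g$), while for ${\rm E}_8$ one instead exhibits inside $\c_\g(I)$ the ${\rm A}_1$ factor of an ${\rm E}_7{\rm A}_1$ subsystem (used in (i)). Then the quoted result gives $\soc(\h)\cong\sl_2\otimes\OO(1;\underline{1})$ and $\h\cong(\sl_2\otimes\OO(1;\underline{1}))\rtimes(\Id\otimes\mathcal D)$ with $\mathcal D$ a transitive restricted subalgebra of $W(1;\underline{1})$, and the esdp--existence list forces $p\in\{5,7\}$ and $G$ of type ${\rm E}_7$ or ${\rm E}_8$. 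This is (ii) and the opening assertions.

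\emph{Part (i).} Here $G$ has type ${\rm E}_8$ and $p=7$. One shows that $(E\otimes 1,H\otimes 1,F\otimes 1)$ lies in a nilpotent orbit whose reductive centraliser contains a subalgebra $\cong\sl_2$ spanning the ${\rm A}_1$ factor of an ${\rm E}_7{\rm A}_1$ subsystem of ${\rm E}_8$; since $\soc(\h)$ is generated over $\sl_2\otimes 1$ by $\sl_2\otimes X$ and this ${\rm A}_1$ centralises $\sl_2\otimes X$ as well, we get $\soc(\h)\subseteq\c_\g(\Lie{\rm A}_1)=\Lie{\rm E}_7$ and hence $\widetilde{\h}=\n_\g(\soc(\h))\subseteq\Lie({\rm E}_7{\rm A}_1)$, a proper regular subalgebra of $\g$; so $\h$ is not maximal. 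The only real computation is the identification and centraliser of the relevant ${\rm E}_8$-orbit, which I expect to be $\OO({\rm A}_2{{\rm A}_1}^3)$, met already inside the ${\rm E}_7$-subsystem.

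\emph{Part (iii) and the scaffolding for (iv).} Now let $G$ be adjoint of type ${\rm E}_7$. Every $\sl_2$-subalgebra of $\soc(\h)$ maps isomorphically (as $p>3$) onto the quotient $\soc(\h)/(\sl_2\otimes X\OO(1;\underline{1}))\cong\sl_2$, so it is a Levi complement to an ideal of the nil radical $\sl_2\otimes X\OO(1;\underline{1})$; since $\opH^{1}(\sl_2,\sl_2)=0$, all such complements are conjugate under $\exp(\ad(\sl_2\otimes X\OO(1;\underline{1})))$, so all nonzero $\sl_2$-triples of $\g$ contained in $\soc(\h)$ lie in a single orbit of a connected subgroup of $N:=N_G(\soc(\h))$, and each of their nilpotent parts lies in the single $G$-orbit of $E\otimes 1$, which one identifies — from the $\ad(H\otimes 1)$-eigenvalues on $\g$ (at least $p$ copies each of $2,0,-2$) together with the orbit tables — as $\OO({\rm A}_3{\rm A}_2{\rm A}_1)$ for $p=5$ and $\OO({\rm A}_2{{\rm A}_1}^3)$ for $p=7$; connectedness of $N$ and the value of $\dim N$ then fall out of the centraliser structure of that orbit. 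This proves (iii). The inclusion $\widetilde{\h}=\n_\g(\soc(\h))\hookrightarrow\Der(\soc(\h))=(\sl_2\otimes\OO(1;\underline{1}))\rtimes(\Id\otimes W(1;\underline{1}))$ then identifies $\widetilde{\h}$ with $(\sl_2\otimes\OO(1;\underline{1}))\rtimes(\Id\otimes\mathcal D_0)$, where $\mathcal D_0\le W(1;\underline{1})$ is the subalgebra of those derivations of $\soc(\h)$ that extend to $\g$.

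\emph{Parts (iv) and (v).} Granting the identification of $\widetilde\h$ above, it is semisimple (its only proper nonzero ideal is $\soc(\h)$, and neither it nor $\soc(\h)$ is solvable), the equality $\widetilde{\h}=\h+\Lie(N)$ follows from transitivity of $\mathcal D$, and the codimension-$1$ statement expresses the non-smoothness of $N_G(\soc(\h))$, which is forced by the $[p]$-structure of the esdp (the ``extra'' infinitesimal normaliser being a nilpotent $\partial$ with $\partial^{[p]}\ne 0$). For maximality of $\widetilde\h$: a maximal subalgebra $\m\supseteq\widetilde{\h}$ is $\Lie(M)$ for $M$ maximal connected, or a Witt subalgebra, or an esdp; the Witt case is excluded since $p-1$ is not the Coxeter number $18$ of ${\rm E}_7$, the esdp case by dimension (when $p=7$) and a comparison of socles (when $p=5$), and the case $\m=\Lie(M)$ by embedding $\widetilde{\h}$ — which is semisimple with the non-simple minimal ideal $\soc(\h)$, whose nil radical meets $\Lie(R_{u}(M))$ trivially — into $\Lie(M')$ for the semisimple part $M'$ of a proper connected subgroup of ${\rm E}_7$, and then contradicting the orbit and centraliser data for $E\otimes 1$; hence $\m=\widetilde{\h}$. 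Finally, a maximal esdp $\h$ lies in $\n_\g(\soc(\h))=\widetilde{\h}\subsetneq\g$, so $\h=\widetilde{\h}$ and $\mathcal D=\mathcal D_0$, which is $\sl_2$ when $p=5$ and $W(1;\underline{1})$ when $p=7$; and any two socles $\sl_2\otimes\OO(1;\underline{1})$ become, after an $(\Ad G)$-conjugation, attached to a common $\sl_2$-triple (their triples lie in the same orbit, and $\sl_2$-triples with fixed nilpotent part are conjugate), after which $C_G$ of that triple is shown to act transitively on the admissible choices of $\sl_2\otimes X$ (the $\ad\sl_2$-submodules of $\g$ isomorphic to the adjoint $\sl_2$ which generate, with the triple, a copy of $\sl_2\otimes\OO(1;\underline{1})$); this gives (v). The two steps I expect to demand the most work are the precise determination of $\mathcal D_0$ and of $\Lie(N)$, where the outcome genuinely depends on whether $p=5$ or $p=7$, and the transitivity of this reductive-centraliser action.
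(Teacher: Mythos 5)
Your proposal is not self-contained: the two ``structural inputs'' you bootstrap from are not independent results but are precisely the core of the theorem you are asked to prove. The implication ``$\soc(\h)$ indecomposable and non-semisimple $\Rightarrow$ $\soc(\h)\cong\sl_2\otimes\OO(1;\underline{1})$ and $\h$ is an esdp'', together with the restriction to ${\rm E}_7$, ${\rm E}_8$ and $p\in\{5,7\}$, is exactly what the paper establishes in Lemma~\ref{socle} and Proposition~\ref{S}, and that is where almost all of the work lies: Block's theorem, the identification of $\g_{z\otimes 1}$ as a Levi of type ${\rm A}_{p-1}{\rm A}_{\ell-p}$ via the relation \eqref{derived}, the trace-form argument killing $\psl_{rp}$ and $m\ge 2$, the case-by-case analysis of weighted Dynkin diagrams pinning $e\otimes 1$ to $\OO({\rm A}_3{\rm A}_2{\rm A}_1)$ resp.\ $\OO({\rm A}_2{{\rm A}_1}^3)$, and the elimination of $S\cong W(1;\underline{1})$. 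Assuming all of this as given makes the argument circular. Even granting it, your part~(i) has a gap: knowing that the ${\rm A}_1$ factor of an ${\rm E}_7{\rm A}_1$ subsystem centralises the triple $\{e\otimes 1,h\otimes 1,f\otimes 1\}$ does not show it centralises $e\otimes x$ (an a priori unknown element of $\g_{e\otimes 1}(2)$), and $\soc(\h)\subseteq\Lie({\rm E}_7)$ does not by itself give $\n_\g(\soc(\h))\subseteq\Lie({\rm E}_7{\rm A}_1)$. The paper sidesteps both points with the involution $\sigma=\tau(-1)$ and the grading $\g(h_\tau,\pm\bar 2)=\g(\tau,\pm 2)$ (Corollary~\ref{nonmax}).

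For (iii) your $\opH^1(\sl_2,\sl_2)=0$ argument does show that all Levi complements to $\sl_2\otimes\OO(1;\underline{1})_{(1)}$ are conjugate under the abstract automorphisms $\exp(\ad x)$, $x\in S\otimes\OO(1;\underline{1})_{(1)}$ — but the theorem asserts transitivity of $N=N_G(\soc(\h))$, a subgroup of $G$, and you never show these automorphisms (nor $\Aut(S\otimes 1)$) are realised by elements of $G$. That realisation is the content of Proposition~\ref{expo} and most of \S~\ref{3.9}, where $\psi(N)$ is identified with ${\rm PGL}_2(\OO(1;\underline{1}))\rtimes\Aut(\OO(1;\underline{1}))$ (resp.\ ${\rm PGL}_2(\OO(1;\underline{1}))\rtimes\Aut_{\le 1}$); ``connectedness and $\dim N$ fall out of the centraliser structure'' is not an argument. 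The conjugacy of maximal esdp's in (v) rests on the uniqueness of $\soc(\h)$ up to $(\Ad G)$-conjugacy, which in the paper is the four-case computation of \S~\ref{3.7} (first pinning down $kD$ via Lemma~\ref{pindownd}, then solving for $e\otimes x$); you state this transitivity as something ``to be shown'' and defer it, as you also defer the determination of $\mathcal D$ ($\sl_2$ for $p=5$, $W(1;\underline{1})$ for $p=7$), which in the paper comes from identifying $\c_{e\otimes 1}\cap\widetilde{\h}$ with a maximal Witt subalgebra of the ${\rm G}_2$-centraliser when $p=7$. Finally, your explanation of the codimension-one statement is wrong: $\partial^{[p]}=\partial^p=0$ in $W(1;\underline{1})$; the discrepancy $\dim\widetilde\h=\dim\Lie(N)+1$ arises because $\partial\in\Der(\OO(1;\underline{1}))$ does not lie in $\Lie(\Aut(\OO(1;\underline{1})))=W(1;\underline{1})_{(0)}$, not because of a nonvanishing $p$th power.
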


Thus if $G$ is of type ${\rm E}_7$ and $\h$ satisfies the conditions of  Theorem~\ref{thm:esdps}
then $\widetilde{\h}=\n_\g({\rm Soc}(\h))$ is a semisimple maximal Lie subalgebra of $\g$, unique up to $G$-conjugacy, and its socle coincides with ${\rm Soc}(\h)\cong\sl_2\otimes \OO(1;\underline{1})$.  A more precise description of the group $N$ is given in \S~\ref{3.9}.

Let $T$ be a maximal torus $G$ and denote by $\Phi=\Phi(G,T)$ the root system of $G$ with respect to $T$. Let $\Pi$ be a basis of simple roots in $\Phi$ and $\tilde{\alpha}$ the highest root in the positive system $\Phi^+(\Pi)$. For any $\gamma\in \Phi$ we fix a nonzero element $e_\gamma$ in the root space $\g_\gamma$. A Zariski closed connected subgroup $M$ of $G$ is said to be a {\it maximal connected} subgroup of $G$ if it is maximal among Zariski closed proper connected subgroups of $G$.
\begin{theorem}\label{classicalthm} Let $\m$ be a  maximal Lie subalgebra of $\g$ and suppose that $\m$ is semisimple and all its minimal ideals are simple Lie algebras. Then one of the following two cases occurs:
\begin{itemize}
\item[(i)\,]
There exists a semisimple maximal connected subgroup $M$ of $G$ such that $\m=\Lie(M)$.

\smallskip

\noindent
\item[(ii)\,] The group $G$ is not of type ${\rm E}_6$, the Coxeter number of $G$ equals $p-1$, and
$\m$ is $(\Ad\,G)$-conjugate to the Witt subalgebra of $\g$ generated by the highest root vector $e_{\tilde{\alpha}}$ and the regular nilpotent element $\sum_{\alpha\in\Pi}\,e_{-\alpha}$.
\end{itemize}
\end{theorem}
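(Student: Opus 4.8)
The plan is to reduce, via Proposition~\ref{decomp}, to the case where $\soc(\m)$ is simple, and then to apply the dichotomy on the derivation algebra of $\soc(\m)$ that is described in the introduction.

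First, since $\m$ is semisimple and, by hypothesis, each minimal ideal of $\m$ is simple, we may write $\soc(\m)=\s_1\oplus\cdots\oplus\s_r$ with the $\s_i$ simple; in particular $\m$ is not an exotic semidirect product (cf.\ Theorem~\ref{thm:esdps}). If $r\ge 2$, then $\soc(\m)$ is semisimple and decomposable, and Proposition~\ref{decomp} produces a semisimple, non-simple, maximal connected subgroup $M$ of $G$ with $\m=\Lie(M)$, which is conclusion~(i). So from now on $r=1$; put $\s:=\soc(\m)$, the unique minimal ideal of $\m$. Since $\z(\s)=0$, the centraliser $\c_\g(\s)$ meets the semisimple algebra $\m$ in an ideal intersecting $\s$ trivially, so $\c_\g(\s)\cap\m=0$ and then, as $\c_\g(\s)\subseteq\n_\g(\s)$, $\c_\g(\s)=0$. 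Also $\s\subsetneq\g$ (otherwise $\m=\g$), so $\n_\g(\s)$ is a proper subalgebra of the simple algebra $\g$ containing the maximal subalgebra $\m$; hence $\m=\n_\g(\s)$, and restriction of inner derivations gives $\ad\,\s\subseteq\m\subseteq\Der(\s)$, with $\Der(\m)\cong\Der(\s)$. Because $\m$ is restricted with socle $\s$, the structure theory of semisimple restricted Lie algebras makes $\s$ a restricted simple Lie algebra, so the Block--Wilson--Strade--Premet classification leaves two cases: $\s$ classical, or $\s$ of Cartan type (the Melikian algebras occurring only for a few pairs $(G,p)$ with $p=5$, to be treated separately).

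Next comes the dichotomy. If $\Der(\s)$ is the Lie algebra of a reductive $k$-group -- equivalently $\s$ is classical -- then $\m=\Lie(M)$ for a maximal connected subgroup $M$: one realises $\s$ as $\Lie(S)$ for a closed connected $S\le G$ (a classical simple Lie algebra in good characteristic is algebraic, being generated by elements that are nilpotent in $\g$ with vanishing $[p]$th power, which exponentiate in $G$), the smoothness of $N_G(S)$ in good characteristic gives $\Lie(N_G(S)^\circ)=\n_\g(\s)=\m$, semisimplicity of $\m$ forces $M:=N_G(S)^\circ$ to be semisimple, and maximality of $\m$ forces $M$ to be maximal connected. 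This is conclusion~(i), the list of possible $M$ being supplied by the Seitz--Testerman--Liebeck--Seitz classification. If instead $\Der(\s)$ is not of reductive type, then $\s$ is of Cartan type (or Melikian), and the key assertion is that $\s\cong W(1;\underline{1})$: one rules out the higher-rank Witt algebras $W(m;\underline{1})$ with $m\ge 2$, the special, Hamiltonian and contact algebras, and the Melikian algebras as restricted simple subalgebras of an exceptional $\g$, by comparing $\dim\g\le 248$ with their dimensions and with the dimensions of their smallest faithful restricted modules, and by analysing the weights of a maximal torus of $\s$ on $\g$ together with the constraints imposed by the $[p]$-operation. Given $\s\cong W(1;\underline{1})$, the equality $\Der(W(1;\underline{1}))=W(1;\underline{1})$ (valid for $p\ge 5$) and $\ad\,\s\subseteq\m\subseteq\Der(\s)$ yield $\m=\s$, so $\m$ is a Witt subalgebra of $\g$. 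By the main result of \cite{HSMax}, a Witt subalgebra of $\g$ exists only when the Coxeter number of $G$ equals $p-1$, any two are $(\Ad\,G)$-conjugate, the subalgebra generated by $e_{\tilde{\alpha}}$ and $\sum_{\alpha\in\Pi}e_{-\alpha}$ is one of them, and such a subalgebra is maximal in $\g$ exactly when $G$ is not of type ${\rm E}_6$; since $\m$ is maximal, $G$ is not of type ${\rm E}_6$ and $\m$ is $(\Ad\,G)$-conjugate to this subalgebra, which is conclusion~(ii).

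The main obstacle is the Cartan-type step: ruling out every restricted simple Lie algebra of Cartan type other than $W(1;\underline{1})$, and the Melikian algebras, as a socle of $\m$ inside an exceptional $\g$. This rests on the representation theory of the exceptional Lie algebras combined with the structure theory of graded Cartan-type Lie algebras, and is distinctly harder than the reductive case, whose own technical points are the algebraicity of $\s$ and the smoothness of $N_G(S)$.
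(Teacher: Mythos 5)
Your top-level architecture matches the paper's: Proposition~\ref{decomp} for a decomposable socle, then $\soc(\m)\subseteq\m\subseteq\Der(\soc(\m))$ with a dichotomy between the classical and Witt cases, and \cite[Theorem~1.1]{HSMax} to finish case~(ii). But the classical branch, as you have written it, has a genuine gap. The assertion that a classical simple $\s\subseteq\g$ ``is algebraic, being generated by elements that are nilpotent in $\g$ with vanishing $[p]$th power, which exponentiate in $G$'' is precisely the point at issue, not something that can be quoted. An element $e\in\OO_{\rm min}(\s)$ with $e^{[p]}=0$ does lie in a one-parameter unipotent subgroup $U_e$ of $G$ (by \cite{McN04}, \cite{Sei}), but $\Ad\,x_e(t)=\sum_{i=0}^{2p-2}t^iX_e^{(i)}$ where only the operators $X_e^{(i)}$ with $i\le p-1$ are the divided powers $\frac{1}{i!}(\ad e)^i$; the operators with $p\le i\le 2p-2$ are not determined by $\ad e$ and need not preserve $\s$, so there is no a priori reason why $U_e$ should normalise $\s$, nor why the group generated by the $U_e$ should have Lie algebra $\s$. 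The paper's Proposition~\ref{class} deals with this by imposing the hypothesis $\g(\tau_e,2p-2)=0$ (which kills the uncontrolled terms on $\h\subseteq\bigoplus_{i\ge-2}\g(\tau_e,i)$), Proposition~\ref{rkge2} verifies that hypothesis using the classification of reachable nilpotent elements from \cite{PSt}, and the one genuine exception --- $G$ of type ${\rm E}_7$, $p=5$, $\OO_{\rm min}(\s)\cap\OO({\rm A}_4{\rm A}_1)\ne\emptyset$ --- is settled by a separate argument via \cite[Lemma~4.9]{ST}. The toral rank one case $\s\cong\sl_2$ likewise needs its own treatment (the paper produces a nonzero solvable ideal of $\m+\langle V\rangle$ when $\g(\tau,2p-2)\ne 0$, contradicting maximality). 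None of this is visible in your sketch, and the case $\g(\tau_e,2p-2)\ne 0$ really occurs, so the argument as stated would fail there.

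The second branch is less a wrong step than a misplaced burden of proof. The statement that a simple Lie subalgebra of an exceptional $\g$ in good characteristic is classical, $\psl_{rp}$, or $W(1;\underline{1})$ is exactly \cite[Theorem~1.3]{HSMax}, which the paper cites; you propose instead to eliminate all other Cartan-type and Melikian algebras by comparing dimensions and weights. Dimension counts alone cannot do this: for instance $H(2;\underline{1})^{(2)}$ has dimension $p^2-2$, $W(2;\underline{1})$ has dimension $2p^2$, and the restricted Melikian algebra has dimension $125$ when $p=5$, all comfortably below $\dim\g\le 248$. If you do not want to reprove the main theorem of \cite{HSMax}, you should cite it; if you do, the sketch offered is far from sufficient. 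Finally, a small point: Proposition~\ref{decomp} (and the later propositions) carry the hypothesis that $\m$ is non-regular, so the regular case must be disposed of first, as the paper does at the start of Section~\ref{socle-ss}.
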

Since all conjugacy classes of maximal connected subgroups
of $G$ are known thanks to earlier work of Dynkin \cite{Dyn}, Seitz \cite{Sei91}, Testerman \cite{Tes88} and Liebeck--Seitz \cite{LS04}, Theorems~\ref{thm:esdps} and \ref{classicalthm} give a complete answer to the problem of determining the maximal subalgebras of $\g$ up to conjugacy.

Let $G$ be a reductive $k$-group and $\g=\Lie(G)$.
Recall that $G$ is said to satisfy the {\it standard hypotheses} if $p$ is a good prime for $G$, the derived subgroup $\D G$ is simply connected, and $\g$ admits a non-degenerate $G$-invariant symmetric bilinear form.
Given a Lie subalgebra $\h$ of $\g$ we denote by ${\rm nil}(\h)$ the largest ideal of $\h$ consisting of nilpotent elements of $\g$. We are able to prove the following corollary of our classification, a Lie algebra analogue of the well known Borel--Tits theorem for algebraic groups.
\begin{cor}\label{B-T}
If $G$ is reductive $k$-group satisfying the standard hypotheses,
then for any Lie subalgebra $\h$ of $\g$ with ${\rm nil}(\h)\ne 0$ there exists a parabolic subgroup $P$ of $G$ such that
$\h\subseteq \Lie(P)$ and ${\rm nil}(\h)\subseteq \Lie(R_u(P))$.
\end{cor}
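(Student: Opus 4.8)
The plan is to derive the corollary from the classification by an induction on $\dim G$. After standard reductions (and noting that $\g$ carries no nonzero nilpotent ideal when $G$ is reductive satisfying the standard hypotheses, so that $\h\subsetneq\g$), one may take $G$ to be simple and $\h$ a proper subalgebra with $\u:={\rm nil}(\h)\ne 0$. For $G$ of classical type no classification is needed: $\u$ is a nonzero Lie algebra of nilpotent operators on the natural module $V$, so by Engel's theorem the descending filtration $V\supseteq\u V\supseteq\u^2 V\supseteq\cdots$ is a proper $\h$-stable flag (it is $\h$-stable because $\h$ normalises $\u$) on whose subquotients $\u$ acts trivially; replacing it by the associated isotropic flag when $G$ preserves a form, its stabiliser is a parabolic $P$ with $\h\subseteq\Lie(P)$ and $\u\subseteq\Lie(R_u(P))$.

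For $G$ of exceptional type, enlarge $\h$ to a maximal Lie subalgebra $\m$ of $\g$. By \cite{P17} together with Theorems~\ref{thm:esdps} and \ref{classicalthm}, $\m$ is either $\Lie(P)$ for a maximal parabolic $P$, or $\Lie(M)$ for a semisimple maximal connected subgroup $M$, or a maximal Witt subalgebra, or a maximal esdp. If $\m=\Lie(P)$, write $\Lie(P)=\Lie(L)\oplus\Lie(R_u(P))$ and let $\pi\colon\Lie(P)\twoheadrightarrow\Lie(L)$ be the (restricted) projection. Either $\pi(\u)=0$, in which case $P$ itself works; or $\pi(\u)$ is a nonzero nilpotent ideal of $\pi(\h)\subseteq\Lie(L)$, and as $L$ is reductive of smaller dimension satisfying the standard hypotheses, induction provides a parabolic $Q$ of $L$ with $\pi(\h)\subseteq\Lie(Q)$ and $\pi(\u)\subseteq{\rm nil}(\pi(\h))\subseteq\Lie(R_u(Q))$, whence $P':=Q\ltimes R_u(P)$ is a parabolic of $G$ with $\h\subseteq\Lie(P')$ and $\u\subseteq\Lie(R_u(Q))\oplus\Lie(R_u(P))=\Lie(R_u(P'))$.

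If $\m=\Lie(M)$, then $\u$ is the same computed in $\g$ or in $\Lie(M)$ (nilpotency of an element of $\Lie(M)\subseteq\g$ being intrinsic), and $M$ — after passing to an isogenous group with the same Lie algebra if necessary — is reductive of smaller dimension satisfying the standard hypotheses, so induction supplies a parabolic $Q$ of $M$ with $\h\subseteq\Lie(Q)$ and $\u\subseteq\Lie(R_u(Q))$, necessarily with $Q\ne M$. Now $R_u(Q)$ is a nontrivial connected unipotent subgroup of $G$, so the Borel--Tits theorem for algebraic groups produces a parabolic $P$ of $G$ with $R_u(Q)\subseteq R_u(P)$ and $N_G(R_u(Q))\subseteq P$; since $Q\subseteq N_G(R_u(Q))$ we conclude $\h\subseteq\Lie(Q)\subseteq\Lie(P)$ and $\u\subseteq\Lie(R_u(Q))\subseteq\Lie(R_u(P))$. (The analogous move fails for the Witt and esdp cases precisely because no ambient group is available, which is why those occur as separate, terminal cases.)

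Finally, $\m$ may be a maximal Witt subalgebra or a maximal esdp — possible only in types ${\rm E}_7$ and ${\rm E}_8$ for $p\in\{5,7\}$, with $\h$ then a proper subalgebra of $\m$. Here one argues from the explicit structure, using that $\m$ is graded compatibly with a cocharacter of $G$: for the Witt subalgebra the $\ad$-grading of $\m\cong W(1;\underline{1})$ by its maximal torus matches the height grading $\g=\bigoplus_d\g_d$, so that $\m\cap\g_{\ge 0}$ is a copy of $W(1;\underline{1})_{(0)}$ whose nilradical lies in $\g_{>0}=\Lie(R_u(B))$, while the degree $-1$ piece carries the regular nilpotent $\sum_{\alpha\in\Pi}e_{-\alpha}$; combined with the known small lattice of subalgebras of $W(1;\underline{1})$ and of the esdp, and, where needed, conjugation by $N_G({\rm Soc}(\m))$ and a Jacobson--Morozov cocharacter to absorb any residual degree-zero nilpotent directions into the unipotent radical, one checks directly that $\h\subseteq\Lie(P)$ and $\u\subseteq\Lie(R_u(P))$ for a suitable parabolic $P$. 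This case-analysis — more laborious for the esdp, with its two sub-cases $\mathcal{D}\cong\sl_2$ ($p=5$) and $\mathcal{D}=W(1;\underline{1})$ ($p=7$) — is the main obstacle; the remainder is the formal induction above layered on the classification established in the body of the paper.
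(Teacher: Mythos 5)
Your architecture matches the paper's: reduce to the (almost) simple case, dispose of classical types via Engel's theorem and the natural module, and for exceptional types enlarge $\h$ to a maximal subalgebra $\m$ and run through the classification. Two of your choices genuinely differ from the paper and are fine. First, you induct on $\dim G$ rather than on $\rk\,\D G$; this lets you treat all maximal connected subgroups $M$ uniformly, whereas the paper must handle the rank-preserving regular subgroups (e.g.\ $\sl_8$ in ${\rm E}_7$, $\sl_9$ or $\so_{16}$ in ${\rm E}_8$) by re-running the classical natural-module argument rather than by induction. Second, in the case $\m=\Lie(M)$ you pass from a parabolic $Q$ of $M$ to a parabolic of $G$ via the group-theoretic Borel--Tits theorem applied to $R_u(Q)$, whereas the paper tracks a cocharacter $\lambda\in X_*(M)\subset X_*(G)$ and takes $P(\lambda)$; both work, and yours avoids some bookkeeping, though you should still spell out (as the paper does) why a simply connected cover of $M$ has the same Lie algebra and satisfies the standard hypotheses --- this uses that $M$ has no components of type ${\rm A}_{rp-1}$ in the regular case and that the Killing form is non-degenerate on each factor in the non-regular case.

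The genuine gap is the esdp case (and, to a lesser extent, the transitive subcase of the Witt case). You explicitly defer these to ``one checks directly,'' but for the esdp there is no single grading trick that does the job: the paper devotes all of \S 5.4 to it, splitting according to whether $\pi(\h)\subseteq W(1;\underline{1})_{(0)}$ (handled via $\Lie(N)$ and a cocharacter of $C_{e\otimes 1}$), whether ${\rm nil}(\h)\cap{\rm Soc}(\m)$ vanishes (handled via Lemma~\ref{tran} and optimal parabolics), and, in the main case, whether $\pi(\h)$ contains $\partial$ or equals $k(1+x)\partial$. Each branch needs real work: conjugating $D$ by the unipotent group $\mathcal{R}$ to normalise its form, using the evaluation map $\varphi$ to show ${\rm nil}_0(\h)\subseteq e\otimes\OO(1;\underline{1})$, and in the toral case a centroid argument plus the transitivity of $N$ on $\sl_2$-triples of ${\rm Soc}(\m)$ to force $D=(1+x)\partial$, before concluding $\h\subseteq(\b\otimes\OO(1;\underline{1}))\rtimes({\rm Id}\otimes\mathcal{D})\subseteq\Lie(P(\tau))$. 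Your phrase ``absorb any residual degree-zero nilpotent directions into the unipotent radical'' does not substitute for this: the danger is precisely that $\h$ could a priori spread across all of $S\otimes\OO(1;\underline{1})$ rather than sitting inside a Borel tensored with $\OO(1;\underline{1})$, and ruling that out is the content of the case analysis. As written, the hardest case of the corollary is asserted rather than proved.
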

We stress that Corollary~\ref{B-T} breaks down very badly if we drop some of our assumptions on $G$. Indeed, if $G={\rm PGL}(V)$, where
$\dim V=p$,  then  there exists a $2$-dimensional abelian Lie subalgebra $\h$ of $\g=\mathfrak{pgl}(V)$ with $\h\subset \N(\g)$ whose inverse image in $\gl(V)$ acts irreducibly on $V$. This means that $\h$ cannot be included into a proper parabolic subalgebra of $\g$. In this example $G=\D G$ is a simple algebraic $k$-group of adjoint type.
 On the other hand, if $G$ is a simply connected $k$-group of
 type ${\rm A}$, ${\rm B}$, ${\rm C}$ or ${\rm D}$ and $p$ is good for $G$, then  $\g$ is isomorphic to one of $\sl(V)$, $\so(V)$ or $\sp(V)$ as a restricted Lie algebra. For such Lie algebras, Corollary~\ref{B-T} is a straightforward consequence of the fact that
${\rm nil}(\m)$
annihilates a nonzero proper subspace of $V$.
This indicates that proving Corollary~\ref{B-T} reduces quickly to the case where $G$ is a simple algebraic group of type ${\rm G}_2$, ${\rm F}_4$, ${\rm E}_6$, ${\rm E}_7$ or ${\rm E}_8$; see \S~\ref{classical case}.
Since there are no good substitutes of $V$ for exceptional groups,
our proof of  Corollary~\ref{B-T} relies very heavily on
Theorems~\ref{thm:esdps} and \ref{classicalthm}.

As an immediate consequence of Corollary~\ref{B-T} we obtain the following generalisation of one of the classical results of Lie theory first proved by Morozov in the characteristic zero case; see \cite[Ch.~VIII, \S 10, Th.~2]{Bour1}:
\begin{cor}\label{Mor}
	Suppose $G$ satisfies the standard hypotheses
	and let $\q$ be a Lie subalgebra of $\g=\Lie(G)$ such that ${\rm nil}(\q)\ne 0$ and $\q=\n_{\g}({\rm nil}(\q))$. Then there exists a proper parabolic subgroup $P$ of $G$ such that $\q=\Lie(P)$ and ${\rm nil}(\q)=\Lie(R_u(P))$.
\end{cor}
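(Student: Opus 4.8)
The plan is to derive Corollary~\ref{Mor} as a quick formal consequence of Corollary~\ref{B-T}, in the spirit of Morozov's classical argument. Write $\n:={\rm nil}(\q)$ and $\u_P:=\Lie(R_u(P))$ throughout. Since $\n\ne 0$ by hypothesis, Corollary~\ref{B-T} (applied to $\h=\q$) yields a parabolic subgroup $P$ of $G$ with $\q\subseteq\Lie(P)$ and $\n\subseteq\u_P$; as $\u_P\supseteq\n\ne 0$ we have $R_u(P)\ne 1$, and since $G$ is reductive this forces $P\ne G$, so $P$ is a proper parabolic. The task is to upgrade the two inclusions to the equalities $\n=\u_P$ and $\q=\Lie(P)$.

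For the first of these I would argue as follows. Since $\u_P$ is an ideal of $\Lie(P)$ and $\q\subseteq\Lie(P)$, we get $[\q,\q\cap\u_P]\subseteq\q\cap\u_P$; moreover every element of $\u_P=\Lie(R_u(P))$ lies in $\N(\g)$, so $\q\cap\u_P$ is an ideal of $\q$ consisting of nilpotent elements of $\g$, whence $\q\cap\u_P\subseteq\n$ by maximality of ${\rm nil}(\q)$. As $\n\subseteq\q\cap\u_P$ is automatic, $\q\cap\u_P=\n$. Now suppose $\n\subsetneq\u_P$. The Lie algebra $\u_P$ is nilpotent (it is the sum of the root spaces indexed by the roots occurring in $R_u(P)$, graded by the height of those roots), so a proper subalgebra is strictly smaller than its normaliser inside $\u_P$; thus $\n\subsetneq\n_{\u_P}(\n)$. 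But $\n_{\u_P}(\n)=\u_P\cap\n_\g(\n)=\u_P\cap\q=\n$, using $\q=\n_\g(\n)$ and the previous sentence --- a contradiction. Hence $\n=\u_P=\Lie(R_u(P))$.

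Finally, $\u_P$ being an ideal of $\Lie(P)$ means $[y,\u_P]\subseteq\u_P=\n$ for every $y\in\Lie(P)$, so $y\in\n_\g(\n)=\q$; therefore $\Lie(P)\subseteq\q$, and combined with $\q\subseteq\Lie(P)$ this gives $\q=\Lie(P)$ and ${\rm nil}(\q)=\n=\Lie(R_u(P))$ with $P$ proper, as required.

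I do not anticipate a genuine obstacle: granting Corollary~\ref{B-T}, the argument is purely formal, and the only inputs besides it --- that $\Lie(R_u(P))$ is an ideal of $\Lie(P)$ lying in $\N(\g)$ and is nilpotent as a Lie algebra --- are standard under the standard hypotheses. The one point deserving a careful sentence is the normaliser-growth property of nilpotent Lie algebras (a proper subalgebra of a nilpotent Lie algebra is properly contained in its own normaliser), since this is precisely what turns $\n\subseteq\u_P$ into $\n=\u_P$; everything else is bookkeeping.
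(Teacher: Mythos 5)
Your proposal is correct and follows essentially the same route as the paper: apply Corollary~\ref{B-T}, show that ${\rm nil}(\q)$ would be properly contained in its normaliser inside $\Lie(R_u(P))$ if the inclusion were strict (the paper phrases this via Engel's theorem acting on $\u/{\rm nil}(\q)$, you via the equivalent normaliser-growth property of nilpotent Lie algebras), derive a contradiction with the maximality of ${\rm nil}(\q)$, and then conclude $\Lie(P)\subseteq\n_\g(\u_P)=\q$. No gaps.
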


Indeed,  by Corollary~\ref{B-T}, there is a proper parabolic subgroup $P$ of $G$ such that $\q\subseteq \Lie(P)$ and ${\rm nil}(\q)\subseteq \Lie(R_u(P))$. Let $\u=\Lie(R_u(P))$
and denote by $\n$ the normaliser of ${\rm nil}(\q)$ in $\u$. Since $\q\subseteq \Lie(P)$ normalises both
$\u$ and ${\rm nil}(\q)$, it is easy to see that $\n\subseteq \n_\g({\rm nil}(\q))=\q$ is an ideal of $\q$ consisting of nilpotent elements of $\g$.
Suppose for a contradiction that ${\rm nil}(\q)\ne \u$. Then it follows from Engel's theorem that ${\rm nil}(\q)$ annihilates a nonzero vector of the factor space $\u/{\rm nil}(\q)$ forcing ${\rm nil}(\q)$ to be a {\it proper} ideal of $\n$. On the other hand, since $\n$ consists of nilpotent elements of $\g$ it must be that $\n\subseteq {\rm nil}(\q)$. This contradiction shows that ${\rm nil}(\q)=\u$. But then $\q=\n_\g(\u)$ contains $\Lie(P)$. Since $\q\subseteq \Lie(P)$, we now deduce that $\q=\Lie(P)$ and ${\rm nil}(\q)=\Lie(R_u(P))$, as wanted.

Corollary~\ref{Mor} answers a question posed to one of the authors by Donna Testerman. We finish the introduction by mentioning some interesting
open problems related to classifying the maximal subalgebras
of Lie algebras of simple algebraic groups.

It is immediate from our classification that if $G$ is an exceptional group and $p$ is good for $G$, then the set of all maximal Lie subalgebras of $\g$ splits into finitely many orbits under the adjoint action of $G$. We do not know whether the number of $(\Ad\,G)$-orbits of maximal Lie subalgebras of $\g=\Lie(G)$  is finite in the case where $G$ is a group of type ${\rm B}$, ${\rm C}$, ${\rm D}$ or $G=\SL(V)$ and $p\nmid \dim V$.
The problem is closely related with the fact that in dimension $d=p^{n}-s$, where $n\ge 4$ and $s\in\{1,2\}$, there exist infinitely many isomorphism classes of $d$-dimensional simple Lie algebras over $k$ (this was first observed by Kac in the early  $1970$s). When $p>3$, it follows from the Block--Wilson--Strade--Premet classification theorem that almost all of them (excepting finitely many isomorphism classes) belong to infinite families of filtered Lie algebras of Cartan type $H$.
In order to clarify the situation one would need to describe all irreducible restricted representations $\rho\colon L_p\to \gl(V)$ of the $p$-envelope $L_p$ of every filtered Hamiltonian algebra $L\cong \ad L$ in $\Der(L)$ and then determine the Lie subalgebras of $\gl(V)$ containing $\rho(L_p)$.

When $p\,\vert\dim V$ the main result of \cite{P17} is no longer valid for $\sl(V)$ which contains maximal subalgebras that are neither semisimple nor parabolic. In fact, every maximal subalgebra of $\sl(V)$ acting irreducibly on $V$ is neither
semisimple nor parabolic as it must contain the scalar endomorphisms of $V$. This leads to intriguing representation-theoretic problems.
As a very special example, it is not known at present whether the non-split central extension of the Witt algebra $W(1;\underline{1})$ given by the
Block--Gelfand--Fuchs cocycle
can appear as a maximal subalgebra of $\sl(V)$ for some vector space $V$ with $p\,\vert \dim V$.

If $\mathcal{H}$ is a simple algebraic $k$-group and $\h=\Lie({\mathcal H})$, then to every linear function $\chi\in\h^*$ one can attach at least one irreducible $\h$-module $E$ with $p$-character $\chi$. Let $\rho\colon\,\h\to \gl(E)$
denote the corresponding representation of $\h$. It is known that under mild assumptions of $p$ and $\h$ the dimension of $E$ is divisible by $p^{d(\chi)}$ where $2d(\chi)$ is the dimension of the coadjoint $\mathcal{H}$-orbit of $\chi$. Another challenging open problem which arises naturally in this setting is to determine all pairs $(\chi, \rho)$
for which $\chi\ne 0$ and $k\,{\rm Id}_E+\rho(\h)$ is a maximal Lie subalgebra of $\sl(E)$. It can be shown by using finiteness of the number of unstable coadjoint $\mathcal{H}$-orbits and earlier results of Block, Kac and Friedlander--Parshall that the number of such pairs (up to a natural equivalence relation) is finite.

Very little is known about maximal Lie subalgebras of exceptional Lie algebras $\g$ over fields of bad characteristic. Recent work of Thomas Purslow \cite{Tom} shows that some strange simple Lie algebras which have no analogues in characteristic $p>3$ do appear as maximal subalgebras of $\g$. It seems that a detailed
investigation of the above-mentioned problems could lead to interesting new results in modular representation theory and structure theory of simple Lie algebras.

\section{Preliminaries}\label{Prelim}
\subsection{Basic properties of restricted Lie algebras}\label{intro0} Let $\mathcal{L}$ be a Lie algebra over an algebraically closed field of characteristic $p>0$. We say that
$\mathcal{L}$ is {\it restrictable}
if $(\ad X)^p$ is an inner derivation for every $X\in\mathcal{L}$. Any restrictable Lie algebra $\mathcal{L}$ can be endowed with a $p$th power map (or a $p$-operation) $X\longmapsto X^{[p]}$ such that 
\begin{itemize}
\item[(i)\,] $(\lambda X)^{[p]}\,=\,\lambda^p X^{[p]}$ for all $\lambda\in k$ and $X\in\mathcal{L}$;

\smallskip

\item[(ii)\,] $(\ad(X^{[p]})\,=\,(\ad X)^p$ for all $X\in \mathcal{L}$;

\smallskip

\item[(iii)\,] $(X+Y)^{[p]}\,=\,X^{[p]}+Y^{[p]}+
\textstyle{\sum}_{i=1}^{p-1}\,s_i(X,Y)$ for all $X,Y\in\mathcal{L}$, where $is_i(X,Y)$ is 
the coefficient of $t^{i-1}$ in $\ad(tX+Y)^{p-1}(X)$ expressed as a sum of Lie monomials in $X$ and $Y$.
\end{itemize}
Such a $p$-operation is unique up to a $p$-linear map
$\varphi\colon \mathcal{L}\to \z(\mathcal{L})$
where $\z(\mathcal{L})$ is the centre of $\mathcal{L}$. Indeed, if $\varphi$ is such a map then the operation $X\longmapsto X^{[p]}+\varphi(X)$ also satisfies the properties (i), (ii) and (iii). 
A pair $(\mathcal{L},[p])$, where $\mathcal{L}$ is a restrictable Lie algebra and $[p]$ is a $p$th power map on $\mathcal{L}$, is called a {\it restricted Lie algebra} (or a $p$-Lie algebra). If $\mathcal{L}$
is restrictable and centreless then it admits a unique restricted Lie algebra structure. 
For any $k$-algebra $\mathcal{A}$ (not necessarily associative or Lie) the Lie algebra $\Der(\mathcal{A})$ of all derivations of $\mathcal{A}$ carries a natural restricted Lie algebra structure which assigns to any $D\in \Der(\mathcal{A})$ the $p$th power of
the endomorphism $D$ in $\End(\mathcal{A})$.

Now suppose that $\mathcal{L}$ is a finite dimensional restricted Lie algebra over $k$. A Lie subalgebra  $\mathcal{M}$ of $\mathcal{L}$ is called {\it restricted} if $x^{[p]}\in\mathcal{M}$ for all
$x\in\mathcal{M}$.
For any $x\in \mathcal{L}$ the centraliser $\mathcal{L}_x:=\{y\in\mathcal{L}\,|\,\,[x,y]=0\}$ is a restricted Lie subalgebra of $\mathcal{L}$. Indeed, $[y^{[p]},x]=(\ad y)^p(x)=-(\ad y)^{p-1}([x,y])=0$ for all $y\in\mathcal{L}_x$. If $\mathcal{M}$ is spanned over $k$ by elements $x_1,\ldots, x_r$ such that $x_i^{[p]}\in\mathcal{M}$ for all $1\le i\le r$, then $\mathcal{M}$
is a restricted Lie subalgebra of $\mathcal{L}$. Indeed, it follows from (iii) that $(\sum_{i=1}^r\lambda_ix_i)^{[p]}-
\sum_{i=1}^r\lambda_i^px_i^{[p]}\in[\mathcal{M},\mathcal{M}]$ for all $\lambda_i\in k$.

If $\mathcal{L}=\Lie(S)$, where $S$ is a linear algebraic $k$-group, then the Lie algebra $\mathcal{L}$ identifies with the Lie algebra of all left invariant derivations of the coordinate algebra $k[S]$. Since in characteristic $p>0$ the associative
$p$th power of any left invariant derivation of $k[S]$ is again a left invariant derivation, $\mathcal{L}$ carries a canonical
restricted Lie algebra structure which has the property that
$((\Ad\,g)(x))^{[p]}=(\Ad\, g)(x^{[p]})$ for all $g\in S$ and $x\in\mathcal{L}$. Moreover, if $H$ is a closed subgroup of $S$ then it follows from \cite[Proposition~3.11]{Borel} that $\Lie(H)$ (regarded with its own canonical $p$th power map) identifies with a restricted Lie subalgebra of $\mathcal{L}$.

An element $x$ of a restricted Lie algebra $\mathcal{L}$ is called {\it semisimple} if it lies in the the $k$-span of all $x^{[p]^i}$ with $i\ge 1$.
If $x$ is semismple then $\ad x$ is a diagonalisable endomorphism of $\mathcal{L}$. A semsimple element $x$ of $\mathcal{L}$ is called {\it toral} if $x^{[p]}=x$. If $x$ is toral then all eigenvalues of $\ad x$ lie in $\F_p$.
A restricted Lie subalgebra of $\mathcal{L}$ is called {\it toral} if it consists of semisimple elements of $\mathcal{L}$. Since $k$ is algebraically closed, any toral subalgebra $\t$ of $\mathcal{L}$ is abelian and contains a $k$-basis consisting of toral elements of $\mathcal{L}$. More precisely, the set $\t^{\rm tor}$ of all toral elements of $\t$
is an $\F_p$-form of $\t$, so that $\t\cong \t^{\rm tor}\otimes_{\F_p}k$ as $k$-vector spaces.
From this it is immediate that $\t$ can be generated under the $p$th power map by a single element $x$, that is $ \t\,=\,{\rm span}\{x^{[p]^i}\,|\,\, i\in\Z_{\ge 0}\}$.
The maximal dimension of toral Lie subalgebras of $\mathcal{L}$ 
is often referred to as the {\it toral rank} of $\mathcal{L}$.
If $\mathcal{L}=\Lie(S)$ and $H$ is a torus of $S$ then our remark in the previous paragraph implies that $\Lie(H)$ is a toral subalgebra of $\mathcal{L}$.
Conversely, it follows from \cite[Proposition~11.8]{Borel} and the preceding remark that any toral subalgebra of $\mathcal{L}$ is contained in $\Lie(T)$ for some maximal torus $T$ of $S$. So the toral rank of $\Lie(S)$ coincides with the rank of the group $S$.

Given a finite dimensional semisimple Lie algebra $L$ over $k$ we denote by $L_p$ the $p$-envelope of $L\cong\ad L$ in $\Der(L)$, that is, the smallest restricted Lie subalgebra of $\Der(L)$ containing $L$. The restricted Lie algebra $L_p$ is semisimple and any finite dimensional semisimple
$p$-envelope of $L$ is isomorphic to $L_p$ in the category of restricted Lie algebras. This is immediate from \cite[Definition~1.1.2 and Corollary~1.1.8]{Str04}. The derived subalgebra of $L_p$ coincides with $[L,L]$ and we have that $L=L_p$ if and only if $L$ is restrictable. If $L$ is simple and non-restrictable then the derived subalgebra
of the restricted Lie algebra $L_p$ does not admit a restricted Lie algebra structure.

Now suppose that $G$ is a connected reductive algebraic group over $k$ and let $\g=\Lie(G)$. In contrast with the preceding remark we have the following:
\begin{lemma}\label{ss-der}
If $x$ is a semisimple element of $\g$ then $[\g_x,\g_x]$  is a restricted Lie subalgebra of $\g$.	
\end{lemma}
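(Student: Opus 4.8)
The plan is to exploit the standard Jordan decomposition in $\g$ relative to the reductive group $G$ together with the structure of centralisers of semisimple elements. Since $x$ is semisimple, we may assume (after conjugating by an element of $G$, which preserves the $[p]$-operation) that $x\in\Lie(T)$ for a maximal torus $T$ of $G$; more precisely $x$ is toral after rescaling, but what matters is that $\ad x$ is diagonalisable with eigenvalues in $k$. The centraliser $\g_x=\c_\g(x)$ is then the Lie algebra of the reductive group $H:=C_G(x)^{\circ}$. Indeed, the centraliser of a semisimple element of $\g$ in $G$ is reductive under our hypotheses on $p$ (this is classical for good $p$; see e.g. the discussion of centralisers in \cite{Borel}), and $\Lie(C_G(x))=\g_x$ holds because the scheme-theoretic centraliser of a semisimple element is smooth in good characteristic. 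Thus $\g_x=\Lie(H)$ with $H$ connected reductive.

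Given this, the derived subalgebra $[\g_x,\g_x]=[\Lie(H),\Lie(H)]$ is exactly $\Lie(\D H)$, the Lie algebra of the derived group $\D H$, which is a closed connected (semisimple) subgroup of $G$. By the remarks recalled in \S\ref{intro0} (the passage from \cite[Proposition~3.11]{Borel}), the Lie algebra of any closed subgroup of $G$, taken with its own canonical $p$th power map, is a restricted Lie subalgebra of $\g$. Hence $[\g_x,\g_x]=\Lie(\D H)$ is restricted, which is precisely the assertion.

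I would present the argument in the following order: (1) reduce to $x\in\Lie(T)$, noting $G$-equivariance of $[p]$; (2) invoke smoothness of $C_G(x)$ in good characteristic to get $\g_x=\Lie(H)$ with $H=C_G(x)^{\circ}$ reductive; (3) identify $[\g_x,\g_x]$ with $\Lie(\D H)$; (4) conclude via the fact that Lie algebras of closed subgroups are restricted subalgebras. The one point that genuinely needs care — and which I expect to be the main obstacle, or at least the only non-formal input — is step (2): one must know that under the running hypothesis that $p$ is good for $G$, the centraliser $C_G(x)$ of a semisimple element $x\in\g$ is smooth and reductive, so that $\g_x$ really is the Lie algebra of a reductive group rather than merely containing it. This is where the good-characteristic assumption is used, and it should be cited rather than reproved. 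Note also the contrast with the general semisimple Lie algebra case discussed just before the lemma: there the derived subalgebra of a $p$-envelope need not be restricted, so the lemma genuinely uses that $\g$ comes from a reductive group.
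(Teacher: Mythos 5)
Your steps (1), (2) and (4) are sound; in fact the smoothness and reductivity of $C_G(x)$ for semisimple $x$ require no hypothesis on $p$ at all, since for $x\in\Lie(T)$ one sees directly that $\g_x=\Lie(T)\oplus\bigoplus_{\alpha\in\Phi_x}\g_\alpha$ with $\Phi_x=\{\alpha\mid({\rm d}_1\alpha)(x)=0\}$ a closed symmetric subsystem, so $\g_x$ is the Lie algebra of the reductive subgroup generated by $T$ and the $U_\alpha$ with $\alpha\in\Phi_x$. (This matters, because the lemma is stated — and proved in the paper — for arbitrary connected reductive $G$ in arbitrary positive characteristic; the proof even addresses $p=2$.) The genuine gap is step (3): the identity $[\Lie(H),\Lie(H)]=\Lie(\D H)$ is \emph{false} in general for a connected reductive group $H$ in characteristic $p$. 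The basic counterexample is $H={\rm PGL}_{rp}(k)$: here $\D H=H$, so $\Lie(\D H)=\pgl_{rp}(k)$, while $[\pgl_{rp},\pgl_{rp}]=\psl_{rp}$ has codimension one. Such components do occur among centralisers of semisimple elements (already for $G={\rm PGL}_p$ and $x=0$, and the type-${\rm A}_{rp-1}$ configurations are precisely the delicate ones throughout the paper). So your argument only identifies $[\g_x,\g_x]$ with a possibly \emph{proper} subalgebra of $\Lie(\D H)$, and restrictedness of $\Lie(\D H)$ says nothing about a proper subalgebra — indeed the paragraph immediately preceding the lemma records that derived subalgebras of restricted Lie algebras can fail to be restricted, which is the whole point of stating the lemma.

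The conclusion does survive in the problematic cases (for instance $\psl_{rp}$ is $p$-closed in $\pgl_{rp}$ because ${\rm tr}(X^p)={\rm tr}(X)^p=0$), but that needs an argument your proof does not supply. The paper sidesteps the issue by exhibiting an explicit spanning set of $[\g_x,\g_x]$ consisting of root vectors $e_\alpha$ and brackets $[e_\alpha,e_\beta]$ with $\alpha,\beta\in\Phi_x$, and checking that each spanning element $v$ satisfies $v^{[p]}\in kv$ (for $e_\alpha$ because $\Lie(U_\alpha)$ is restricted, and for $[e_\alpha,e_{-\alpha}]$ because it lies in the three-dimensional restricted subalgebra $\Lie(S_\alpha)\cong\sl_2$ or $\pgl_2$); the general remark from \S~\ref{intro0} about subalgebras spanned by elements whose $[p]$th powers they contain then finishes the proof. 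To repair your argument you would have to add exactly this verification for the coroot elements $[e_\alpha,e_{-\alpha}]$, at which point the detour through $\D H$ and smoothness of centralisers becomes unnecessary.
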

\begin{proof}
Let $T$ be a maximal torus of $G$ and let $\Phi$ be the root system of $G$ with respect to $T$. 
By the above discussion we may assume without loss that
$x\in \Lie(T)$.
Given $\alpha\in \Phi$ we denote by $U_\alpha$ the unipotent root subgroup of $G$ associated with $\alpha$ and write $S_\alpha$ for the subgroup of $G$ generated by $U_\alpha$ and $U_{-\alpha}$. Each $S_\alpha$ is a simple algebraic subgroup of type ${\rm A}_1$ in $G$.
Since $\Lie(U_\alpha)=k e_{\alpha}$ is a restricted Lie subalgebra of $\g$ it must be that $e_\alpha^{[p]}=0$. Since $\Lie(S_\alpha)$ is a restricted Lie subalgebra of $\g$ isomorphic to $\sl_2(k)$ or $ \mathfrak{pgl}_2(k)$, we have that $[e_\alpha,e_{-\alpha}]^{[p]}\in k[e_\alpha,e_{-\alpha}]$
(if $k$ has characteristic $2$ and the derived subgroup of $G$ is not simply connected then it may happen that $[e_\alpha,e_{-\alpha}]=0$ for some $\alpha\in\Phi$).

Let $\Phi_x=\{\alpha\in\Phi\,|\,\,({\rm d}_1\alpha)(x)=0\}.$ The Lie algebra $\g_x$ is spanned over $k$ by $\Lie(T)$ and
by all $e_\alpha$ with $\alpha\in\Phi_x$. It follows that the derived subalgebra $[\g_x,\g_x]$ is spanned over $k$ by all 
$e_\alpha$ with ${\rm d}_1\alpha\ne 0$ and all $[e_\alpha,e_\beta]$ with $\alpha,\beta\in\Phi_x$. Each element $v$ in this spanning set has the property that $v^{[p]}\in kv$. Our earlier remarks in this subsection now show that $[\g_x,\g_x]$ is a restricted Lie subalgebra of $\g$.	
\end{proof}

\subsection{Transitive Lie subalgebras of the Witt algebra}\label{intro1}
We denote by $\OO(m;\underline{1})$ the truncated polynomial ring $k[X_1,\ldots,X_m]/(X_1^p,\ldots, X_m^p)$
in $m$ variables and write $x_i$ for the image of $X_i$ in $\OO(m;\underline{1})$. The local $k$-algebra $\OO(m;\underline{1})$ inherits a standard degree function from the polynomial ring $k[X_1,\ldots, X_m]$. Given $i\in\Z_{\ge 0}$ we denote by $\OO(m;\underline{1})_{(i)}$ the subspace of all truncated polynomials in $\OO(m;\underline{1})$ whose initial term has standard degree $\ge i$. Each
 $\OO(m;\underline{1})_{(i)}$ is an ideal of $\OO(m;\underline{1})$ and
 $\OO(m;\underline{1})_{(i)}=0$ for $i>m(p-1)$. The maximal ideal of $\OO(m;\underline{1})$ is $\OO(m;\underline{1})_{(1)}$.

The derivation algebra of $\OO(m;\underline{1})$, denoted $W(m;\underline{1})$, is called the $m$th {\it Witt--Jacobson Lie algebra}.
This restricted Lie algebra is a free $\OO(m;\underline{1})$-module of rank $m$ generated by the partial derivatives $\partial_1,\ldots, \partial_m$ with respect to $x_1,\ldots, x_m$.
The subspaces $W(m;\underline{1})_{(i)}:=\sum_{i=1}^m\,\OO(m;\underline{1})_{(i+1)}\partial_i$ with $-1\le i\le m(p-1)-1$ induce a decreasing Lie algebra filtration
$$W(m;\underline{1})=W(m;\underline{1})_{(-1)}\supset W(m;\underline{1})_{(0)}\supset\cdots
\supset W(m;\underline{1})_{(m(p-1)-1)}\supset 0$$ of $W(m;\underline{1})$ which is called
{\it standard}.
The Lie subalgebra $W(m;\underline{1})_{(0)}$ is often referred to as {\it the standard maximal subalgebra} of $W(m;\underline{1})$.
This is due to the fact that for $p>3$ it
can be characterised as the unique subalgebra of minimal codimension in $W(m;\underline{1})$.
Because of that all members of the standard filtration of $W(m;\underline{1})$ are invariant under the action of the automorphism group of $W(m;\underline{1})$.

A restricted Lie subalgebra $\mathcal{D}$ of
$W(m;\underline{1})$ is called {\it transitive} if it does not preserve any
 proper nonzero ideals of
$\OO(m;\underline{1})$. Given a finite dimensional simple Lie algebra $S$ over $k$ and a restricted transitive Lie subalgebra $\mathcal{D}$ of $W(m;\underline{1})$ we can form a natural semidirect product
$$S(m,\mathcal{D})\,:=\,(S\otimes\OO(m;\underline{1}))\rtimes({\rm Id}_S\otimes \mathcal{D}).$$ It is known (and easy to see) that $S(m,\mathcal{D})$ is a semisimple Lie algebra over $k$ and its semisimple $p$-envelope $S(m,\mathcal{D})_p$  is isomorphic to $(S_p\otimes\OO(m;\underline{1}))\rtimes ({\rm Id}_S\otimes \mathcal{D})$ as restricted Lie algebras, where $S_p$ is the $p$-envelope of $\ad S$ in $\Der(S)$.

 The case $m=1$ will play a special role in what follows and we spell out the above in more detail. The {\it Witt algebra} $W(1;\underline{1})=\Der(\OO(1;\underline{1}))$ has $k$-basis $\{x^i\del\,|\,\, 0\le i\le p-1\}$, and the Lie bracket in $W(1;\underline{1})$ is given by  $[x^i\del,x^j\del]=(j-i)x^{i+j-1}\del$ for all $i,j\le p-1$ (here $x^n=0$ for all $n\ge p$). It is well known that $D^p\in kD$ for all $D\in W(1;\underline{1})$; see \cite{P92}, for example. In conjunction with Jacobson's formula for $[p]$th powers this shows that any Lie subalgebra of $W(1;\underline{1})$ is restricted. It is routine to check that a subalgebra $\mathcal{D}$ of $W(1;\underline{1})$ is transitive if and only if it is not contained in the standard maximal subalgebra
 $W(1;\underline{1})_{(0)}$.

Note that if $p=2$ then the Witt algebra is solvable and if $p=3$ then $W(1;\underline{1})\cong \sl_2(k)$.
But things settle for $p>3$ and an old result of Jacobson says that any automorphism of the Witt algebra $W(1;\underline{1})$ is induced by a unique automorphism of $\OO(1;\underline{1})$;
see \cite[Theorems 9 and 10]{Jac}. Using this fact
it is straightforward to describe the conjugacy classes of transitive Lie subalgebras of the Witt algebra under the action of its automorphism group.

\begin{lemma}\label{tran} If $p>3$ then any transitive Lie subalgebra of
$W(1;\underline{1})$ is conjugate under the action of $\Aut(W(1;\underline{1}))$ to one of the following:
$$(1)\ k\partial;\ \ \ (2)\ k(1+x)\partial;\ \ \ (3)\ k\partial
 \oplus k(x\partial);\ \
\ (4)\ k\partial\oplus k(x\partial)\oplus k(x^2\partial)\,\cong\, \sl_2(k);\ \ \ (5)\ W(1;\underline{1}).
$$
\end{lemma}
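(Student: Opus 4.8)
\emph{Strategy.} The plan rests on two facts recorded above: Jacobson's theorem identifies $\Aut(W(1;\underline{1}))$ with $\Aut(\OO(1;\underline{1}))$ acting through changes of the variable $x$, and $D^{p}\in kD$ for every $D\in W(1;\underline{1})$, so that, after rescaling, every element is either $[p]$-nilpotent or toral. I would (a) classify the elements of $W(1;\underline{1})\setminus W(1;\underline{1})_{(0)}$ up to conjugacy, which disposes of the one-dimensional transitive subalgebras; (b) bound $\dim\mathcal{D}$ by inspecting the associated graded of $\mathcal{D}$ for the standard filtration; (c) lift the graded picture back to $\mathcal{D}$ in the remaining cases.

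\emph{Normal forms for single elements.} Write $D=f\partial$ with $f(0)\ne 0$, so $f$ is a unit of $\OO:=\OO(1;\underline{1})$ and $D$ acts on the $p$-dimensional space $\OO$ with kernel $k\cdot 1$. If $D$ is $[p]$-nilpotent, then as an operator on $\OO$ it is nilpotent with one-dimensional kernel, hence a single Jordan block, so $1\in\im D$; picking $y\in\OO$ with $D(y)=1$ and $y(0)=0$ we get $y'(0)=f(0)^{-1}\ne 0$, so $x\mapsto y$ is an automorphism of $\OO$ in whose coordinate $D=\partial_y$, i.e.\ $D$ is conjugate to $\partial$. If $D$ is not $[p]$-nilpotent, rescale it to be toral. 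Then $C_W(D)=\{g\partial:(g/f)'=0\}=kD$, so $\ad D$ is semisimple on $W(1;\underline{1})$ with spectrum in $\F_p$ and kernel $kD$. An eigenvector $g\partial$ for a nonzero eigenvalue $\mu$ satisfies $fg'-f'g=\mu g$, and evaluating at $x=0$ shows that $g(0)=0$ forces $g'(0)=0$; hence every such eigenvector lies either in $W(1;\underline{1})_{(1)}$ or outside $W(1;\underline{1})_{(0)}$, and since $\dim W(1;\underline{1})_{(1)}=p-2<p-1$ at least one of them, say $e$, lies outside $W(1;\underline{1})_{(0)}$. Now $\langle D,e\rangle$ is a two-dimensional non-abelian subalgebra, hence restricted (being a subalgebra of $W(1;\underline{1})$) and centreless; since $(\ad e)^2=0$ on it, $e^{[p]}$ centralises it, so $e^{[p]}\in\z(\langle D,e\rangle)=0$. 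Thus $e$ is $[p]$-nilpotent and outside $W(1;\underline{1})_{(0)}$, so conjugate to $\partial$ by the previous case; conjugating, $e=\partial$, and $[D,\partial]=\mu\partial$ forces $D=c_0\partial-\mu x\partial$ with $c_0\ne 0$, which a conjugation by $x\mapsto\lambda x$ normalises to a scalar multiple of $(1+x)\partial$. This yields cases $(1)$ and $(2)$ when $\dim\mathcal{D}=1$.

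\emph{Bounding the dimension, and lifting.} Let $\mathcal{D}$ be transitive, i.e.\ $\mathcal{D}\not\subseteq W(1;\underline{1})_{(0)}$. Then the degree $-1$ component of $\mathrm{gr}(\mathcal{D})$ is nonzero, hence equals $W(1;\underline{1})_{(-1)}/W(1;\underline{1})_{(0)}$, so $\mathrm{gr}(\mathcal{D})$ is a graded subalgebra of $\mathrm{gr}(W(1;\underline{1}))\cong W(1;\underline{1})$ containing $\partial$; write $\mathrm{gr}(\mathcal{D})=\mathrm{span}\{x^{i}\partial:i\in S\}$ with $0\in S$. Bracketing with $\partial$ shows $S=\{0,1,\dots,r\}$, and if $3\le r\le p-2$ then $[x^{2}\partial,x^{r}\partial]=(r-2)x^{r+1}\partial$ is a nonzero element of $\mathrm{gr}(\mathcal{D})$ with $r+1\notin S$, a contradiction; hence $\dim\mathcal{D}=|S|\in\{1,2,3,p\}$. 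If $\dim\mathcal{D}=p$ then $\mathcal{D}=W(1;\underline{1})$, case $(5)$. If $\dim\mathcal{D}=2$, then $\mathrm{gr}(\mathcal{D})=\langle\partial,x\partial\rangle$ is non-abelian, so $\mathcal{D}$ is the unique non-abelian two-dimensional Lie algebra; comparison with $\mathrm{gr}(\mathcal{D})$ shows the line $[\mathcal{D},\mathcal{D}]$ meets $W(1;\underline{1})_{(-1)}\setminus W(1;\underline{1})_{(0)}$, and its generator is $[p]$-nilpotent by the centreless-restricted argument, hence conjugate to $\partial$; after conjugating, $[\partial,b]=-\partial$ for the complementary basis vector $b$ forces $\mathcal{D}=\langle\partial,x\partial\rangle$, case $(3)$. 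If $\dim\mathcal{D}=3$, then $\mathrm{gr}(\mathcal{D})=\langle\partial,x\partial,x^{2}\partial\rangle$ is perfect, so $\mathcal{D}$ is a three-dimensional simple Lie algebra, hence $\cong\sl_2(k)$ since $p>3$; as $\mathcal{D}\cap W(1;\underline{1})_{(0)}$ is a Borel and the nilpotent cone of $\sl_2(k)$ is not contained in a Borel, $\mathcal{D}$ contains a $[p]$-nilpotent element of order $-1$, conjugate to $\partial$; choosing an $\sl_2$-triple $(\partial,h,f)$ in $\mathcal{D}$ and solving $[h,\partial]=2\partial$ and $[\partial,f]=h$ pins $h\in k\partial\oplus kx\partial$ and $f\in k\partial\oplus kx\partial\oplus kx^{2}\partial$, so $\mathcal{D}=\langle\partial,x\partial,x^{2}\partial\rangle$, case $(4)$.

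\emph{Main obstacle.} The delicate step is the normal form for a non-$[p]$-nilpotent $D\notin W(1;\underline{1})_{(0)}$: it requires combining the $\ad D$-eigenspace decomposition with the standard filtration (the counting inequality $p-2<p-1$) together with the ``centreless two-dimensional restricted algebra'' device to produce a $[p]$-nilpotent element outside $W(1;\underline{1})_{(0)}$. Once such an element has been straightened to $\partial$ and forced into $\mathcal{D}$, the rest is bookkeeping with the standard grading and with the structure of Lie algebras of dimension at most three.
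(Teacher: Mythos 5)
Your proof is correct, and while it shares the paper's broad outline (normal forms for a single element of $W(1;\underline{1})\setminus W(1;\underline{1})_{(0)}$, then an analysis of higher-dimensional $\mathcal{D}$), both halves are executed differently. For the one-dimensional case the paper simply cites \cite[Lemma~4]{P92} and \cite[\S~7]{Str04}; you reprove the normal forms from scratch — the nilpotent case via the single-Jordan-block argument on $\OO(1;\underline{1})$ (implicitly using $\ker D\subseteq \im D$ for a full Jordan block to solve $D(y)=1$), and the toral case via the count $p-1>\dim W(1;\underline{1})_{(1)}=p-2$ producing a $[p]$-nilpotent $\ad D$-eigenvector outside $W(1;\underline{1})_{(0)}$. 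For $\dim\mathcal{D}\ge 2$ the paper forces first $\partial$ and then $x\partial$ into $\mathcal{D}$ itself (using the cyclic subgroup $\Sigma$ of $\Aut(W(1;\underline{1}))$ to pass from $(1+x)\partial$ to $\partial$) and then diagonalises $\mathcal{D}$ under $\ad(x\partial)$, climbing up the monomials; you instead pass to $\mathrm{gr}(\mathcal{D})$ for the standard filtration, show its support is an initial segment $\{0,\dots,r\}$ with $r\in\{0,1,2,p-1\}$, and lift by identifying $\mathcal{D}$ abstractly (the non-abelian $2$-dimensional algebra, $\sl_2(k)$, or all of $W(1;\underline{1})$) before pinning down the embedding with a conjugated copy of $\partial$. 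Your route is more self-contained on the normal forms, and the $\mathrm{gr}$ argument cleanly explains why only dimensions $1,2,3,p$ occur; the price is one extra classical input in the three-dimensional case, namely that a perfect (hence simple) three-dimensional Lie algebra over an algebraically closed field of characteristic $p>3$ is isomorphic to $\sl_2(k)$, which the paper's direct diagonalisation avoids. Both arguments are sound.
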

\begin{proof}
Let $\mathcal{G}={\rm Aut}(W(1;\underline{1}))$ and let $\mathcal{D}$ be a transitive Lie subalgebra of $W(1;\underline{1})$. Then there exists
$D\in\mathcal{D}$ such that $D\not\in W(1;\underline{1})_{(0)}$. Since $D^p\in kD$ we may assume that either
$D^p=0$ or $D^p=D$. If $D^p=0$ (resp. $D^p=D$) then there is $g\in\mathcal{G}$ such that $g(D)=\partial$
(resp. $g(D)=(1+x)\partial$); see \cite[Lemma~4]{P92} and \cite[\S~7]{Str04}. This proves the lemma in the case where $\dim \mathcal{D}=1$.

From now on we may assume that $\mathcal{D}$ has dimension $\ge 2$
and intersects non-trivially with the set
$\{\partial,\,(1+x)\partial\}$. We first suppose that $(1+x)\partial\in\mathcal{D}$. The subspace $k(1+x)\partial$ is a self-centralising torus of $W(1;\underline{1})$  and the group
$\mathcal{G}$ contains a cyclic subgroup $\Sigma$ of order $p-1$ which permutes transitively the set $\F_p^\times (1+x)\partial$ of all nonzero toral elements of $k(1+x)\partial$; see \cite[\S~1]{P92}, for example. As $\dim\mathcal{D}\ge 2$
and $\ad (1+x)\partial$ is diagonalisable, it must be that $(1+x)^i\partial\in \mathcal{D}$ for some $i\in\{0,2,\ldots, p-1\}$.
Since $\Sigma$ permutes transitively the set of all eigenspaces
for $\ad (1+x)\partial$ corresponding to eigenvalues in $\F_p^\times$, there is $\sigma\in\Sigma$ such that $\sigma(\mathcal{D})$ contains both
$(1+x)\partial$ and $\partial$. So we may assume without loss that
 $\partial\in \mathcal{D}$.

Since $\dim\mathcal{D}\ge 2$  there is
$f(x)\partial\in\mathcal{D}$ such that $f(x)=a_rx^r+\cdots a_1x$, where
$a_i\in k$ and $a_r\ne 0$ for some $1\le r\le p-1$. Then $(\ad \partial)^{r-1}(f(x)\partial)\in \mathcal{D}$ yielding $x\partial \in\mathcal{D}$. If $\dim\mathcal{D}=2$ we arrive at case~(3). If $\dim\mathcal{D}\ge 3$
then $\mathcal{D}\cap W(1;\underline{1})_{(1)}$ contains an eigenvector for $\ad (x\partial)$.
Hence $x^r\partial\in\mathcal{D}$ for some $r\in\{2,\ldots, p-1\}$. As $\partial\in\mathcal{D}$, this yields that $x^i\partial\in\mathcal{D}$ for $0\le i\le r$. If $\dim\mathcal{D}=3$ then $r=2$ and we are in case (4).
If $\dim\mathcal{D}\ge 4$ then the above shows that $\mathcal{D}$ contains $x^3\partial$. Since the Lie algebra $W(1;\underline{1})$ is generated by
$\partial$ and $x^3\partial$, we get $\mathcal{D}=W(1;\underline{1})$ completing the proof.
\end{proof}

\subsection{A property of restricted $\sl_2$-modules}\label{3.3}
Let $L$ be a Lie algebra over $k$ and let $V$ be a finite dimensional $L$-module. Given $x\in L$ we set $V_x:=\{v\in V\,|\,\, x. v=0\}.$

In what follows we shall require very detailed information on certain $\sl_2$-triples $\{e, h, f\}$ of $\g$ such that $e^{[p]}=f^{[p]}=0$ and $h^{[p]}=h$. In particular, it will be very useful for us to know that $\dim \g_{h}\le \,\dim \g_{e}$. Since the $k$-span of $\{e,h,f\}$
is a restricted Lie subalgebra of
$\g$ isomorphic to $\sl_2(k)$, we may regard $\g$ as a restricted $\sl_2(k)$-module.

\begin{lemma}\label{sl2}
	Suppose ${\rm char}(k)>2$ and let $V$ be a finite dimensional restricted module over the restricted Lie algebra $\s=\sl_2(k)$.
	If $x$ is a nonzero semisimple element of $\s$ then  $\dim V_x\le \dim V_y$ for any $y\in\s$.
\end{lemma}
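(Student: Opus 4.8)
The plan is to reduce everything to a statement about weights of the restricted $\s=\sl_2(k)$-module $V$ and then to compare zero-weight spaces. Since $p>2$ and $x$ is a nonzero semisimple element of $\sl_2(k)$, all nonzero semisimple elements of $\sl_2(k)$ are conjugate up to scalars under $\Aut(\sl_2(k))=\mathrm{PGL}_2(k)$ (equivalently, under the adjoint group), and scaling does not change the kernel $V_x$; so without loss of generality we may take $x=h$, the standard toral element with $h^{[p]}=h$. Because $V$ is a \emph{restricted} module, $h$ acts on $V$ as a semisimple operator all of whose eigenvalues lie in $\F_p$; thus $V=\bigoplus_{i\in\F_p}V^i$ with $h$ acting on $V^i$ as the scalar $i$, and $V_x=V_h=V^0$ is exactly the zero-weight space.

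Next I would dispose of the case $y$ semisimple and the case $y$ nilpotent separately (the general $y$ is handled by its Jordan decomposition inside the restricted algebra, but in fact one only needs the two extreme cases since $\dim V_y$ is smallest when $y$ is regular, and a non-regular $y$ has $\dim V_y\ge\dim V_{y'}$ for a regular $y'$ of the same Jordan type — so it suffices to check $\dim V^0\le\dim V_n$ for a single nonzero nilpotent $n$). If $y$ is semisimple and nonzero then $V_y$ has the same dimension as $V_x$ by the conjugacy remark above, so equality holds. If $y=0$ then $V_y=V$ and there is nothing to prove. The remaining, and main, case is $y=n$ a nonzero nilpotent element, say the standard $e$ with $e^{[p]}=0$.

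For the nilpotent case the key input is the structure theory of restricted $\sl_2$-modules in characteristic $p>2$: every such $V$ is a direct sum of indecomposables, each of which is either a simple module $L(\lambda)$ with $0\le\lambda\le p-1$ (of dimension $\lambda+1$, a ``Weyl module'' that happens to be irreducible and restricted) or a projective indecomposable module of dimension $2p$. It therefore suffices to verify the inequality $\dim V^0\le\dim V_e$ on each indecomposable summand. For a simple module $L(\lambda)$ with $0\le\lambda\le p-1$: the $h$-weights are $\lambda,\lambda-2,\dots,-\lambda$, so $\dim L(\lambda)^0$ is $1$ if $\lambda$ is even and $0$ if $\lambda$ is odd, while $e$ acts as a single Jordan block of size $\lambda+1$, so $\dim L(\lambda)_e=1$; hence $\dim L(\lambda)^0\le 1=\dim L(\lambda)_e$. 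For a $2p$-dimensional projective indecomposable $Q$: its $h$-weights are known and the zero-weight multiplicity is $2$, while $e$ acts with Jordan type $(p,p)$ so $\dim Q_e=2$; again equality. Summing over summands gives $\dim V^0\le\dim V_e$, i.e. $\dim V_x\le\dim V_y$, as desired.

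The main obstacle is getting the module-theoretic facts cleanly: one needs the classification of indecomposable restricted $\sl_2$-modules (simples $L(0),\dots,L(p-1)$ and the $2p$-dimensional projectives) together with the precise zero-weight multiplicities and the Jordan type of a regular nilpotent on each. These are standard — they follow from the fact that the restricted enveloping algebra $u(\sl_2)$ has the two-sided ideal decomposition into blocks, with $L(p-1)$ projective on its own and the other simples paired into $2p$-dimensional projectives — but stating and citing them correctly (e.g. via Strade's book or the classical references) is the part that requires care. Once those facts are in hand the comparison on indecomposables is a one-line check in each case, and the reduction from general $y$ to the nilpotent case via Jordan decomposition and the conjugacy of nonzero semisimple elements is routine.
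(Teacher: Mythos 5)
Your reduction to $x=h$ and $y=e$ is fine (in $\sl_2$ every element is semisimple or nilpotent, and all nonzero semisimple, resp.\ nilpotent, elements are conjugate up to scalars, with scaling not affecting kernels), and the two verifications you carry out — on the simples $L(\lambda)$, $0\le\lambda\le p-1$, and on the $2p$-dimensional projective indecomposables — are correct. The gap is in the structural input: it is \emph{false} that every finite dimensional restricted $\sl_2$-module is a direct sum of simples and $2p$-dimensional projectives. The non-Steinberg blocks of $u(\sl_2)$ are not of that shape, and there are many further indecomposables. A concrete counterexample to your classification is the Weyl module $V(p)$ (dimension $p+1$): it is a restricted $\sl_2$-module (being the differential of a rational $\SL_2$-representation), it has simple socle $L(p-2)$ and hence is indecomposable, and it is neither simple nor projective. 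More generally the Weyl modules $V(m)$ and their duals with $p\nmid(m+1)$, as well as the $lp$-dimensional maximal submodules of $V(lp+r)$ with composition factors $V(r)$ and $V(p-2-r)$ each occurring $l$ times, are indecomposable restricted modules missing from your list. Your case-by-case check therefore does not cover all summands, so the proof as written is incomplete; this classification is exactly the delicate point, and the paper handles it by citing Premet's description of the indecomposable restricted $\sl_2$-modules in \cite{P91}.

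It is worth noting how the paper avoids checking the liftable indecomposables one by one: setting $d=\min_s\dim V_s$, the locus $U=\{s\in\s\mid \dim V_s=d\}$ is nonempty, Zariski open and stable under $k^\times$, and when the representation is the differential of a rational $\SL_2$-representation it is also $(\Ad\,\mathcal{S})$-stable; since the nonzero semisimple elements form a dense subset which is a single orbit under $\Ad\,\mathcal{S}$ and scaling, $U$ must contain all of them, which gives the inequality at once for every module lifting to $\SL_2$ (in particular for all $V(m)$, $V(m)^*$ and the projectives). Only the non-liftable indecomposables (the maximal submodules of $V(lp+r)$) then require a direct computation. If you want to salvage your summand-by-summand approach you would either need to adopt this semicontinuity argument or extend your explicit check to the full list of indecomposables from \cite{P91}; as it happens the inequality does hold on each of them (e.g.\ $\dim V(p)_h=\dim V(p)_e=2$), but that has to be verified.
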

\begin{proof} Let $\mathcal{S}=\SL_2(k)$ and let
	$V(m)$ be the Weyl module for $\mathcal{S}$ of highest weight $m\in\Z_{\ge 0}$.
	The Lie algebra $\s=\Lie(\mathcal{S})$ acts on $V(m)$ via the differential  at identity of the rational representation $\mathcal{S}\to\,\GL(V(m))$.
	It is well known that any irreducible restricted $\s$-module is isomorphic to one of the $V(m)$'s with $m\in\{0,1,\ldots, p-1\}$.	
	
	In proving this lemma we may assume that $V$ is an indecomposable
	$\s$-module.	Let $\psi\colon\,\s\to\,\gl(V)$ denote the corresponding representation of $\s$. All such representations are classified in \cite{P91}.
	To be more precise, it is known that either there is a rational representation $\rho\colon\,\mathcal{S}\to\,\GL(V)$ such that $\psi={\rm d}_e\rho$ or $V$ is maximal $\s$-submodule of $V(lp+r)$ for some $l\in\Z_{\ge 1}$ and $r\in\{0,1,\ldots, p-2\}$. In the latter case $\dim V=lp$ and $V$ has two composition factors, $V(r)$ and $V(p-2-r)$, both of which appear $l$ times in any composition series of $V$. If $\psi={\rm d}_e\rho$ then either $V$ is isomorphic to one of $V(m)$ or $V(m)^*$ with $p\nmid (m+1)$ or
	$V$ is a projective indecomposable module over the restricted enveloping algebra of $\s$.
	
	Let $d=\min_{s\in\s}\,\dim V_s$. Looking at the minors of matrices of the endomorphisms $\psi(s)$ with $s\in\s$ one observes that the set $$U:=\{s\in\s\,|\,\,\dim V_s=d\}$$ is nonempty, Zariski open in $\s$, and has the property that $k^\times U=U$.
	If $\psi={\rm d}_e\rho$ then $U$ is also $(\rm Ad\,\mathcal{S})$-stable.
	If $\{e,h,f\}$ is a standard basis of $\s$ then any nonzero semisimple element of $\s$ is $(\rm Ad\,\mathcal{S})$-conjugate to a nonzero multiple of
	$h$.  So if $\psi={\rm d}_e\rho$ then $U$ contains all nonzero semisimple elements of $\s$. This proves the lemma in the present case.
	
	Now suppose $V$ is a maximal submodule of $V(m)$, where $m=lp+r$ and $0\le r\le p-2$. If $r$ is even and $x$ is $(\Ad\,\mathcal{S})$-conjugate to a nonzero multiple of $H$ then $\dim V(r)_x=1$ and if
	$r$ is odd then $\dim V(r)_{x}=0$. Since $x^{[p]}=\lambda x$ for some
	$\lambda\in k^\times$, the endomorphism $\psi(x)$ is diagonalisable.
	In view of our remarks earlier in the proof this yields that $\dim V_x=l$
	for any nonzero semisimple element $x\in\s$. Consequently, $d=l$ completing the proof.
\end{proof}
\begin{remark}\label{R0} Let $\s$, $V$ and $\psi$ be as above.
 It follows from the above-mentioned description of finite dimensional indecomposable restricted $\s$-modules that 
$V$ has a reducible indecomposable direct summand only if $\psi(e)^{p-1}\ne 0$ or $\psi(f)^{p-1}\ne 0$.
\end{remark}
\subsection{Standard $\sl_2$-triples}\label{3.4} In this subsection we review some results on $\sl_2$-triples in exceptional Lie algebras over algebraically closed fields of good characteristics. More information on such $\sl_2$-triples can be found in \cite{HSMax} and \cite{ST} where the notation is slightly different.

It is well known that the nilpotent cone $\N(\g)$ coincides with the set of all $(\Ad\,G)$-unstable vectors of $\g$. Therefore, any nonzero $e\in\N(\g)$
admits a cocharacter $\tau\in X_*(G)$ optimal in the sense of the Kempf--Rousseau theory. The adjoint action of the $1$-dimensional torus
$\tau(k^\times)$ gives rise to a $\Z$-grading $\g=\bigoplus_{\,i\in\Z}\,\g(\tau,i)$ where the subspace
$\g(\tau,i)$ consists of all $x\in\g$ such that $(\Ad\,\tau(\lambda))(x)=\lambda^ix$ for all $\lambda\in k^\times$.
The optimal parabolic subgroup $P(e)\subset G$ of $e$ is independent of the choice of $\tau$ and $\Lie(P(e))=\bigoplus_{\,i\ge 0}\,\g(\tau,i)$.
Since the Killing form $\kappa$ of $\g$ is non-degenerate, we can choose an optimal cocharacter $\tau$ in such a way that $e\in\g(\tau,2)$ and $\g_e\subseteq \Lie(P(e))$; see \cite[Theorem~A]{P03}.
Such optimal cocharacters of $e$ form a single conjugacy class under the adjoint action of the centraliser $G_e\subset P(e)$. 
Furthermore, it follows from \cite[Proposition~2.5]{P03} that they coincide with the so-called associated cocharacters introduced by Jantzen in \cite[5.3]{Jan04}.
The Lie algebra $\Lie(\tau(k^\times))$ is a $1$-dimensional torus of $\g$ spanned by the element $h_\tau:=({\rm d}_e\tau)(1)$ which has the property that $[h_\tau,e]=2e$. The centraliser $C_G(\tau)$ of $\tau(k^\times)$ is a Levi subalgebra of $G$ and $\Lie(C_G(\tau))=\g(\tau,0)$.

Put $\g_e(i)=\g_e\cap \g(\tau,i)$. By \cite[Theorem~A]{P03},
the group $C_e:=G_e\cap Z_G(\tau)$ is reductive and $\mathfrak{c}_e:=\Lie(C_e)=\g_e(0)$.
Furthermore, $G_e=C_e\cdot R_u(G_e)$.
The adjoint $G$-orbit $\OO(e)$ of $e$ is uniquely determined by its weighted Dynkin diagram $\Delta=\Delta(e)$ which depicts the weights of $\tau(k^\times)$ on a carefully selected set of simple root vectors of $\g$.
These diagrams are the same as in the characteristic zero case and they can be found in \cite[pp.~401--407]{Car} along with the Dynkin labels of the corresponding nilpotent $G$-orbits.

Let $T_e$ be a maximal torus of $C_e$ and $L=C_G(T_e)$, a
Levi subgroup of $G$. The Lie algebra $\l'=\Lie(\D L_e)$ is $\tau(k^\times)$-stable and contains $h_\tau$. Moreover, $e$ is distinguished in $\l'$, that is $e\in\l'(\tau,2)$ and
$\dim \l'(\tau, 0)=\dim \l'(\tau,2)$; see \cite[2.3--2.7]{P03} for detail.
Since $\g_e\subseteq \Lie(P(e))$, and $\dim \l'(\tau, -2)=\dim \l'(\tau,2)$,
the map $\ad\,e\colon\, \l'(\tau,-2)\to\,\l'(\tau, 0)$ is bijective. As $h_\tau\in\l'(\tau,0)$, there is a unique element $f\in\l'(\tau,-2)$ such that $[e,f]=h_\tau$. By construction, $\{e,h_\tau,f\}$ is an $\sl_2$-triple in $\g$. Since $f^{[p]}$ commutes with $e$ for $p>2$, it lies in $\g_e(\tau,-2p)$. Since $\tau$ has nonnegative weights on $\g_e$, this yields
$f^{[p]}=0$.
\begin{defn}
	An $\sl_2$-triple $\{e',h',f'\}$ of $\g$ is called {\it standard} if it is
	$(\Ad\,G)$-conjugate to one of the $\sl_2$-triples $\{e,h_\tau, f\}$ described above.
\end{defn}
If $\{e,h,f\}$ is a standard $\sl_2$-triple, then necessarily $e\in\N(\g)$ and $f^{[p]}=0$.
However, it may happen in some small characteristics that $e^{[p]}\ne 0$. In particular, if $e^{[p]}\ne 0$ then $e$ and $f$ belong to different nilpotent $G$-orbits. On the other hand, if $e^{[p]}=0$ then there exists a connected subgroup $\mathcal{S}$ of type ${\rm A}_1$ in $G$ such that $\Lie(\mathcal{S})=ke\oplus kh\oplus kf$; see \cite{McN04}. In that case $e$ and $f$ are $(\Ad\,G)$-conjugate.

Our earlier remarks in this subsection show that any nilpotent element of $\g$ can be included into a standard $\sl_2$-triple.
Now suppose $\{e,h,f\}$ is an arbitrary $\sl_2$-triple in $\g$ with $e\in\N(\g)$ and $h$ semisimple. Let $\tau$ be an optimal cocharacter for $e$ such that $e\in\g(\tau,2)$. All eigenvalues of the toral element $h_\tau$ belong to $\F_p$ and we write $\g(h_\tau,\bar{i})$ for the eigenspace of $\ad\,h_\tau$ corresponding to eigenvalue $\bar{i}\in\F_p$. It is straightforward to see that
\begin{equation}\label{h-tau}
\g(h_\tau,\bar i)\,=\,\textstyle{\bigoplus}_{\,j\in\Z}\,\g(\tau,i+jp).
\end{equation}
Since $e$ and $h-h_\tau$ commute, $h$ is a semisimple element element of the restricted Lie algebra
$kh_\tau\oplus \g_e$. Since $\g_e=\Lie(G_e)$ the latter coincides with the Lie algebra of the normaliser $N_G(k e)=\tau(k^\times)\cdot G_e$. As $\tau(k^\times)\cdot T_e$ is a maximal torus of $N_G(ke)$ contained in $\tau(k^\times)\cdot C_e$, it follows from \cite[11.8]{Borel} that
$h$ is conjugate under the adjoint action of $N_G(ke)$ to an element of $kh_\tau\oplus \g_e(0)$.

So assume from now that $h\in kh_\tau+\g_e(0)$. Then $h-h_\tau\in \g_e(0)\cap [e,\g]$. If $h\ne h_\tau$ then the linear map $(\ad\,h)^2\colon\,\g(\tau,-2)\to\,\g(\tau,2)$ is not injective. The computations in \cite{P95} then imply that the orbit $\OO(e)$ has Dynkin label
${\rm A}_{p-1}{\rm A}_r$ for some $r\ge 0$. In other words, $e$ is a regular nilpotent element of $\Lie(L)$ where $L$ is a Levi subgroup of type ${\rm A}_{p-1}{\rm A}_r$ in $G$.
\begin{remark}\label{nonstandard}
	The preceding remark implies that if $\g$ contains a non-standard $\sl_2$-triple $\{e,h,f\}$ with $e\in\N(\g)$ and $h$ semisimple, then
	$e\in\OO({\rm A}_{p-1}{\rm A}_r)$ for some $r\ge 0$. As a consequence, $G$ is a group of type ${\rm E}$ and $p\in\{5,7\}$.
\end{remark}
\subsection{A remark on exponentiation}\label{3.8} Let $K$ be an algebraically closed field of characteristic $0$. In this subsection we assume that $G$ is a simple, simply connected algebraic group over an algebraically closed field $k$ of good characteristic $p>0$ and we write
$G_K$ for the  simple, simply connected algebraic group over $K$ of the same type as $G$. Both groups are obtained by base change from a Chevalley group scheme $G_\Z$.
The Lie algebra
$\g$ of $G$ is obtained by base change from a minimal admissible lattice
$\g_\Z$ in the simple Lie algebra $\g_K:=\Lie(G_K)$.
For any $p$-power $q\in\Z$ the field $k$ contains a unique copy of the finite field $\F_q$ and the finite Lie algebra $\g_{\F_q}:=\g_\Z\otimes_\Z \F_q$ is an $\F_q$-form
of $\g$ closed under taking $[p]$th powers in $\g$. The {\it restricted nullcone}
$\N_p(\g)$ consists of all $x\in\g$ with $x^{[p]}=0$. This is a Zariski closed, conical subset of the nilpotent variety $\N(\g)$ and it arises naturally when one studies exponentiation. Indeed, if $U$ is a one-parameter unipotent subgroup of $G$ then $\Lie(U)$ is a $1$-dimensional
$[p]$-nilpotent restricted subalgebra of $\g$ and hence lies in $\N_p(\g)$.

It is well known that each nilpotent $(\Ad\,G)$-orbit $\OO$ has a  representative $e\in \g_{\F_p}$ such that $e=\tilde{e}\otimes_\Z 1$
for some nilpotent element $\tilde{e}$ of $\g_K$ contained in $\g_\Z$. By \cite[2.6]{P03}, one can choose $\tilde{e}$  in such a way that the unstable vectors $e\in\g$ and $\tilde{e}\in\g_K$ admit optimal cocharacters obtained by base-changing a cocharacter $\tau\in X_*(G_\Z)$. Moreover, $\tau(\mathbb{G}_{m,\Z})$ is contained in a split maximal torus $T_\Z$ of $G_\Z$ and $\dim_k\OO=\dim_K\,(\Ad\,G_K)\,\tilde{e}$.
Various  properties of the cocharacters $\tau$ have already been discussed in \S~\ref{3.4} and we are going to use the notation introduced there for both $e$ and $\tilde{e}$. In particular, we
write $\g_{\tilde{e},K}$ for the centraliser of $\tilde{e}$
in $\g_K$ and $\g_{\tilde{e},K}(i)$ for the intersection
of  $\g_{\tilde{e},K}$ with the $i$-weight space $\g_{K}(\tau, i)$ of $\Ad\,\tau(K^\times)$.
More generally, given a commutative ring $A$ with $1$, we set $\g_A:=\g_\Z\otimes_\Z A$. If $A\subseteq k$, we put
$\g_{e,A}:=\,\g_{e}\cap\g_{A}$ and
$\g_{e,A}(i):=\,
\g(\tau,i)\cap \g_{e,A}$. If $A\subseteq K$, we define
$\g_{\tilde{e},A}$ and $\g_{\tilde{e},A}(i)$ in a similar fashion.

Our next result shows that by modifying the exponentiation techniques of \cite[\S~1]{Tes} one can construct one-parameter unipotent subgroups of $G_e$ whose Lie algebras are spanned by prescribed elements of $\N_p(\g)$ contained in the nilradical of $\g_{e}$. These subgroups respect filtrations associated with optimal cocharacters of $e$.
\begin{prop}\label{expo}
	Let $e$ be a nonzero nilpotent element of $\g$ and let $\tau$ be an optimal cocharacter for $e$ such that $e\in\g(\tau,2)$.  If $d$ is a positive integer, then for any nonzero $x\in
	\bigoplus_{i\ge d}\,\g_{e}(i)$ with $x^{[p]}=0$  there exists a collection of  endomorphisms $\{X^{(i)}\,|\,\,0\le i\le p^2-1\}$ of $\g$
	and a one-parameter unipotent subgroup $\mathcal{U}_x=\{x(t)\,|\,\,t\in k\}$
	of $G_e$ with Lie algebra $kx$
	such that $X^{(i)}=\frac{1}{i!}(\ad x)^i$ for $0\le i\le p-1$
	and
	$$\big(\Ad\,x(t)\big)(v)=\textstyle{\sum}_{i=0}^{p^2-1}\,t^iX^{(i)}(v)\qquad
	\quad\big(\forall\,v\in\g\big).$$ Furthermore, each endomorphism $X^{(i)}$ can be expressed as a sum of weight vectors of weight $\ge d i$ with respect to the natural action of the torus $\tau(k^\times)$ on $\End(\g)$.
\end{prop}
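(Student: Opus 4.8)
The plan is to refine the characteristic‑zero exponentiation technique of \cite[\S1]{Tes}: lift $e$, $\tau$ and $x$ to a mixed‑characteristic situation, exponentiate $\tilde x$ over the generic fibre, and descend to $k$, keeping careful track of the $p$‑adic valuations of the divided powers $\tfrac1{i!}(\ad\tilde x)^i$ and of their $\tau$‑weights.

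First I would set up as in \S\ref{3.8}. Let $A$ be a complete discrete valuation ring with residue field $k$, fraction field $K$ of characteristic $0$ and maximal ideal $(\pi)$; form $G_A$ and $\g_A$ by base change from the Chevalley data, so that reduction modulo $\pi$ recovers $G$ and $\g$, and take $\tilde e\in\g_\Z$ and $\tau\in X_*(G_\Z)$ as provided by \S\ref{3.8}. Since $p$ is good, $C_{G_A}(\tilde e)$ is smooth over $A$, so $\g_{\tilde e,A}:=\Lie\,C_{G_A}(\tilde e)$ is a direct summand of $\g_A$ which carries the $\tau$‑grading $\g_{\tilde e,A}=\bigoplus_i\g_{\tilde e,A}(i)$ and reduces mod $\pi$ to $\g_e=\bigoplus_i\g_e(i)$. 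One then lifts $x$ to some $\tilde x\in\bigoplus_{i\ge d}\g_{\tilde e,A}(i)$. The delicate point — where the method of \cite[\S1]{Tes} must be adapted — is that $\tilde x$ should be chosen so that $(\ad\tilde x)^{p^2}=0$; this can be arranged because the hypothesis $x^{[p]}=0$ places $x$ in the restricted nullcone $\N_p(\g)$, and the nilpotent orbits meeting $\N_p(\g)$ have small height (bounded by $2p-2\le p^2-1$), so that one may take $\tilde x$ with the same weighted Dynkin diagram as $x$, the orbit–lifting construction of \S\ref{3.8} being compatible with the flat $A$‑model of $\bigoplus_{i\ge d}\g_{\tilde e,A}(i)$.

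Now the bookkeeping. From $x^{[p]}=0$ we get $(\ad x)^p=\ad(x^{[p]})=0$, hence $(\ad\tilde x)^p\in\pi\End(\g_A)$ and, iterating, $(\ad\tilde x)^i\in\pi^{\lfloor i/p\rfloor}\End(\g_A)$ for every $i$. Writing $v_p$ for the $p$‑adic valuation, Legendre's formula gives $v_p(i!)=\lfloor i/p\rfloor$ whenever $i<p^2$, so $\tfrac1{i!}(\ad\tilde x)^i\in\End(\g_A)$ for $0\le i\le p^2-1$; and $\tfrac1{i!}(\ad\tilde x)^i=0$ for $i\ge p^2$ by the choice of $\tilde x$. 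Moreover $\tilde x\in\bigoplus_{i\ge d}\g_{\tilde e,A}(i)$ means $\ad\tilde x$ raises the $\tau$‑weight by at least $d$, so $(\ad\tilde x)^i$, and with it $\tfrac1{i!}(\ad\tilde x)^i$, is a sum of weight vectors of weight $\ge di$ for the action of $\tau(k^\times)$ on $\End(\g_A)$.

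Finally, exponentiate and descend. Over $K$, $\tilde\phi(t):=\exp(t\tilde x)$ is a homomorphism $\Ga_K\to C_{G_K}(\tilde e)$ (it fixes $\tilde e$ under $\Ad$, as $[\tilde x,\tilde e]=0$), with $\Ad\,\tilde\phi(t)=\exp(t\,\ad\tilde x)=\sum_{i=0}^{p^2-1}t^i\,\tfrac1{i!}(\ad\tilde x)^i$ — a polynomial in $t$ with coefficients in $\End(\g_A)$. Hence $\Ad\circ\tilde\phi$ extends to a morphism $\Ga_A\to\GL(\g_A)$ landing in the closed subgroup scheme $(G_{\rm ad})_A$; since $p$ is good, $Z(G)$ is étale of order prime to $p$, so every $Z(G)$‑torsor over $\Ga_A$ is trivial and $\tilde\phi$ lifts to a homomorphism $\tilde\phi_A\colon\Ga_A\to G_A$, necessarily factoring through the smooth closed subscheme $C_{G_A}(\tilde e)$. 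Reducing mod $\pi$ yields a homomorphism $x(\cdot)\colon\Ga_k\to C_G(e)=G_e$ whose differential at $1$ is $x$, so $\mathcal U_x:=x(\Ga)$ is a one‑parameter unipotent subgroup of $G_e$ with Lie algebra $kx$. Taking $X^{(i)}$ to be the reduction mod $\pi$ of $\tfrac1{i!}(\ad\tilde x)^i$ gives $\Ad\,x(t)=\sum_{i=0}^{p^2-1}t^iX^{(i)}$; for $0\le i\le p-1$ the scalar $\tfrac1{i!}$ is a unit of $A$, so $X^{(i)}=\tfrac1{i!}(\ad x)^i$; and each $X^{(i)}$ is a sum of weight vectors of weight $\ge di$ for $\tau(k^\times)$. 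The main obstacle is the construction of $\tilde x$ in the second paragraph — producing a lift that is simultaneously in the prescribed graded subspace of $\g_{\tilde e}$ over $A$ and has $\ad\tilde x$ of nilpotency order below $p^2$, which is exactly where the hypothesis $x^{[p]}=0$ and the bound $p^2-1$ enter; the remaining exponentiation and reduction are routine, if careful.
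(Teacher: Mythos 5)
Your overall strategy is the same as the paper's: lift $x$ to $\tilde{x}\in\bigoplus_{i\ge d}\g_{\tilde{e},A}(i)$ over a mixed-characteristic base, observe that $(\ad\tilde{x})^p(\g_A)\subseteq p\,\g_A$ so that the divided powers $\frac{1}{i!}(\ad\tilde{x})^i$ preserve $\g_A$ for $i\le p^2-1$, exponentiate over the fraction field and reduce. (Working directly over a Cohen ring of $k$ rather than reducing to finite subfields as the paper does is a legitimate simplification, provided you note that the base must be absolutely unramified so that $v(p)=1$ matches Legendre's formula.) The difficulty is concentrated exactly where you say it is, but your treatment of that point does not work.

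The gap is the claim that $\tilde{x}$ can be chosen with $(\ad\tilde{x})^{p^2}=0$ "with the same weighted Dynkin diagram as $x$''. The lift is constrained to lie in the fixed graded subspace $\bigoplus_{i\ge d}\g_{\tilde{e},A}(i)$; its set of lifts there is a coset of $p\cdot\bigoplus_{i\ge d}\g_{\tilde{e},A}(i)$, and semicontinuity of centraliser dimension only gives $\dim\g_{K,\tilde{x}_K}\le\dim\g_x$, i.e.\ the generic fibre $\tilde{x}_K$ may lie in a \emph{strictly larger} nilpotent orbit than the characteristic-zero orbit matching the weighted Dynkin diagram of $x$, and hence may have larger $\ad$-nilpotency order than the bound $2p-1$ you extract from $x\in\N_p(\g)$. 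You give no argument that the required orbit can be hit by some lift inside this subspace, and the appeal to "compatibility with the flat $A$-model'' of \S\ref{3.8} is not available: that section lifts the orbit representative $\tilde{e}$ itself, not elements of $\g_{\tilde{e}}$ confined to prescribed graded pieces. This is precisely the point where the paper's proof becomes non-formal: it takes an \emph{arbitrary} lift and verifies $(\ad\tilde{x})^{p^2-1}=0$ by consulting the Jordan-block tables of \cite{Ste}, which show failure is possible only when $G$ is of type ${\rm E}_8$ with $p=7$ and $e$ regular, or of type ${\rm E}_7$ with $p=5$ and $e$ regular or subregular; in those configurations the data of \cite{LT11} together with $x^{[p]}=0$ force $d\ge 4$, which kills $(\ad\tilde{x})^{p^2-1}$ on weight grounds. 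Without this (or some substitute) case analysis your argument is incomplete; note also that the vanishing really is needed, since for $i\ge p^2$ one has $v_p(i!)>\lfloor i/p\rfloor$ and the divided powers are no longer automatically integral.
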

\begin{proof}
First assume the root system of $G$ is classical. This case is more elementary.  The group $G$ admits a rational representation $\rho\colon\,G\to \GL(V)$ defined over $\F_p$ whose kernel is central and whose image is either $\SL(V)$ or the stabiliser in $\SL(V)$ of a non-degenerate bilinear form $\Psi$ on $V$. Let $\bar{X}=({\rm d}_e\rho)(x)$ and $\bar{X}^{(i)}=\frac{1}{i!}\bar{X}^i$, where $0\le i\le p-1$.
	Since $x^{[p]}=0$ and  ${\rm d}_e\rho\colon\,\g\to \gl(V)$ is a faithful restricted representation of $\g$,  the exponentials $\bar{x}(t):=\exp(t\bar{X})\,=\,\sum_{i=0}^{p-1}\,t^i\bar{X}^i$ with $t\in k$ form a one-parameter unipotent subgroup of $\SL(V)$. Moreover, each $\bar{X}^j$ with $0\le j\le p-1$ is a sum of weight vectors of weight $\ge d\cdot j$ with respect to the conjugation action of $\rho(\tau(k^\times))$ on $\End(V)$.
	If $\rho(G)$ fixes $\Psi$ then $\bar{X}$ is skew-adjoint with respect to $\Psi$ and hence $\big\{\bar{x}(t)\,|\,\,t\in k\big\}$ is a one-parameter unipotent subgroup of $\rho(G)$. Also,
	\begin{equation}
	\label{Ad}
	\big(\Ad\,\bar{x}(t)\big)(Y)\,=\,\exp(t\bar{X})\cdot Y\cdot \exp(-t\bar{X})\,=\, \textstyle{\sum}_{r=0}^{2p-2}\,t^r\big(\sum_{i=0}^r\,(-1)^{i}\bar{X}^{(r-i)}\cdot Y\cdot \bar{X}^{(i)}\big)\end{equation}
	for all $Y\in \sl(V)$. It should be mentioned here that $2p-2\le p^2-1$ for any prime number $p$.
	
	Restricting $\Ad\,\bar{x}(t)$ to $({\rm d}_e\rho)(\g)$ and identifying the latter with $\g$ gives rise to a one-parameter unipotent subgroup of ${\rm Aut}(\g)^\circ$ which we call $U_x$.
	As $x=\bar{X}$ commutes with $e$, it follows from (\ref{Ad}) that $U_x$ fixes $e$. Taking the identity component of the inverse image of $U_x$ under a central isogeny $G\twoheadrightarrow {\rm Aut}(\g)^\circ$
	we then obtain a one-parameter unipotent subgroup of $G_e$ that satisfies all our requirements.

Now suppose $G$ is exceptional. By the remarks immediately preceding the proposition, we may assume that $e=\tilde{e}\otimes_\Z 1$ and $\tau\in X_*(G_\Z)$.		
	Let $\mathcal{X}$ be the set of all tuples $\big(x,X^{(p)},\ldots, X^{(p^2-1)}\big)$ in $\big(\N_p(\g)\bigcap\bigoplus_{i\ge d}\,\g_e(i)\big)\times \End(\g)^{p^2-p}$ such that each
	$X^{(i)}\in\bigoplus_{j\ge d i}\big(\End(\g)\big)(\tau,j)$ annihilates $e$ and
	the set
	$$\Big\{\sum_{i=0}^{p-1}\frac{t^i}{i!}(\ad x)^i+\sum_{i=p}^{p^2-1}t^iX^{(i)}\,|\,\,t\in k\Big\}$$
	forms a one-parameter subgroup of ${\rm Aut}(\g)$. If we choose $k$-bases of $\g$ and $\End(\g)$ contained in $\g_{\F_p}$ and $\End(\g_{\F_p})$, respectively, then the above conditions	
	can be rewritten in the form of polynomial equations with coefficients in $\F_p$ on the coordinates of $x$ and the $X^{(i)}$'s. In other words, $\mathcal{X}$ is a Zariski closed subset of
	$\big(\N_p(\g)\bigcap\bigoplus_{i\ge d}\,\g_e(i)\big)\times \End(\g)^{p^2-p}$
	defined over $\F_p$. The projection
	$\pi\colon\,\mathcal{X}\longrightarrow 	\N_p(\g)\bigcap\bigoplus_{i\ge d}\,\g_e(i)$ sending $\big(x,X^{(p)},\ldots, X^{(p^2-1)}\big)$ to $x$ is
	a morphism of affine varieties defined over $\F_p$. Evidently, we wish to show this map is surjective.
	
	Let $k_0$ denote the algebraic closure of $\F_p$ in $k$. We claim that \begin{equation}\tag{*}\pi\colon \mathcal{X}(k_0)\longrightarrow\, \N_p(\g_{k_0})\bigcap\textstyle{\bigoplus}_{i\ge d}\,\g_{e,k_0}(i)\ \ \text{ is surjective.}\end{equation}
	Given the claim, since $k_0$ is an algebraically closed subfield of $k$, it follows from general results of algebraic geometry that the morphism $\pi\colon \mathcal{X}(k)\longrightarrow 	\N_p(\g)\bigcap\bigoplus_{i\ge d}\,\g_e(i)$ is surjective as well; see \cite[Exercise~10.6]{GW}, for example.
	This means that a one-parameter unipotent group $U_x$ with the required properties exists for any $x\in	\N_p(\g)\bigcap\bigoplus_{i\ge d}\,\g_e(i)$.
	Since $G$ is simply connected and $p$ is good for $G$, there exists a central isogeny $\iota\colon\, G\twoheadrightarrow {\rm Aut}(\g)^\circ$. So we can take for $\mathcal{U}_x$ the identity component of $\iota^{-1}(U_x)$.
	
The claim will follow if we can show that $\pi_{\F_q}:\mathcal{X}(\F_q)\longrightarrow \N_p(\g_{\F_q})\bigcap\bigoplus_{i\ge d}\,\g_{e,\F_q}(i)$ is surjective, since $k_0$ is the union of its finite subfields. Thus we  assume that $x\in \g_{\F_q}$ for some $q=p^n$. As usual, we denote by $\Q_p$ and $\Z_p$ the field of $p$-adic numbers and the ring of $p$-adic integers, respectively. Let $K$ be an algebraic closure of $\Q_p$ and let
	$L/\Q_p$ be an unramified Galois extension of degree $n$ (we can take for $L$ the field $\Q_p(\zeta_n)$ where $\zeta_n$ is a primitive
	$(p^n-1)$st root of unity in $K$).
	Let $A$ be the ring of integers
	of $L$.  Since the extension $L/\Q_p$ is unramified, the field
	$A/pA$ has degree $n$ over its subfield $\Z_p/p\Z_p\cong \F_p$ Therefore, $A/pA\cong \F_q$.
	
	By construction, $A$ is a local ring with maximal ideal $pA$ and any bad prime for $G$ is invertible in $A$. Since the torus $T_\Z$ is split over $\Z$ and $\tau(\mathbb{G}_m)\subset T_\Z$ we have that $\g_\Z\,=\,\bigoplus_{i\in\Z}\,\g_\Z(\tau,i)$.	
	Since $\tau(K^\times)$ is optimal for $\tilde{e}$, arguing as in \cite[p.~285]{Spa} one observes that each $A$-module
	$\big[\tilde{e},\g_A(\tau, i)\big]$ is a direct summand of $\g_A(\tau, i+2)$
	and $[\tilde{e},\g_A(\tau, i)]\,=\,\g_A(i+2)$ for all $i\ge 0$. From this it is follows that each $\g_{\tilde{e},A}(i)$ is a direct summand of the free $A$-module $\g_A(\tau, i)$. Since $A/pA\cong \F_q$ the natural map $\g_A\twoheadrightarrow \g_A\otimes_A\F_q\cong\g_{\F_q}$ sends $\tilde{e}$ onto $e$ and $\g_{\tilde{e},A}$ onto $\g_{e,\F_q}$. It follows that
	there exists
	an element $\tilde{x}\in \bigoplus_{i\ge d}\,\g_{\tilde{e},A}(i)$ such that
	$x=\tilde{x}\otimes_A 1$.
	
	Suppose $G$ is an exceptional group.  Since $p$ is a good prime for $G$, the tables in \cite{Ste} show that
	$(\ad \tilde{x})^{p^2-1}=0$ unless either
	$G$ is of type ${\rm E}_8$, $p=7$, and $e$ is regular or
	$G$ is of type ${\rm E}_7$, $p=5$, and $e\in\N(\g)$ is regular or subregular.
	In view of \cite[pp.~122, 185]{LT11} this implies that if $(\ad \tilde{x})^{p^2-1}\ne 0$ then $\g_e(i)=0$ for $i$ odd and $\g_e(2)=ke$. Since in all these cases $e^{[p]}\ne 0$ and $x\in\bigoplus_{i\ge d}\,\g_e(i)$ lies in $\N_p(\g)$ by our assumption, we must have $d\ge 4$. From this it is immediate that $(\ad\tilde{x})^{p^2-1}=0$ in all cases.
	
	We now put
	$\tilde{X}^{(i)}:=\frac{1}{i!}(\ad \tilde{x})^i$ for $0\le i\le p^2-1$.
	The set $U_{\tilde{x}}$ of all
	linear operators \begin{equation*}
	\exp(t\,\ad\tilde{x})\,=\,\textstyle{\sum}_{i=1}^{p^2-1}
	\,t^i\tilde{X}^{(i)}\end{equation*} with $t\in K$ forms a one-parameter unipotent subgroup of ${\rm Aut}(\g_{K})^\circ$.
	Since $(\ad x)^p=0$ and $\ad x=\ad(\tilde{x}\otimes_A 1)=(\ad \tilde{x})\otimes_A 1$ as endomorphisms of $\g_{\F_p}$ we have that $(\ad \tilde{x})^p( \g_{A}) \subseteq  p\cdot \g_A$.
	But then $\frac{1}{p}(\ad \tilde{x})^p(\g_A)\subseteq \g_A$.
	Since $(p-1)!$ is invertible in $A$, it follows that $\tilde{X}^{(p)}$ preserves $\g_A$. Any positive integer $n\le p^2-1$ can be uniquely presented as $n=ap+b$ with $0\le a,b\le p-1$. Since $n!\,=\,p^a\cdot r_n$ for some $r_n\in\Z$ coprime to $p$ and
	$(\ad \tilde{x})^n\,=\,\big((\ad\tilde{x})^p\big)^a\cdot(\ad \tilde{x})^b$, we have that $$\tilde{X}^{(n)}\,=\,\frac{1}{r_n}\big(\tilde{X}^{(p)}\big)^a\cdot
	\Big(\tilde{X}^{(1)}\big)^b, \quad\ r_n\in A^\times.$$
	This implies that each endomorphism $\tilde{X}^{(i)}$ preserves $\g_A$.
	As a consequence, the closed subgroup $U_{\tilde{x}}$ of ${\rm Aut}(\g_{K})^\circ$ is defined over $A$.
	We now set
	$X^{(i)}:=\tilde{X}^{(i)}\otimes_A 1$. In view of our earlier remarks
	it is straightforward to check that the collection of endomorphisms $\{X^{(i)}\,|\,\,0\le i\le p^2-1\}$ of $\g$ possesses all required properties.
	Since the unipotent group $U_{\tilde{x}}$ is defined over $A$, the
	set $U_x:=\big\{\sum_{i=0}^{p^2-1}\,t^iX^{(i)}\,|\,\,t\in k\big\}$, obtained by base-changing $U_{\tilde{x}}$, forms a one-parameter unipotent subgroup of
	${\rm Aut}(\g)^\circ$.
	
	The claim is proved.
\end{proof}
\begin{remark}
	It seems plausible that one can always find a one-parameter subgroup $\mathcal{U}_x$ in $G_e$ satisfying the conditions of Proposition~\ref{expo} and such that $X^{(i)}=0$ for $i>2p-2$. By \cite{McN04},
	for any nonzero $x\in\N_p(\g)$ the optimal parabolic $P(x)$ contains a nice one-parameter subgroup $U_x$ with $\Lie(U_x)=kx$.
	In fact, $U_x$ lies in a connected subgroup of type ${\rm A}_1$ whose Lie algebra contains $x$. By \cite{Sei},
	the number of nonzero $X^{(i)}$'s with $i>0$ associated with  $U_x$ is always bounded by $2p-2$. Moreover, it follows from \cite[Lemma~4.2 and Corollary~4.3(i)]{Sob} that
	if $[x,e]=0$ for some nonzero $e\in\N(\g)$ then $U_x\subseteq G_e$. However, it is not clear from the constructions in {\it loc.\,cit.} that the endomorphisms $X^{(i)}$ with $i\ge p$ coming from the distribution algebra of $U_x$ have the desired weight properties  with respect to an optimal cocharacter for $e$.	
\end{remark}






\section{Lie subalgebras with non-semisimple socles and exotic semidirect products}\label{esdps}
\subsection{The general setup}\label{setup} In this section we always assume that $G$ is an exceptional algebraic group of rank $\ell$ defined over an algebraically closed field $k$ of characteristic $p>0$. We let $\h$ be a semisimple restricted Lie subalgebra of $\g=\Lie(G)$ whose socle is not semisimple. Since $\h$ is restricted, it follows from  Block's theorem
\cite[Corollary~3.3.5]{Str04} that $\h$ contains a minimal ideal $I$ such that $I\cong S\otimes \OO(m;\underline{1})$ for some simple Lie algebra $S$ and $m\ge 1$ and $\h/\c_\h(I)$ is sandwiched between $S\otimes \OO(m;\underline{1})$ and $({\rm Der}(S)\otimes
\OO(m;\underline{1}))\rtimes ({\rm Id}_S\otimes W(m;\underline{1}))$. Moreover, the canonical projection
$\pi\colon\,\h\to W(m;\underline{1})$ maps $\h$ onto a {\it transitive} Lie subalgebra of $W(m;\underline{1})$. Recall the latter means that $\pi(\h)\subseteq W(m;\underline{1})$ does not preserve any nonzero proper ideals of $\OO(m;\underline{1})$.

We identify $I$ with $S\otimes \OO(m;\underline{1})$ and let $\mathcal{I}$ and $\mathcal{S}$ denote the $p$-envelopes of $I$ and $S\otimes 1$ in $\g$, respectively. Since $\h$ is restricted, $\mathcal{I}$ is an ideal
of $\h$. Since $\h$ is semisimple and $\z(\mathcal{I})$ is an abelian ideal of $\h$, it must be that $\z(\mathcal{I})=0$.
Since $\z(\mathcal{S})\subseteq \z(\mathcal{I})$ due to the nature of Lie multiplication in $S\otimes \OO(m;\underline{1})$,
we must have $\z(\mathcal{S})=0$. Our
discussion in Section~\ref{Prelim} then shows that $\mathcal{S}\cong S_p$ as restricted Lie algebras.

Since $S\otimes 1$ is a simple Lie subalgebra, it follows from \cite[Theorem~1.3]{HSMax} that either $S\cong\Lie(\mathcal{H})$
for some simple algebraic $k$-group $\mathcal{H}$ or $S\cong \psl_{rp}$ for $r\ge 1$ or $S\cong W(1;\underline{1})$. In any event, $S$ is a restricted Lie algebra, so that $S=S_p$. Since $\psl_{rp}=\sl_{rp}/\z(\sl_{rp})$ contains toral subalgebras of dimension $rp-2$ and $\rk(G)\le 8$, the case $S\cong \psl_{rp}$ may occur only if $r=1$.

We let $\t$ be a toral subalgebra of maximal dimension in the restricted Lie algebra $S$ and denote by $\t_{\rm reg}$ the set of all $t\in \t$ with the property that $\t={\rm span}\,\{t^{[p]^i}\,|\,i\ge 1\}$. At $\t^{\rm tor}$ is
an $\F_p$-form of $\t$, it is a routine exercise to check that $\t_{\rm reg}$ is nonempty and Zariski open in $\t$.

The above discussion then shows that $\t\otimes 1$ is a toral
subalgebra of $\g$. We pick $z\in \t_{\rm reg}$.
Since $\ad\,(z\otimes 1)$ is semisimple and
$[z\otimes 1,\h]\subseteq I$ we also have that
$\h=\h_{z\otimes 1}+I$, where $\h_{z\otimes 1}$ denotes the centraliser $\c_\h(z\otimes 1)$ of $z\otimes 1$ in $\h$. The latter yields
\begin{equation}\pi(\h)=\pi(\h_{z\otimes 1})\label{projeq}\end{equation} showing that $\pi(\h_{z\otimes 1})$ is a transitive Lie subalgebra of $W(m;\underline{1})$. Since $\c_\h(I)\subseteq \h_{z\otimes 1}$, the factor algebra $\h_{z\otimes 1}/\c_\h(I)$ identifies with a Lie subalgebra of $(\Der(S)_z\otimes\OO(m;\underline{1}))\rtimes ({\rm Id}_S\otimes \pi(\h_{z\otimes 1}))$.

Since $z\otimes 1$ is a semisimple element of $\g$, Lemma~\ref{ss-der} shows  that $[\g_{z\otimes 1},\g_{z\otimes 1}]$ is a restricted Lie subalgebra of $\g$.
On the other hand,
our characterisation of $S$ yields that $\t$ is a
self-centralising torus of $S$. Hence $I_{z\otimes 1}=\t\otimes\OO(m;\underline{1})$. Let $\OO(m;\underline{1})_{(1)}$ denote the maximal ideal of the local ring $\OO(m;\underline{1})$.
Since $\t\otimes \OO(m;\underline{1})_{(1)}$ is stable under the action of $\Der(S)_z\otimes \OO(m;\underline{1})$
and $\pi(\h_{z\otimes 1})$ is a transitive Lie subalgebra
of $W(m;\underline{1})$, there exist $u\in \h_{z\otimes 1}$ and $v\in k z\otimes \OO(m;\underline{1})_{(1)}$ such that
\begin{equation*}
[u,v]\,\equiv z\otimes 1 \mod \t\otimes \OO(m;\underline{1})_{(1)}.\end{equation*}
Since $\h\cong \h_p$ is semisimple, the uniqueness of the restricted Lie algebra structure on $\h$ gives $\big(S\otimes \OO(m;\underline{1})_{(1)}\big)^{[p]}=0$. As $\t\otimes \OO(m;\underline{1})$ is abelian and $[\g_{z\otimes 1},\g_{z\otimes 1}]$ is restricted, we deduce that $[u,v]^{[p]}=z^{[p]}\otimes 1$ lies in $[\g_{z\otimes1},\g_{z\otimes 1}]$.
Since $z\in \t_{\rm reg}$ and $\z(\g_{z\otimes 1})$ is restricted, we obtain that
\begin{equation}\label{derived}\t\otimes 1\subseteq \z(\g_{z\otimes 1})\cap
[\g_{z\otimes 1},\g_{z\otimes 1}].\end{equation}
\subsection{Describing the socle of $\h$}\label{3.2}
In this subsection we are going to give a more precise description of the socle of $\h$. A Lie algebra $L$ is said to be {\it decomposable} if it can be presented as a direct sum of two commuting nonzero ideals of $L$.
\begin{lemma}\label{socle}
Let $\h$, $I$, $z$ and $m$ be as in \S~\ref{setup}. Then $G$ is of type ${\rm E}$ and the following hold:
\begin{itemize}
\item[(i)\ ] $\g_{z\otimes 1}$ is a Levi subalgebra of type ${\rm A}_{p-1}{\rm A}_{\ell-p}$ in $\g$ and $\z(\g_{z\otimes 1})=k(z\otimes 1)$.

\smallskip
\item[(ii)\ ] $S$ is either $\sl_2$ or $W(1;\underline{1})$ and $m=1$.

\smallskip

\item[(iii)\ ] $\h$ has no minimal ideals isomorphic to $\psl_{rp}$.

\smallskip

\item[(iv)\ ] $I$ is the unique non-simple minimal ideal of $\h$ and  ${\rm Soc}(\h)=I\oplus \c_\h(I)$. Any minimal ideal of $\c_\h(I)=\soc(\c_\h(I))$ is isomorphic to the Lie algebra of a simple algebraic $k$-group.

\smallskip

\item[(v)\ ] If $\c_\h(I)\ne 0$ then $\h$ is decomposable. More precisely, $\h\cong (\h/\c_\h(I))\oplus \c_\h(I)$ as Lie algebras and
$\h/\c_\h(I)\cong (S\otimes \OO(1;\underline{1}))\rtimes ({\rm Id}_S\otimes \mathcal{D})$ for some transitive Lie subalgebra $\mathcal{D}$ of $W(1;\underline{1})$.
\end{itemize}
 \end{lemma}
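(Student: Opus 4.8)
The plan is to read off parts (i)--(iii) from a close analysis of the semisimple element $z\otimes 1$ via the inclusion~(\ref{derived}), and then to obtain (iv) and (v) from general structure theory together with the resulting shape of $S$. For (i), identify $\g_{z\otimes 1}$ with $\Lie(L)$, where $L:=C_G(z\otimes 1)$ is a connected reductive subgroup of $G$ of maximal rank; by Lemma~\ref{ss-der} and (\ref{derived}) the space $\z(\g_{z\otimes 1})\cap[\g_{z\otimes 1},\g_{z\otimes 1}]$ is a nonzero restricted subalgebra containing $\t\otimes 1$. Since $p$ is good, $[\g_{z\otimes 1},\g_{z\otimes 1}]=\Lie(\D L)$ and $\z(\Lie L)\cap\Lie(\D L)=\z(\Lie(\D L))$; and because for a simple algebraic group $H$ of good characteristic $\z(\Lie H)\ne 0$ only when $H$ is of type $\mathrm{A}_{np-1}$, in which case $\z(\Lie H)$ is one-dimensional (spanned by the image of the identity map), the group $\D L$ must have at least $\dim\t$ simple factors of type $\mathrm{A}_{n_ip-1}$. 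Each such factor has rank $\ge p-1$, so $\dim\t\cdot(p-1)\le\rk(\D L)\le\ell\le 8$; as $p>3$ (and $p>5$ for $\mathrm{E}_8$), this forces $p\in\{5,7\}$, each factor equals $\mathrm{A}_{p-1}$, and only $\mathrm{E}_6,\mathrm{E}_7,\mathrm{E}_8$ contain an $\mathrm{A}_{p-1}$-subsystem, so $G$ is of type $\mathrm{E}$. The same inequality gives (iii): a minimal ideal isomorphic to $\psl_{rp}$ would have $r=1$ (as $\rk(G)\le 8$) and toral rank $p-2\ge 3$, forcing the contradiction $(p-2)(p-1)\le 8$. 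Combining this with the classification of simple subalgebras of $\g$ in \cite{HSMax} and the bound $\rk\mathcal{H}\le 8/(p-1)$ leaves $S\cong\sl_2$ or $S\cong W(1;\underline{1})$ (the remaining case $S\cong\Lie(\mathcal{H})$ with $\rk\mathcal{H}=2$, possible only for $G=\mathrm{E}_8$, $p=5$, being eliminated by the refinement below), and hence $\dim\t=1$, so $\D L$ has a single $\mathrm{A}_{p-1}$-factor.

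It remains, for (i), to show that $L$ is the Levi subgroup of type $\mathrm{A}_{p-1}\mathrm{A}_{\ell-p}$ and $\z(\g_{z\otimes 1})=k(z\otimes 1)$. Here I would use that, as $z\in\t_{\rm reg}$ and $S\cong\sl_2$, the element $z\otimes 1$ is a nonzero scalar multiple of the semisimple member $h$ of an $\sl_2$-triple $\{e,h,f\}$ in $\g$ with $e^{[p]}=f^{[p]}=0$; feeding $e$ and $h$ into the analysis of $\sl_2$-triples with semisimple $h$ in \S~\ref{3.4} (and Remark~\ref{nonstandard}) pins down the adjoint orbit of $e$, and hence $L=C_G(h)$, as claimed (when $S\cong W(1;\underline{1})$ one argues similarly, its self-centralising toral subalgebra again being one-dimensional). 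Part (ii) is then completed by the observation that $I_{z\otimes 1}=\t\otimes\OO(m;\underline{1})$ is an abelian subalgebra of $\g_{z\otimes 1}$ of dimension $p^m$, whereas a reductive subalgebra of $\g$ of rank $\le 8$ having an $\mathrm{A}_{p-1}$-factor has no abelian subalgebra of dimension as large as $p^2$; hence $m=1$.

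Parts (iv) and (v) follow from more general considerations. By Block's theorem every minimal ideal of $\h$ is simple or of the form $S'\otimes\OO(m';\underline{1})$ with $m'\ge 1$; running the preceding argument for a second non-simple minimal ideal $I'$ and using that distinct minimal ideals of $\h$ commute (so that $z'\otimes 1\in\g_{z\otimes 1}$ but $z'\otimes 1\notin\z(\g_{z\otimes 1})=k(z\otimes 1)$, and symmetrically with the roles of $I$ and $I'$ reversed) forces $I'=I$; thus $I$ is the unique non-simple minimal ideal, and commutativity of distinct minimal ideals gives $\soc(\h)=I\oplus\soc(\c_\h(I))$. One then shows that $\c_\h(I)$ coincides with its socle and that each of its minimal ideals is the Lie algebra of a simple algebraic $k$-group: the simple minimal ideals of $\c_\h(I)$ are of the three types of \cite{HSMax}, with $\psl_p$ excluded as in (iii) and $W(1;\underline{1})$ excluded using the precise description of $\c_\g(I)$ obtained in \S~\ref{3.9}, while non-simple minimal ideals of $\c_\h(I)$ are excluded by the uniqueness of $I$ just established. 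This is (iv). Finally, writing $\c_\h(I)=\bigoplus_j K_j$ with $K_j\cong\Lie(\mathcal{H}_j)$ simple of adjoint type, each $\Der(K_j)=K_j$ and a derivation of $\bigoplus_j K_j$ preserves every $K_j$, so $\Der(\c_\h(I))=\c_\h(I)$; hence the adjoint action $\h\to\Der(\c_\h(I))$ is surjective with kernel $\h_0:=\c_\h(\c_\h(I))$, giving $\h=\h_0\oplus\c_\h(I)$ with $\h_0\cong\h/\c_\h(I)$. As $\Der(S)=S$ for $S\in\{\sl_2,W(1;\underline{1})\}$ and $m=1$, the algebra $\h/\c_\h(I)$, which by \S~\ref{setup} lies between $S\otimes\OO(1;\underline{1})$ and $(\Der S\otimes\OO(1;\underline{1}))\rtimes(\mathrm{Id}_S\otimes W(1;\underline{1}))$, is forced to be $(S\otimes\OO(1;\underline{1}))\rtimes(\mathrm{Id}_S\otimes\mathcal D)$ for $\mathcal D=\pi(\h)$ transitive, which is (v).

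The step I expect to be hardest is the precise identification of $\g_{z\otimes 1}$ in (i): passing from ``$\D(C_G(z\otimes 1))$ has exactly one $\mathrm{A}_{p-1}$-factor'' to ``$C_G(z\otimes 1)$ is the Levi of type $\mathrm{A}_{p-1}\mathrm{A}_{\ell-p}$ with one-dimensional centre'' requires excluding the other maximal-rank reductive subgroups of $\mathrm{E}_6,\mathrm{E}_7,\mathrm{E}_8$ containing an $\mathrm{A}_{p-1}$-factor --- notably the pseudo-Levi of type $\mathrm{A}_4\mathrm{A}_4$ in $\mathrm{E}_8$ when $p=5$, and a proper sub-Levi of type $\mathrm{A}_{p-1}$ --- and this seems to need the $\sl_2$-triple machinery of \S~\ref{3.4} together with a Borel--de Siebenthal case analysis (it is also here that the rank-$2$ possibility for $\mathcal{H}$ gets eliminated). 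The exclusion of $W(1;\underline{1})$-ideals inside $\c_\h(I)$ in (iv) is of comparable delicacy and relies on the same circle of ideas.
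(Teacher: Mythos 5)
Your skeleton for (i) and the first half of (ii) matches the paper's: one counts the $\mathrm{A}_{rp-1}$-factors of the centraliser forced by (\ref{derived}), after conjugating $\g_{z\otimes 1}$ to a standard Levi subalgebra (legitimate in good characteristic, which also makes your worry about the pseudo-Levi $\mathrm{A}_4\mathrm{A}_4$ moot, since $p=5$ is bad for $\mathrm{E}_8$; no $\sl_2$-triple machinery is needed for (i)). Your argument for $m=1$ --- that $\t\otimes\OO(m;\underline{1})_{(1)}$ is an abelian subalgebra of nilpotent elements of dimension $p^m-1$, too large to fit inside a Levi of type $\mathrm{A}_{p-1}\mathrm{A}_{\ell-p}$ --- is correct and rather more elementary than the paper's Killing-form computation. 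But there is a genuine gap in (iii): the inclusion (\ref{derived}) is manufactured in \S\ref{setup} from an element $v\in kz\otimes \OO(m;\underline{1})_{(1)}$, so it is only available for minimal ideals of the form $S\otimes\OO(m;\underline{1})$ with $m\ge 1$. For a \emph{simple} minimal ideal $I'\cong\psl_{rp}$ there is no analogue (a regular toral element of $\psl_p$ does not lie in the derived subalgebra of its centraliser), and your inequality $(p-2)(p-1)\le 8$ does not follow from anything you have established; a toral subalgebra of dimension $p-2\le 5$ in $\g$ is unobjectionable on its own. The paper argues differently: $I'$ commutes with $I$, hence $I'\subseteq\g_{z\otimes 1}$; since $\kappa$ is non-degenerate on this Levi and $\dim I'\ge p^2-2>\tfrac12\dim\g_{z\otimes 1}$, the restriction of $\kappa$ to $I'$ is nonzero, so $\psl_{rp}$ would admit a faithful representation with nonzero trace form, contradicting Block's theorem.

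The uniqueness of $I$ in (iv) is also not established by your observation that $z'\otimes 1\in\g_{z\otimes 1}\setminus\z(\g_{z\otimes 1})$ ``and symmetrically'': no contradiction follows from those facts alone, and crude dimension counts cannot close it (for $\mathrm{E}_7$, $p=7$ one has $\g_{z\otimes 1}\cong\gl_7$, whose Borel nilradical has dimension $21\ge 18=\dim(\sl_2\otimes\OO(1;\underline{1})_{(1)})$). The paper's mechanism is essential here: $\g_{z\otimes 1}\cong\gl_p\oplus\sl_{\ell-p+1}$ acts faithfully on a module of dimension $\ell+1$, and by Block's theorem on differentiably simple modules any composition factor on which a perfect ideal $S'\otimes\OO(m';\underline{1})$ with $m'\ge 1$ acts nontrivially has dimension at least $2p^{m'}>\ell+1$. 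Finally, your exclusion of Witt-algebra minimal ideals of $\c_\h(I)$ by appeal to \S\ref{3.9} is circular, as that subsection sits downstream of this lemma. So the plan is sound for (i)--(ii) and (v), but (iii) and the uniqueness statement in (iv) need to be reproved along the paper's lines.
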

\begin{proof}
Replacing $z\otimes 1$ by an $(\Ad G)$-conjugate we may assume that $\g_{z\otimes 1}=\Lie(G_{z\otimes 1})$ is a standard Levi subalgebra of $\g$. Let $a=\dim\big([\g_{z\otimes 1},\g_{z\otimes 1}]\cap\z(\g_{z\otimes 1})\big)$.
By \cite[2.1]{PSt}, the restricted Lie algebra $\g_{z\otimes 1}$ decomposes into a direct sum of its restricted ideals each of which either has form $\gl_{rp}$ for some $r\ge 0$ or is isomorphic to $\Lie(\mathcal{H})$ for some simple algebraic $k$-group $\mathcal{H}$.
Since $\dim\z(\gl_{rp})=\dim\z(\sl_{rp})=1$ for all $r\ge 1$ and each $\Lie(\mathcal{H})$ is simple,
$a$ coincides with the number of
irreducible components of type ${\rm A}_{rp-1}$ with $r\ge 1$ of the standard Levi subgroup $G_{z\otimes 1}$.

In view of \eqref{derived} we have that $a\ge \dim\t$. In particular, $a\ge 1$.
Since $p$ is a good prime for $G$, examining the subdiagrams of the Dynkin diagram of $G$ yields that $G$ is of type ${\rm E}$ and $r=1$. Since $\t$ is a torus of maximal dimension in $S$ and $1=d\ge\dim\t$, we now deduce that $S$
is either $\sl_2$ or $W(1;\underline{1})$.
Since $z\otimes 1$ lies in the Lie algebra of the ${\rm A}_{p-1}$-component of $G_{z\otimes 1}$, it also follows that
 $G_{z\otimes 1}$ has type ${\rm A}_{p-1}A_{\ell-p}$ proving (i).

Suppose $I'$ is a simple ideal of $\h$ isomorphic to $\psl_{rp}$ for some $r\ge 1$. Then $[I,I']\subseteq I\cap I'=0$ showing that $I'\subseteq \g_{z\otimes 1}$. The restriction of the Killing form $\kappa$ of $\g$ to the Levi subalgebra $\g_{z\otimes 1}$  is non-degenerate. Since $\dim I'\ge p^2-2>(\dim \g_{z\otimes 1})/2$ by part~(i), it follows that the Lie algebra $\psl_{rp}$
admits a faithful representation with a nonzero trace form. This, however, contradicts \cite{Bl}, proving (iii). In view of \cite[Lemma~2.7]{BGP} and our earlier remarks this entails that any minimal ideal
$I'=S'\otimes \OO(m';\underline{1})$ of $\h$ has the property that $\Der(S')=\ad S'$.
Applying \cite[Corollary~3.6.6]{Str04} then yields that $\h$ is sandwiched between
${\rm Soc}(\h)=\bigoplus_{i=1}^d(S_i\otimes\OO(m_i;\underline{1}))$ and $\bigoplus_{i=1}^d\,(S_i\otimes \OO(m_i;\underline{1}))\rtimes ({\rm Id}_{S_i}\otimes W(m_i;\underline{1}))$. For $1\le i\le d$ we denote by $\pi_i$ the projection from $\h$ to $W(m_i;\underline{1})$. Each $\pi_i(\h)$ acts transitively on $\OO(m_i;\underline{1})$.

To keep the notation introduced earlier we assume that $I_1=I$ so that $m_1=m$, $S_1=S$ and $\pi_1=\pi$. For $2\le i\le d$ let $I_i$ be the minimal ideal $S_i\otimes \OO(m_i;\underline{1})$.
Since $[I,I_i]=0$ and $\pi_i(\h)=\pi_i(\h_{z\otimes 1})$ by (\ref{projeq}), each $I_i$ is a minimal ideal of
$\h_{z\otimes 1}$. Since $\g_{z\otimes 1}\cong \gl_p\times \sl_{l-p}$ acts faithfully on a vector  space of dimension $\ell +1=p+(\ell-p+1)$
and $I_i=[I_i,I_i]$, the Lie algebra $\h_{z\otimes 1}$ affords an irreducible restricted representation $\rho_i$
of dimension $\le \ell+1$ such that $\rho(I_i)\ne 0$. We let $V_i$ be the corresponding $\h_{z\otimes 1}$-module.

 Suppose $m_i\ge 1$ for some $i\in\{2,\ldots,m\}$. Then it follows from Block's theorem on differentiably simple modules that there is a faithful
$S_i$-module $V_i'$ such that $V_i\cong V_{i}'\otimes \OO(m_i;\underline{1})$ as vector spaces and $\rho_i(\h_{z\otimes 1})$ embeds into the Lie subalgebra $$(\gl(V_{i}')\otimes \OO(m_i;\underline{1}))
\rtimes({\rm Id}_{\gl(V_{i}')}\otimes W(m_i;\underline{1}))$$ of $\gl(V_i)$; see \cite[Theorem~3.3.10 and Corllary~3.3.11]{Str04}. Since the
$S_i$-module $V_{i}'$ is faithful, it is of dimension at least $2$ and so it must be that $\dim V_i= p^{m_i}\dim V_{i}'\ge 2p^{m_i}$. But then $\ell+1\ge \dim V\ge 2p$ which is impossible as $p$ is a good prime for $G$. We thus deduce that
$m_i=0$ for all $i\ge 2$. It follows that $\h$ contains a restricted ideal $\h'$ isomorphic to $(S\otimes W(m;\underline{1}))\rtimes({\rm Id}_S\otimes \mathcal{D})$
for some transitive Lie subalgebra $\mathcal{D}$ of $W(m;\underline{1})$ and such that $\h=\h'\oplus \c_\h(\h')$.
 Moreover, any minimal ideal of $\c_\h(\h')=\c_\h(I)$ (if any) is isomorphic to the Lie algebra of a simple algebraic $k$-group. This proves (iv).

In order to finish the proof of the lemma it remains to show that $m=1$. Suppose $m>1$. If $m\ge 3$ then $\dim I\ge 3p^3>\dim \g$ which is false. Hence $m=2$ and $\dim I\ge 3p^2>(\dim \g)/2$. It follows that the restriction of the Killing form $\kappa$ to $I$ is nonzero. Write $\bar{\kappa}$ for the restriction of $\kappa$ to $\h'$. Since
$I$ is a minimal ideal of $\h'$ and $\bar{\kappa}(I,I)\ne 0$ it must be that $I\cap {\rm Rad}\,\bar{\kappa}=0$.

Let $x_1$, $x_2$ be generators of the local ring $\OO(2;\underline{1})$ contained in $\OO(2;\underline{1})_{(1)}$ and
put $x^\mathbf{p-1}:=x_1^{p-1}x_2^{p-1}$.
Pick any nonzero element $e\in S$. Then $e\otimes x^\mathbf{p-1}\in I$ and the above yields that $\kappa(e\otimes x^\mathbf{p-1},y)\ne 0$ for some $y\in\h'$. Let $V$ be any composition factor of the $(\ad \h')$-module $\g$ and write $\psi$ for the corresponding (restricted) representation of $\h'$.
If $\psi(I)=0$ then, of course,
$\psi(e\otimes x^\mathbf{p-1})\circ \psi(y)=0$. If $\psi(I)\ne 0$ then it follows from Block's theorem on differentiably simple modules that there is a $k$-vector space $V_0$ and a linear isomorphism
$V\cong V_0\otimes \OO(2;\underline{1})$ such that
$\psi(\h')$ embeds into $(\gl(V_0)\otimes \OO(2;\underline{1}))
\rtimes({\rm Id}_{\gl(V_0)}\otimes W(2;\underline{1}))$
and
$\psi\big(S\otimes \OO(2;\underline{1})_{(1)}\big)$ embeds into $\gl(V_0)\otimes \OO(2;\underline{1})_{(1)}$; see \cite[3.3]{Str04}.

Given a Lie algebra $L$ we write $L^d$ for the $d$th member of the lower central series of $L$.
As
$S$ is simple, $e\in S^{\,2p-2}$.  From this it is immediate that
$e\otimes x^\mathbf{p-1}\in
S\otimes \OO(2;\underline{1}\big)_{(2p-2)}$. But then
$\psi(e\otimes x^\mathbf{p-1})=E\otimes x^\mathbf{p-1}$ for some $E\in\gl(V_0)$. As $\psi(y)\in (\gl(V_0)\otimes \OO(2;\underline{1}))
\rtimes({\rm Id}_{\gl(V_0)}\otimes W(2;\underline{1}))$, this implies that $\psi(e\otimes x^\mathbf{p-1})\circ \psi(y)$ is a square-zero endomorphism. It follows that $\ad(e\otimes x^\mathbf{p-1})\circ \ad y$ acts nilpotently on any composition factor of the $(\ad \h')$-module $\g$. But then
$\kappa(e\otimes x^\mathbf{p-1},y)=\tr(\ad(e\otimes x^\mathbf{p-1})\circ \ad y)=0$ contrary to our choice of $y$. Therefore, $m=1$ and our proof is complete.
 \end{proof}

Lemma~\ref{socle}(i) shows that $z$ is a scalar multiple of a nonzero toral element of $S$.
Since $S$ is either $\sl_2$ or $W(1;\underline{1})$ by Lemma~\ref{socle}(ii) we may assume without loss that there is an $\sl_2$-triple $\{e,h,f\}\subset S$ such that $h=z$. Indeed, this is clear when $S=\sl_2$ and when $S=W(1;\underline{1})$ it follows from the fact that any toral element of $S$ is conjugate under the action of ${\rm Aut}(S)$
to either $(1+x)\partial$ or to a multiple of $x\partial$; see \cite{P92}, for example.

\subsection{Determining the conjugacy class of $S\otimes 1$}\label{3.5}
Recall from \S~\ref{3.2} that $S$ is either $\sl_2$ or $W(1;\underline{1})$ and
$S\otimes 1$ is a restricted subalgebra of $\g$. It follows that $(e\otimes 1)^{[p]}=(f\otimes 1)^{[p]}=0$ and $(h\otimes 1)^{[p]}=h\otimes 1$.
\begin{prop}\label{S}
Let $\h$ and $S$ be as in \S~\ref{3.2} and suppose further that $\h$ is indecomposable. Then the following hold:
\begin{itemize}
\item[(i)\,] $G$ is of type ${\rm E}_7$ or ${\rm E}_8$ and $p\in\{5,7\}$.
	
\smallskip	
	
\item[(ii)\,] If $p=5$ then $G$ is of type ${\rm E}_7$ and $e\otimes 1\in\OO({\rm A}_3{\rm A}_2{\rm A}_1)$.

\smallskip

\item[(iii)\,] If $p=7$ then $G$ is of type ${\rm E}_7$ or ${\rm E}_8$ and $e\otimes 1\in \OO({\rm A}_2{{\rm A}_1}^3)$.

\smallskip

\item[(iv)\,] The $\sl_2$-triple $\{e\otimes 1,h\otimes 1,f\otimes 1\}$ is standard and $S \cong \sl_2$.
\end{itemize}	
\end{prop}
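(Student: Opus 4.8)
The plan is to combine the constraints already extracted in \S\ref{setup}--\S\ref{3.2} with the structure theory of standard $\sl_2$-triples from \S\ref{3.4}, and to run a dimension/weight argument to pin down the orbit of $e\otimes 1$. We already know from Lemma~\ref{socle} that $G$ is of type ${\rm E}$, that $\g_{z\otimes 1}$ is a Levi subalgebra of type ${\rm A}_{p-1}{\rm A}_{\ell-p}$ with one-dimensional centre $k(z\otimes1)$, that $S\in\{\sl_2,W(1;\underline{1})\}$, $m=1$, and that we may take $h=z$ inside an $\sl_2$-triple $\{e,h,f\}\subset S$. Since $p$ is good for $G$ and a Levi component of type ${\rm A}_{p-1}$ must fit inside the Dynkin diagram of ${\rm E}_\ell$, the inequality $p-1\le\ell$ immediately forces $p\in\{5,7\}$ (with $p=5$ possible only if $\ell\ge 6$ and $p=7$ only if $\ell\ge 8$, but type ${\rm E}_6$ has no ${\rm A}_6$); this will give part~(i) essentially for free, though one has to rule out ${\rm E}_6$: there $p=5$ gives $\g_{z\otimes1}$ of type ${\rm A}_4{\rm A}_1$, and I would argue that $z\otimes1$ then fails to satisfy \eqref{derived} with $\dim\t=1$, or rather that the relevant ${\rm A}_4$-regular nilpotent cannot support the required $v\in kz\otimes\OO(1;\underline{1})_{(1)}$ — this needs a brief check. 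Actually the cleaner route: the $\sl_2$-triple $\{e,h,f\}\subset S$ gives rise to $\{e\otimes1,h\otimes1,f\otimes1\}$ with $h\otimes1=z\otimes1$, a toral element whose centraliser is exactly the type ${\rm A}_{p-1}{\rm A}_{\ell-p}$ Levi; since $e$ is nilpotent in $S$ and $S\otimes\OO(1;\underline1)_{(1)}$ is $[p]$-nilpotent, $(e\otimes1)^{[p]}=0$, so by \S\ref{3.4} (McNinch) $e\otimes1$ and $f\otimes1$ are $(\Ad G)$-conjugate.

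Next I would identify the nilpotent orbit of $e\otimes1$. The key point is that $h\otimes1=z\otimes1$ lies in the ${\rm A}_{p-1}$-part of the Levi $\g_{z\otimes1}$, that $e\otimes1$ commutes with $z\otimes1$, and that $(\ad(h\otimes1))$ has eigenvalue $2$ on $e\otimes1$. Because the weighted Dynkin diagram of $\OO(e\otimes1)$ is recovered from a standard $\sl_2$-triple, and here $h\otimes1$ is a specific toral element inside a specific Levi, one can compare the trace/dimension data: $\dim\g_{e\otimes1}$ must be compatible with $e\otimes1$ being \emph{regular} inside the ${\rm A}_{p-1}$-Levi (this is where Remark~\ref{nonstandard} and the computations of \cite{P95} enter — the eigenvalues of $\ad(h\otimes1)$ on $\g$ and the requirement $(h\otimes1)^{[p]}=h\otimes1$). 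More concretely: $e\otimes1$ is distinguished-in-a-Levi and its semisimple part of the triple is forced by $z\otimes1$, so $\OO(e\otimes1)$ is the orbit whose weighted diagram makes a type ${\rm A}_{p-1}$ node "regular"; consulting the orbit tables in \cite[pp.~401--407]{Car} for ${\rm E}_7$ and ${\rm E}_8$, the orbit with centraliser of the right type and the right weighted diagram is ${\rm A}_3{\rm A}_2{\rm A}_1$ when $p=5$ and ${\rm A}_2{\rm A}_1^3$ when $p=7$. To finish parts~(ii)--(iii) I would verify that for $p=5$ type ${\rm E}_8$ is excluded at this stage (it is handled later, via containment in an ${\rm E}_7{\rm A}_1$ regular subalgebra, cf.\ Theorem~\ref{thm:esdps}(i), but here one notes $p=5$ is in fact \emph{bad} for ${\rm E}_8$, so the statement only claims type ${\rm E}_7$ when $p=5$).

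Finally, for part~(iv), I would show $S\cong\sl_2$ by ruling out $S\cong W(1;\underline1)$. Suppose $S\cong W(1;\underline1)$, so $S\otimes1$ is a restricted Witt subalgebra of $\g$ with $(e\otimes1)^{[p]}=0$; then $S$ contains toral elements of two conjugacy types and in particular a toral element $t$ with $\dim S_t=1$ but also nilpotent elements of various degrees of nilpotency. The $\sl_2$-triple $\{e\otimes1,h\otimes1,f\otimes1\}$ sits inside $S\otimes1\cong W(1;\underline1)$; since $e=x^2\partial$-type element has $(\ad e)^{p-1}\ne0$ on $S$ while the above orbit computation gives a bound on $(\ad(e\otimes1))$'s nilpotency degree, one gets a contradiction for $p\in\{5,7\}$. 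Cleaner: by Lemma~\ref{sl2} applied to the restricted $\sl_2(k)$-module $\g$ via $\{e\otimes1,h\otimes1,f\otimes1\}$, we have $\dim\g_{h\otimes1}\le\dim\g_{e\otimes1}$; but $\dim\g_{h\otimes1}=\dim\g_{z\otimes1}$ is the dimension of the type ${\rm A}_{p-1}{\rm A}_{\ell-p}$ Levi, and $\dim\g_{e\otimes1}$ is the centraliser dimension of the orbit found above — one checks these are in fact \emph{equal} (forcing the triple to be standard, as $h\otimes1$ then cannot be perturbed, cf.\ the discussion after \eqref{h-tau}), and that if $S=W(1;\underline1)$ the extra relations in $\h$ would produce a toral subalgebra of $\g$ of rank exceeding $\ell$, or an embedding $W(1;\underline1)\hookrightarrow\g$ incompatible with the orbit of $e\otimes1$ having the stated (small) nilpotency class. \textbf{The main obstacle} I expect is precisely this last elimination of $W(1;\underline1)$ together with the verification that the $\sl_2$-triple is standard: it requires marrying the explicit nilpotent-orbit data (centraliser dimensions, $\tau$-weight gradings on $\g_e$ from \cite{LT11}) with the module-theoretic bound of Lemma~\ref{sl2} and the regularity criterion of Remark~\ref{nonstandard}, rather than any single clean structural fact; the orbit identification in (ii)--(iii) is bookkeeping by comparison, but getting the exclusion of $W(1;\underline1)$ airtight in both characteristics is the delicate step.
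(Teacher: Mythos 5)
Your high-level strategy --- the bound $\dim\g_{e\otimes 1}\ge\dim\g_{h\otimes 1}$ from Lemma~\ref{sl2}, the centraliser dimension of the ${\rm A}_{p-1}{\rm A}_{\ell-p}$ Levi from Lemma~\ref{socle}(i), and a search through Carter's tables --- is exactly the paper's starting point, but three of the substantive steps have genuine gaps. First, the exclusion of ${\rm E}_6$ is not actually carried out: \eqref{derived} holds by construction from \S~\ref{setup}, so it cannot ``fail'' for ${\rm E}_6$, and the McNinch conjugacy of $e\otimes 1$ with $f\otimes 1$ says nothing about the type of $G$. What is needed is that for every orbit in ${\rm E}_6$ with $\dim\g_{e\otimes 1}\ge 28$ the inequality \eqref{high} holds, hence $\g(h_\tau,\bar 0)\cong\g(\tau,0)$ by \eqref{h-tau}, and no weighted Dynkin diagram yields a $\g(\tau,0)$ of type ${\rm A}_4{\rm A}_1$. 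Second, your orbit identification in (ii)--(iii) rests on the assertion that $e\otimes 1$ is ``regular inside the ${\rm A}_{p-1}$-Levi''; that is false --- if $e\otimes 1$ were regular in a Levi of type ${\rm A}_{p-1}{\rm A}_r$ we would be in precisely the non-standard situation of Remark~\ref{nonstandard}, which has to be eliminated separately (for ${\rm E}_7$, $p=5$ and $\OO({\rm A}_4)$ this takes an explicit Weyl-group computation showing $h\otimes 1$ would then have centraliser of type ${\rm D}_4{\rm A}_1$, contradicting Lemma~\ref{socle}(i)). Moreover, matching the type of $\g_{h\otimes 1}$ with that of $\g(\tau,0)$ is only legitimate when \eqref{high} holds; when it fails, $\g(h_\tau,\bar 0)$ is strictly larger than $\g(\tau,0)$, and one must rule out roughly a dozen candidate orbits one at a time by exhibiting extra roots in $\g(h_\tau,\bar 0)$. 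Your proposal does not engage with either point, and this is where most of the work lies.

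Third, the elimination of $S\cong W(1;\underline{1})$ does not go through as written. The claim that $e=x^2\partial$ satisfies $(\ad e)^{p-1}\ne 0$ on $S$ is false: since $(\ad(x^2\partial))^k(x^j\partial)=\bigl(\prod_{i=0}^{k-1}(j+i-2)\bigr)x^{j+k}\partial$, one checks that $(\ad e)^3=0$ on $W(1;\underline{1})$ for $p=5$ and $(\ad e)^4=0$ for $p=7$, so no contradiction with the nilpotency class of $\ad(e\otimes 1)$ arises. The ``cleaner'' alternative also fails: the two centraliser dimensions are not equal (e.g.\ $33$ versus $35$ for $\OO({\rm A}_3{\rm A}_2{\rm A}_1)$ in ${\rm E}_7$), and $W(1;\underline{1})$ has toral rank one, so no toral subalgebra of rank exceeding $\ell$ can appear. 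The argument that works is to take $u=x^{p-1}\partial$, which centralises $e$ and satisfies $[h,u]=-\bar{4}u$, so that $u\otimes 1\in\g_{e\otimes 1}\cap\g(h_\tau,-\bar 4)=\g_{e\otimes 1}(p-4)$; the Lawther--Testerman tables show this space vanishes for the relevant orbits in ${\rm E}_7$, while ${\rm E}_8$ requires a further computation with $(\ad(f\otimes 1))^4$ acting on $\g_{e\otimes 1}(3)$. You correctly flagged this elimination as the delicate step, but the specific mechanisms you propose for it would not close the argument.
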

\begin{proof} Since $p$ is a good prime for $G$, it follows from
Lemma~\ref{socle}(i) and our choice of $h\in S$ that $\dim \g_{h\otimes 1}=28$ if $G$ is of type
${\rm E}_6$;  $\dim \g_{h\otimes 1}=33$ if $G$ is of type ${\rm E}_7$ and $p=5$; $\dim \g_{h\otimes 1}=49$ if $G$ is of type ${\rm E}_7$ and $p=7$, and $\dim \g_{h\otimes 1}=52$ if $G$ is of type ${\rm E}_8$.
On the other hand, $\dim G_{e\otimes 1}=\dim \g_{e\otimes 1}\ge \dim \g_{h\otimes 1}$ by Lemma~\ref{sl2}. Looking through the tables in \cite[pp.~402--407]{Car} it is now straightforward to see that if $e\otimes 1\in \OO({\rm A}_{p-1}{\rm A}_r)$ for some $r\ge 0$, then $G$ is of type ${\rm E}_7$, $p=5$ and $r=0$. If $e\otimes 1$ is not of that type then the $\sl_2$-triple $\{e\otimes 1,h\otimes 1,f\otimes 1\}$ must be standard by Remark~\ref{nonstandard}.

Let $\Pi=\{\alpha_1,\ldots,\alpha_\ell\}$ be a basis of simple roots in the root system $\Phi=\Phi(G,T)$ and let $\tilde{\alpha}=
\sum_{i=1}^\ell n_i\alpha_i$ be the highest root of the positive system $\Phi^+(\Pi)$. In what follows we are going to use Bourbaki's numbering of simple roots in $\Pi$; see \cite[Planches~I--IX]{Bour}. Given a nilpotent $G$-orbit $\OO\subset\g$ we write $\Delta=(a_1,\ldots, a_\ell)$ for the weighted Dynkin diagram of $\OO$. We know that $a_i\in\{0,1,2\}$ and there is a nice representative $e'\in\OO$ which admits an optimal cocharacter $\tau\in X_*(T)$ such that
$e'\in\g(\tau,2)$ and $(\alpha_i\circ\tau)(\lambda)=\lambda^{a_i}$ for all $\lambda\in k^\times$ and $1\le i\le \ell$; see \cite[2.6]{P03}.  We put $\Delta'=(a_1'\ldots,a_\ell'):=\frac{1}{2}\Delta$ if all $a_i$ are even,
and $\Delta'=\Delta$ if $a_j$ is odd for some $j\le \ell$.


We first suppose that the $\sl_2$-triple $\{e\otimes 1,h\otimes 1,f\otimes 1\}$ is standard. If $e\otimes 1\in\OO(\Delta)$ and $\Delta'=(a'_1,\ldots, a'_\ell)$, then
looking through the tables in\cite[pp.~402--407]{Car} once again one observes that in the majority of cases specified at the beginning of the proof the inequality
\begin{equation}\label{high}
\textstyle{\sum}_{i=1}^\ell n_ia_i'<p\end{equation}
holds. If this happens then
$\sum_{i=1}^\ell m_ia_i'<p$ for all positive roots $\beta=\sum_{i=1}^\ell m_i\alpha_i\in\Phi^+(\Pi)$ and (\ref{h-tau}) yields that $\g(h_\tau,\bar{0})\cong\g(\tau,0)$ as Lie algebras. In type ${\rm E}_6$ the inequality (\ref{high}) holds in all cases of interest and there is no $\tau$ as above for which $\g(\tau,0)$ has type ${\rm A}_{p-1}{\rm A}_{\ell-p}$. In view of Remark~\ref{nonstandard} this rules out the case where $G$ is of type ${\rm E}_6$ thereby proving (i).

Suppose $G$ is of type ${\rm E}_7$ and $p=5$.
We first consider the case where $\{e\otimes 1,h\otimes 1,f\otimes 1\}$ is a standard $\sl_2$-triple. If (\ref{high}) holds for $\Delta'$ then
$\g_{h\otimes 1}\cong \g(\tau,0)$ must have type ${\rm A}_4{\rm A}_2$.
Since $\dim G_{e\otimes 1}\ge 33$, examining the Dynkin diagrams in
\cite[p.~403]{Car} one observes that $e\otimes 1\in \OO({\rm A}_3{\rm A}_2{\rm A}_1)$.
If (\ref{high}) does not hold for $\Delta'$ then \cite[p.~403]{Car} reveals that $\OO$ must have one of the following labels:
$$  {\rm A}_3,\, {{\rm A}_2}^2{\rm A}_1,\,  ({\rm A}_3{\rm A}_1)',\,
{\rm A}_3{{\rm A}_1}^2,\,{\rm D}_4,\,{\rm D}_4({\rm a}_1){\rm A}_1,\,{\rm A}_3{\rm A}_2.$$ Since a root system of type ${\rm A}_4{\rm A}_2$ does not contain subsystems of type ${\rm D}_4$, ${\rm A}_3{{\rm A}_1}^2$, ${\rm A}_2{{\rm A}_1}^3$ and ${\rm A}_5$, looking at the simple roots in $\Pi$
corresponding to those $i$  for which $a_i=0$ one can see that only the case where $e\otimes 1\in\OO\big({\rm D}_4({\rm a}_1){\rm A}_1\big)$ is possible. Since $p=5$, one can check directly that in this case the root system of $\g(h_\tau,\bar{0})$ with respect to $T$ has basis consisting of
$\alpha_1$, $\alpha_4$, $\alpha_5$, $\alpha_6$ and $
\beta={\scriptstyle{1\atop}\!{2\atop }\!{3\atop 2}\!{2\atop }\!
{1\atop }\!{1\atop }}$.
As a consequence, it has type ${\rm A}_4{\rm A}_1$, a contradiction.

Now suppose that $\{e\otimes 1,h\otimes 1,f\otimes 1\}$ is a non-standard $\sl_2$-triple. Then $e\otimes 1\in  \OO({\rm A}_4)$ by our remarks at the beginning of the proof. By \cite[p.~104]{LT11}, the reductive subgroup $C_{e\otimes 1}$ of $G_e$ has form $T_1\cdot \D C_{e\otimes 1}$, where $\D C_{e\otimes 1}$ is of type ${\rm A}_2$ and $T_1$ is a $1$-dimensional central torus in $C_{e\otimes 1}$. It is straightforward to see that $\c':=\Lie(\D C_{e\otimes 1})$ is generated by the simple root vectors $e_{\pm \alpha_6}$ and $e_{\pm\alpha_7}$.
Also, $T_0=\varpi_5^\vee(k^\times)$ where, as usual, $\varpi_i^\vee\in X_*(T)$ stands for the fundamental coweight of $\alpha_i\in \Pi$.  It follows that the restriction of the Killing form of $\g$ to $\c'$ is non-degenerate. Since $[e\otimes 1,\g(\tau,-2)]$ is orthogonal to $\g_e(0)=\Lie(C_{e\otimes 1})$ with respect to $\kappa$ and
$(h\otimes 1)-h_\tau\in \g_{e\otimes 1}(0)\cap [e\otimes 1,\g(\tau,-2)]$ by our discussion in \S~\ref{3.4}, it must be that $(h\otimes 1)-h_\tau\in \Lie(\varpi_5^\vee(k^\times))$.

For $1\le i\le \ell$ set $t_i:=({\rm d}_e\varpi_i^\vee)(1)$ and write $s_i$ for the simple reflection in the Weyl group $W=N_G(T)/T$ corresponding to the simple root $\alpha_i\in\Pi$. The toral elements $t_1,t_2,\ldots, t_7$ span $\Lie(T)$. Since $h\otimes 1$ is a toral element of $\Lie(T)$ and $\Lie(\varpi_5^\vee(k^\times))=k t_5$ we have that $\frac{1}{2}(h\otimes 1)= \frac{1}{2}h_\tau+r t_5$ for some $r\in \F_5$. It is immediate from \cite[p.~104]{LT11} that $\frac{1}{2}h_\tau=t_1+t_2+t_3+t_4-3t_5$. Since $p=5$, direct computations shows that $s_2s_4s_3s_1(\textstyle{\frac{1}{2}}h_\tau)\,=\, \textstyle{\frac{1}{2}}h_\tau+3t_5.$ As $s_2s_4s_3s_1$ fixes $t_5$, this yields that all elements in the set
$\frac{1}{2}h_\tau+\F_5\cdot t_5$ are conjugate under the action of $W$.
But then $h\otimes 1$ and $h_\tau$ have isomorphic centralisers. Since it is immediate from \cite[p.~403]{Car} that the centraliser of $h_\tau$ has type ${\rm D}_4{\rm A}_1$, this contradicts Lemma~\ref{socle}(i). We thus conclude that when $p=5$ the $\sl_2$-triple $\{e\otimes 1,h\otimes 1,f\otimes 1\}$  is standard and statement
(ii) holds.

Next suppose that $G$ is of type ${\rm E}_7$ and $p=7$. Since in this case $\dim \g_{e\otimes 1} \ge 49$ by Lemma~\ref{sl2}, analysing the weighted Dynkin diagrams in \cite[p.~403]{Car} shows that $e\otimes 1\not\in\OO({\rm A}_6)$ and the inequality (\ref{high}) holds for $\Delta'(e\otimes 1)$. Moreover, there is a unique weighted Dynkin diagram $\Delta'$ for which the centraliser of $h_\tau$ has type ${\rm A}_6$ and the corresponding nilpotent orbit has Dynkin label ${\rm A}_2{{\rm A}_1}^3$. This proves (iii) and shows that the $\sl_2$-triple $\{e\otimes 1,h\otimes 1,f\otimes 1\}$  is standard in type ${\rm E}_7$.

Now suppose $G$ is of type ${\rm E}_8$. In this case, $p=7$ and $\g_{h\otimes 1}$ has type ${\rm A}_6{\rm A}_1$. Since $\dim \g_{e\otimes 1}\ge 52$ by Lemma~\ref{sl2},
it follows from \cite[p.~406]{Car} that the orbit $\OO(e\otimes 1)$ is not of type ${\rm A}_6{\rm A}_r$ for $r\ge 0$. So the $\sl_2$-triple
$\{e\otimes 1,h\otimes 1,f\otimes 1\}$  must be standard.
Analysing the weighted Dynkin diagrams in \cite[pp.~405, 406]{Car} shows that either (\ref{high}) holds for $\Delta'(e\otimes 1)$ or the orbit $\OO(e\otimes 1)$ has one of the following labels:
$$  {\rm A}_4{\rm A}_1,\,
{\rm D}_5({\rm a}_1),\,
{\rm A}_5,\,{{\rm A}_3}^2,\,
{\rm D}_5({\rm a}_1){\rm A}_1,\,
{\rm A}_4{\rm A}_2{\rm A}_1,\,{\rm D}_4{\rm A}_1,\,
{\rm A}_4{{\rm A}_1}^2.$$
The first three labels cannot occur since in each of them the root system of $\g(\tau,0)$ contains a subsystem of type ${\rm D}_4$. The fourth label cannot occur either since a root system of type ${\rm A}_6{\rm A}_1$ does not contain subsystems of type ${{\rm A}_3}^2$. The label ${\rm D}_5({\rm a}_1){\rm A}_1$ cannot occur because $e_{\scriptstyle{1\atop }\!{3\atop }\!{5\atop 2}\!{4\atop }\!{3\atop }\!{2\atop }\!{1\atop }}$ commutes with $h_\tau$ and hence
the root system of $\g(h_\tau,\bar{0})$ contains a subsystem of type ${\rm A}_4{\rm A}_3$, forcing $\g(h_\tau,\bar{0})\not\cong\g_{h\otimes 1}$.
The label ${\rm A}_4{\rm A}_2{\rm A}_1$ cannot occur since $e_{\scriptstyle{2\atop }\!{4\atop }\!{5\atop 2}\!{4\atop }\!{3\atop }\!{2\atop }\!{1\atop }}$ commutes with $h_\tau$. This implies that the root system of $\g(h_\tau,\bar{0})$ contains a subsystem of type ${\rm A}_4{\rm A}_2{\rm A}_1$ which is not the case for the root system of $\g_{h\otimes 1}$. The label ${\rm A}_4{{\rm A}_1}^2$ cannot occur for the same reason: $e_{\scriptstyle{2\atop }\!{4\atop }\!{6\atop 3}\!{4\atop }\!{3\atop }\!{2\atop }\!{1\atop }}$ commutes with $h_\tau$ and hence the root system of $\g(h_\tau,\bar{0})$ contains a subsystem of type ${\rm A}_4{\rm A}_2{\rm A}_1$.

Finally, suppose $e\otimes 1\in\OO({\rm D}_4{\rm A}_1)$. By
\cite[p.~405]{Car},
$h_\tau$ is conjugate to $h'_\tau:=t_2+t_7+2t_8$ under the action of the Weyl group $W=N_G(T)/T$.
Using \cite[Planche~VII]{Bour} (and the fact that $p=7$) it is straightforward to see that the roots
$\alpha_1,\alpha_3,\alpha_4,\alpha_5,\alpha_6$ and ${\scriptstyle{1\atop }\!{3\atop }\!{5\atop 3}\!{4\atop }\!{3\atop }\!{2\atop }\!{1\atop }}$
form a basis of simple roots of the root system of $\g(h'_\tau,\bar{0})$ with respect to $T$. So the present case cannot occur and hence (\ref{high}) holds for $\Delta'(e\otimes 1)$. Looking once again at the weighted Dynkin diagrams
in \cite[p.~405]{Car} for which (\ref{high}) holds one finds out that
$\OO({\rm A}_2{{\rm A}_1}^3)$ is the only
nilpotent orbit in $\g$ for which $\g(\tau,0)=\g(h_\tau,\bar{0})$ has type ${\rm A}_6{\rm A}_1$. This proves (iii).

It remains to show that $S\cong\sl_2$. So suppose the contrary. Then $S$ may be identified with $\Der(\OO(1;\underline{1}))$ by Lemma~\ref{socle}(ii). The $k$-algebra $\OO(1;\underline{1})$ is generated by an element $x$ with $x^p=0$ and $S$ is spanned by the derivations $x^i\partial$ with $0\le i\le p-1$.
We may choose $\{e,h,f\}\subset S$ so that
$f=\partial$, $h=2x\partial$ and $e=x^2\partial$. Let $u=x^{p-1}\partial$. Then $[e,u]=0$ and $[h,u]=2(p-2)u=-\bar{4}u$. Since we have already proved that the $\sl_2$-triple $\{e\otimes 1,\h\otimes 1,f\otimes 1\}\subset \g$ is standard we may assume further that $h\otimes 1=h_\tau$.
Since $e\otimes 1\in \OO({\rm A}_3{\rm A}_2{\rm A}_1)$ if $p=5$ and $e\otimes 1\in \OO({\rm A}_2{{\rm A}_1}^3)$ if $p=7$, it follows from \cite[pp.~97, 105]{LT11} that
$$\g_{e\otimes 1}\cap \g(h_\tau, -\bar{4})\,=\,\g_{e\otimes 1}(p-4)
\,=\,\{0\}$$ if $G$ is of type ${\rm E}_7$.
Since
$u\otimes 1\in \g_{e\otimes 1}(-\bar{4})$, we conclude that $G$ is a group of type ${\rm E}_8$.
In this case we may assume that
$e\otimes 1=\,\sum_{\,i\ne 4,6,8}\,e_{\alpha_i}$.
By \cite[p.~127]{LT11}, $u\otimes 1\in \g_{e\otimes 1}(3)$ and the $C_{e\otimes 1}$-module $\g_{e\otimes 1}(3)$ is generated by
$e_{\scriptstyle{2\atop }\!{4\atop }\!{5\atop 3}\!{4\atop }\!{3\atop }\!{2\atop }\!{1\atop }}$ as in the characteristic zero case. From this it follows that $(\ad\,(f\otimes 1))^4(x)=0$ for all $x\in\g_{e\otimes 1}(3)$.
Since this contradicts the fact that $(\ad\,\partial)^4(x^6\partial)\ne 0$, we now deduce that the case  where $S\cong W(1;\underline{1})$ is impossible. Then $S\cong \sl_2$ by Lemma~\ref{socle}(ii) and our proof is complete.
\end{proof}
\begin{cor}\label{nonmax} If $G$ is of type ${\rm E}_8$ and $\h$ is as in Proposition~\ref{S}, then there exists an involution $\sigma\in G$ such that $G^\sigma$ is of type ${\rm E}_7{\rm A}_1$ and $\h\subset\g^\sigma$. In particular, $\h$ is not maximal in $\g$.
\end{cor}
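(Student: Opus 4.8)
Since $G$ is of type ${\rm E}_8$, Proposition~\ref{S} gives $p=7$, $S\cong\sl_2$, $e\otimes 1\in\OO({\rm A}_2{{\rm A}_1}^3)$, and the $\sl_2$-triple $\{e\otimes 1,h\otimes 1,f\otimes 1\}$ is standard; as $\h$ is indecomposable, Lemma~\ref{socle} yields $\c_\h(I)=0$, so $\soc(\h)=I\cong\sl_2\otimes\OO(1;\underline 1)$ and $\h\cong(\sl_2\otimes\OO(1;\underline 1))\rtimes(\Id\otimes\mathcal D)$ for a transitive $\mathcal D\subseteq W(1;\underline 1)$, whence $\dim\h\le 4p=28$. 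I would first fix the configuration exactly as in the proof of Proposition~\ref{S}: conjugate so that $e\otimes 1=\sum_{i\in\{1,2,3,5,7\}}e_{\alpha_i}$ and $h\otimes 1=h_\tau\in\Lie(T)$ for the associated cocharacter $\tau$, so that $e\otimes 1$ is regular in the subsystem Levi subalgebra $\l_0$ with simple roots $\{\alpha_i\mid i\in\{1,2,3,5,7\}\}$ (type ${\rm A}_2{{\rm A}_1}^3$). Let $M_1$ be the subsystem subgroup of $G$ with root system $\Phi\cap\Z\{\alpha_1,\dots,\alpha_7\}$ (of type ${\rm E}_7$), let $M_2=\langle U_{\tilde\alpha},U_{-\tilde\alpha}\rangle\cong{\rm A}_1$ (recall $\tilde\alpha\perp\{\alpha_1,\dots,\alpha_7\}$), and set $M=M_1M_2$. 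Since $p\ne 2$, the involution $\sigma:=\tilde\alpha^\vee(-1)$ has $C_G(\sigma)=M$ of type ${\rm E}_7{\rm A}_1$, and $\g=\g^\sigma\oplus\g^{-\sigma}$ with $\g^\sigma=\Lie(M)=\Lie(M_1)\oplus\Lie(M_2)$ and $\g^{-\sigma}\cong L_{{\rm E}_7}(\varpi_7)\otimes L_{{\rm A}_1}(\varpi_1)$ of dimension $112$.

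The goal is to prove $\soc(\h)\subseteq\Lie(M_1)$; granting this, the corollary follows quickly. Since $\h$ normalises $\soc(\h)$ and $[\g^\sigma,\g^{-\sigma}]\subseteq\g^{-\sigma}$, we have $\n_\g(\soc(\h))=\n_{\g^\sigma}(\soc(\h))\oplus\c_{\g^{-\sigma}}(\soc(\h))$; here $\Lie(M_2)$ centralises $\Lie(M_1)\supseteq\soc(\h)$, and $\c_{\g^{-\sigma}}(\soc(\h))=\c_{L_{{\rm E}_7}(\varpi_7)}(\soc(\h))\otimes L_{{\rm A}_1}(\varpi_1)$. The kernel of the action of $\soc(\h)=\sl_2\otimes\OO(1;\underline 1)$ on $L_{{\rm E}_7}(\varpi_7)$ is an ideal, hence of the form $\sl_2\otimes\OO(1;\underline 1)_{(j)}$, and as $e\otimes 1$ acts nonzero it is contained in $\sl_2\otimes\OO(1;\underline 1)_{(1)}$; a direct check of $L_{{\rm E}_7}(\varpi_7)$ restricted to the ${\rm A}_1$-subgroup associated with $e\otimes 1$ (which lies in $M_1$) shows $\soc(\h)$ has no nonzero fixed vector there, so $\c_{\g^{-\sigma}}(\soc(\h))=0$. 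Then $\h\subseteq\n_\g(\soc(\h))\subseteq\g^\sigma=\Lie(M)$, and since $\dim\h\le 28<\dim\Lie(M)=136<248=\dim\g$, the subalgebra $\h$ is not maximal in $\g$.

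To prove $\soc(\h)\subseteq\Lie(M_1)$: as $\sl_2\otimes\OO(1;\underline 1)$ is generated as a Lie algebra by $\{e\otimes 1,f\otimes 1,e\otimes x,f\otimes x\}$ and $\Lie(M_1)$ is a subalgebra of $\g$, it suffices to place these four elements in $\Lie(M_1)$. We have $e\otimes 1\in\l_0\subseteq\Lie(M_1)$ and $h\otimes 1=h_\tau\in\Lie(T)\cap\l_0\subseteq\Lie(M_1)$; moreover $f\otimes 1\in\l_0$ because in the construction of standard $\sl_2$-triples recalled in \S~\ref{3.4} the element $f$ attached to $e\otimes 1$ (regular in $\l_0$) lies in $\Lie(\D L_0)\subseteq\l_0$. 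For the remaining generators: since $\tilde\alpha\perp\{\alpha_1,\dots,\alpha_7\}$ and $h_\tau\in\Lie(T)\cap\Lie(M_1)$ we get $\langle\tilde\alpha,h_\tau\rangle=0$, so $\Lie(M_2)\subseteq\g(h_\tau,\bar 0)$ by (\ref{h-tau}), whereas $e\otimes x\in\g(h_\tau,\bar 2)$ and $f\otimes x\in\g(h_\tau,\bar 5)$; hence it is enough to show $e\otimes x,f\otimes x\in\g^\sigma$. Both lie in $\g_{e\otimes 1}$, resp.\ in the centraliser of $f\otimes 1$, and within those centralisers they are pinned down by the esdp relations --- e.g.\ $[f\otimes 1,e\otimes x]=-h\otimes x$, $(\ad (f\otimes 1))^2(e\otimes x)\ne 0$, $(\ad (f\otimes 1))^3(e\otimes x)=0$ --- together with the action of $\mathcal D$. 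Using the description (from \cite{LT11}, as in the proof of Proposition~\ref{S}) of the $\Z$-grading $\g_{e\otimes 1}=\bigoplus_{i\ge 0}\g_{e\otimes 1}(i)$ and of the $\c_{e\otimes 1}$-module structure of its graded pieces for the orbit ${\rm A}_2{{\rm A}_1}^3$ in ${\rm E}_8$, I would verify that the graded pieces which can carry $e\otimes x$ (and likewise those relevant to $f\otimes x$) are supported on roots of $\Phi\cap\Z\{\alpha_1,\dots,\alpha_7\}$, hence lie in $\Lie(M_1)$.

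This last step is the technical heart and the expected main obstacle: one must go through the Lawther--Testerman data for $\g_{e\otimes 1}$ and check that the esdp constraints genuinely force $e\otimes x$ and $f\otimes x$ into $\Lie(M_1)$, handling separately the few possibilities for $\mathcal D$ from Lemma~\ref{tran} when translating these constraints. An alternative, if it is available from the type-${\rm E}_7$ analysis, is to invoke conjugacy of socles of esdp's inside $\Lie(M_1)\cong{\rm E}_7$: since $e\otimes 1\in\Lie(M_1)$ is regular in a Levi of type ${\rm A}_2{{\rm A}_1}^3$ of $M_1$, the subalgebra $\soc(\h)$ would be $C_G(e\otimes 1)$-conjugate to the standard esdp socle contained in $\Lie(M_1)$, giving $\soc(\h)\subseteq\Lie(M_1)$ at once.
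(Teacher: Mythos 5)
Your overall skeleton matches the paper's: the involution you choose, $\sigma=\tilde\alpha^\vee(-1)$, induces the same automorphism of $\g$ as the paper's $\sigma=\tau(-1)$ (both fix exactly the root spaces $\g_\beta$ with the $\alpha_8$-coefficient of $\beta$ even), the fixed subgroup is the subsystem group of type ${\rm E}_7{\rm A}_1$, and non-maximality follows once $\h\subset\g^\sigma$. The problem is that the one step that actually matters --- placing ${\rm Soc}(\h)$ inside $\g^\sigma$ --- is not proved: you reduce it to checking that the graded pieces of $\g_{e\otimes 1}$ which "can carry" $e\otimes x$ and $f\otimes x$ are supported on ${\rm E}_7$-roots, defer that check to the Lawther--Testerman tables, and explicitly flag it as the unresolved technical heart. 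As written this is a genuine gap, and the secondary computation you also leave open ($\c_{\g^{-\sigma}}({\rm Soc}(\h))=0$ via the $56$-dimensional module) is only needed because you aim at the stronger, unnecessary containment ${\rm Soc}(\h)\subseteq\Lie(M_1)$ rather than ${\rm Soc}(\h)\subseteq\g^\sigma$.

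The missing observation that closes the gap in two lines is the one the paper uses: take $\sigma=\tau(-1)$ for the optimal cocharacter $\tau$ with $h\otimes 1=h_\tau$. Since inequality (\ref{high}) holds for the orbit ${\rm A}_2{{\rm A}_1}^3$ in type ${\rm E}_8$, formula (\ref{h-tau}) gives $\g(h_\tau,\pm\bar 2)=\g(\tau,\pm2)$ and $\g(h_\tau,\bar 0)=\g(\tau,0)$. You already noted that $e\otimes\OO(1;\underline 1)\subseteq\g(h_\tau,\bar 2)$ and $f\otimes\OO(1;\underline 1)\subseteq\g(h_\tau,-\bar 2)$ (these are literal $\ad(h\otimes 1)$-eigenspace statements), so the socle sits in $\g(\tau,-2)\oplus\g(\tau,0)\oplus\g(\tau,2)$, on which $\tau(-1)$ acts trivially; and ${\rm Id}\otimes\mathcal D$ centralises $e\otimes 1$ and $h_\tau$, hence lies in $\c_{e\otimes 1}\subseteq\g(\tau,0)\subseteq\g^\sigma$. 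No inspection of the individual graded pieces of $\g_{e\otimes 1}$, and no analysis of $\mathcal D$ case-by-case, is required. Finally, your proposed "alternative" via conjugacy of esdp socles inside $\Lie(M_1)$ is not legitimately available here: that uniqueness statement (\S~\ref{3.7}) is proved only for esdp's of a Lie algebra of type ${\rm E}_7$ under the action of the ${\rm E}_7$-group, is established \emph{after} this corollary (which is used precisely to remove the ${\rm E}_8$ case), and in any event presupposes that the socle already lies in $\Lie(M_1)$ --- which is what you are trying to prove.
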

\begin{proof} Since $G$ is of type ${\rm E}_8$, we have $p=7$.
By Proposition~\ref{S}, the $\sl_2$-triple $\{e\otimes 1,h\otimes 1,f\otimes 1\}$ is standard and $e\otimes 1\in\OO({\rm A}_1{{\rm A}_1}^3)$. So we may assume that $e\otimes 1=\,\sum_{\,i\ne 4,6,8}\,e_{\alpha_i}$ and
$h\otimes 1=h_\tau$ where the optimal cocharacter $\tau\in X_*(T)$ is as in \cite[p.~127]{LT11}. Let $\sigma=\tau(-1)$. Using {\it loc.\,cit.} it is straightforward to see that a root element $e_\beta\in \g^\sigma$ if and only if
$\beta=\sum_{\,i=1}^8\,m_i\alpha_i$ and $m_8$ is even.
From this it is immediate that $G^\sigma$ is a group of type ${\rm E}_7{\rm A}_1$. Clearly
$e\otimes \OO(1;\underline{1})\subseteq \g(h_\tau,\bar{2})$ and $f\otimes \OO(1;\underline{1})\subseteq \g(h_\tau,-\bar{2})$. As
 $\big[e\otimes \OO(1;\underline{1}),f\otimes \OO(1;\underline{1})\big]=h\otimes \OO(1;\underline{1})$ and
 $\g(h_\tau,\pm \bar{2})=\g(\tau,\pm 2)$ by \cite[p.~405]{Car}, we have that $S\otimes\OO(1;\underline{1})\subset \g^\sigma$. As
$$\h=(S\otimes \OO(1;\underline{1}))\rtimes ({\rm Id}\otimes\mathcal{D})\subseteq (S\otimes \OO(1;\underline{1}))+\c_{e\otimes 1}$$
and $\c_{e\otimes 1}=\Lie(C_{e\otimes 1})\subset \g(\tau,0)\subset\g^\sigma$ we obtain $\h\subset \g^\sigma$, as claimed.
\end{proof}

\subsection{The existence of $\h$}\label{3.6}
 The goal of this subsection is to give explicit examples of exotic semidirect products $\h\cong (\sl_2\otimes \OO(1;\underline{1}))\rtimes({\rm Id}\otimes \mathcal{D})$. In view of our discussion in \ref{3.5} we shall assume that
$G$ is of type ${\rm E}_7$ and $p\in\{5,7\}$. The notation introduced in the previous subsections will be used without further notice. We write $U^+$ and $U^-$ for the maximal unipotent subgroups of $G$ generated by the root subgroups $U_\gamma$ with $\gamma\in\Phi^+(\Pi)$ and $\gamma\in-\Phi^+(\Pi)$, respectively.
Combining Proposition~\ref{S} with \cite[p.~403]{Car}
one observes that $(\ad (e\otimes 1))^{p-1}=(\ad (f\otimes 1))^{p-1}=0$. In view of Remark~\ref{R0} this implies that $\g$ is a completely reducible
$\ad(S\otimes 1)$-module. As a consequence, $(\ad (f\otimes 1))^{3}$ annihilates
$\g_{\,e\otimes 1}(2)$.

We first suppose that $p=5$. In view of Proposition~\ref{S} we may assume that $e\otimes 1=\sum_{\,i\ne 4}\,e_{\alpha_i}$ and $h\otimes 1=h_\tau$  where $\tau\colon\,k^\times\to T$ is as in
\cite[p.~104]{LT11}.
The group $C_e$ is connected of type
${\rm A}_1$ and contains $T_0:=\varpi^\vee_4(k^\times)$ as a maximal torus.
The group $C_{e\otimes 1}^+:=C_{e\otimes 1}\cap U^+$ is normalised by $T_0$ and $B_{e\otimes 1}^+:=T_0\cdot C_{e\otimes 1}^+$ is a Borel subgroup of $C_{e\otimes 1}$.
In the notation of \cite{LT11} the maximal unipotent subgroup $C_{e\otimes 1}\cap U^-$ of $C_{e\otimes 1}$ consists of all $x_{-\beta_1}(t)$ with $t\in k$. (Here $-\beta_1$ is not a root of $G$ with respect to $T$.)
By the theory of rational $\SL(2)$-modules,
there exist nilpotent endomorphisms $X^{(i)}$ of $\g$ such that
$$\big(\Ad\,x_{-\beta_1}(t)\big)(v)\,=\,\textstyle{\sum}_{i\ge 0}\,t^iX^{(i)}(v)\qquad\ \big(\forall\,v\in\g\big)$$ (these endomorphisms may be different from those in
Proposition~\ref{expo}).
Each $X^{(i)}(v)$ is a weight vector of weight $2i$ for the action of $T_0$ on
$\End(\g)$. Furthermore, there exists a generator $X$ of $\Lie(C_{e\otimes 1}\cap U^-)$ such that
$X^{(i)}=\frac{1}{i!}(\ad X)^i$ for $0\le i\le p-1$.

As mentioned in \cite[p.~105]{LT11}, in characteristic $5$ the $C_{e\otimes 1}$-submodule
generated by $e_{\tilde{\alpha}}\in\g_{\,e\otimes 1}(2)$ is contained in
the $10$-dimensional indecomposable summand of $\g_{e\otimes 1}(2)$ isomorphic to
to the tilting module $T_{{\rm A}_1}(8)$.
The other indecomposable summand of the $15$-dimensional $C_{e\otimes 1}$-module $\g_{\,e\otimes 1}(2)$ is isomorphic to the Steinberg module $V(4)$.
Since $T_{{\rm A}_1}(8)$ is a projective module over the restricted enveloping algebra of $\sl_2$ by \cite[\S~2]{Do},
the vector space $\g_{X}\cap \g_{\,e\otimes 1}(2)$ is $3$-dimensional with  basis consisting of $T_0$-weight vectors of weight $-8$, $-4$ and $0$. It follows that the fixed point space $(\g_X\cap\g_{\,e\otimes 1}(2))^{T_0}$ is spanned by $e\otimes 1$.
On the other hand, since the tilting module $T_{{\rm A}_1}(8)$ admits a Weyl filtration, the $C_{e\otimes 1}$-submodule of $\g_{\,e\otimes 1}(2)$  generated by the highest weight vector $e_{\tilde{\alpha}}$ of weight $8$ for $B_{e\otimes 1}^+$ is isomorphic to the Weyl module $V(8)$. Since $X^{[5]}=0$ we have that $(\ad\,X)^4(e_{\tilde{\alpha}})\in (\g_X\cap\g_{\,e\otimes 1}(2))^{T_0}$, implying $k(e\otimes 1)\,=\, k(\ad\,X)^4(e_{\tilde{\alpha}})$.

Let $A$ denote the $k$-span of all $(\ad\,X)^i(e_{\tilde{\alpha}})$ with
$0\le i\le 4$. By construction, this is a $B_{e\otimes 1}^+$-submodule of
$\g_{\,e\otimes 1}(2)$ stable under the action of $\ad X$. As $\c_{e\otimes 1}=kX\oplus \Lie(B_{e\otimes 1}^+)$, it follows that
$A$ is a $\c_{e\otimes 1}$-submodule of $\g$. Since it is immediate from \cite{LT11} that $A\subset \Lie(U^+)$ we have that $[e_{\tilde{\alpha}}, A]=0$.
By the Jacobi identity, $$\big[(\ad\,X)^i(e_{\tilde{\alpha}}),(\ad\,X)^j(e_{\tilde{\alpha}})\big]\,\subseteq\,
\textstyle{\sum}_{k\le i}\,(\ad\,X)^k\big([e_{\tilde{\alpha}}, A]\big)=0.$$
Therefore, $A$ is an abelian subalgebra of $\g$. Since
$(\ad(f\otimes 1))^4=0$ and
$(\ad(f\otimes 1))^3(A)=0$, the Jacobi identity also yields that
$(\ad (f\otimes 1))^2(A)$ is a $5$-dimensional commutative subalgebra of $\g$ contained in $\g_{\,f\otimes 1}(-2)$. Since $f\otimes 1\in \sum_{\,i\ne 4}\,ke_{-\alpha_i}$ it must be that $[f\otimes 1,e_{\tilde{\alpha}}]\in
k^\times
e_{\scriptstyle{1\atop}\!{3\atop }\!{4\atop 2}\!{3\atop }\!
{2\atop }\!{1\atop }}$. Arguing as before it is now straightforward to see that
$[f\otimes 1,A]$ is a $5$-dimensional abelian subalgebra of
$\g(\tau,0)$ contained in $k(h\otimes 1)\oplus\, \Lie(U^+)$. Therefore,
$[[f\otimes 1,A],e_{\tilde{\alpha}}]=[k(h\otimes 1),e_{\tilde{\alpha}}]=k e_{\tilde{\alpha}}$. Applying $(\ad X)^i$ with $i\ge 0$ to both sides of this equality  one obtains that $[[f\otimes 1,A],A]=A$.
Since $[f\otimes 1,A]$ is abelian, this yields
$$\big[(\ad (f\otimes 1))^2(A),A\big]=[f\otimes 1,A]\ \, \mbox { and }\ \,  \big[(\ad (f\otimes 1))^2(A),[f\otimes 1,A]\big]=(\ad(f\otimes 1))^2(A).$$
As a consequence, $\tilde{A}:=A\oplus [f\otimes 1,A]\oplus (\ad (f\otimes 1))^2(A)$ is a $15$-dimensional Lie subalgebra of $\g$ containing $S\otimes 1$ and normalised by $\c_{e\otimes 1}$. Set $\h=\c_{e\otimes 1}\oplus \tilde{A}$ and
\begin{equation}
\label{nilA}
\tilde{A}^+:=\Lie(U^+)\cap\Big((\ad (f\otimes 1))^2(A)\oplus [f\otimes 1,A]\Big)\oplus\big([\Lie(U^+),\Lie(U^+)]\cap A\big).\end{equation} Then $\tilde{A}^+\subset \N(\g)$ and $\tilde{A}=(S\otimes 1)\oplus \tilde{A}^+$. Since $[\tilde{A},\tilde{A}^+]\subseteq \tilde{A}^+$, it follows that $\tilde{A}^+={\rm nil}(\tilde{A})$ and $\tilde{A}/{\rm nil}(\tilde{A})\cong \sl_2$. On the other hand, if $I$ is a nonzero ideal of $\tilde{A}$ stable under the action of $\ad X$ then $I\cap(S\otimes 1)\ne 0$. But then $f\otimes 1\in I$ implying that $[f\otimes 1, A]\subseteq I$ and $(\ad (f\otimes 1))^2(A)\subseteq I$. Hence $A=[[f\otimes 1,A],e_{\tilde{\alpha}}]\subseteq I$ forcing $I=\tilde{A}$. This shows that the Lie algebra $\tilde{A}$ is derivation simple.
As $\c_{e\otimes 1}\cong \sl_2$ is simple, it must act faithfully on $\tilde{A}$.
In conjunction with Block's theorem \cite[Corollary~3.3.5]{Str04} this entails
that $\h\cong(\sl_2\otimes\OO(1;\underline{1}))\rtimes ({\rm Id}\otimes \mathcal{D})$ where $\mathcal{D}\cong\sl_2$.

Now suppose $p=7$. By Propositon~\ref{S}, we may assume that $e\otimes 1=\sum_{\,i\ne 4,6}\,e_{\alpha_i}$. This case is very similar to the previous one, the only complication being that $C_{e\otimes 1}$ is now a connected group of type ${\rm G}_2$. In the notation of \cite[p.~97]{LT11} it is generated by the unipotent one-parameter subgroups $x_{\pm \beta_i}(k)$ where $i=1,2$. Here
$\beta_1$ and $\beta_2$ are simple roots of $C_{e\otimes 1}$ with respect to the maximal torus $T_0$ of $C_{e\otimes 1}$ generated by $\varpi_i^\vee(k^\times)$ with $i\in\{4,6\}$. Note that $\beta_1$ is a short root of $\Phi(C_{e\otimes 1},T_0)$.

For $i=1,2$ we pick a nonzero element $F_i$ in the Lie algebra of $x_{-\beta_i}(k)$. Then $F_1+F_2$ is a regular nilpotent element of $\c_{e\otimes 1}$
Let $\tilde{\beta}$ be the highest root of $\Phi(C_{e\otimes 1},T_0)$ with respect
to its basis of simple roots $\{\beta_1,\beta_2\}$ and let $\tilde{E}$ be a root vector of $\c_{e\otimes 1}$ corresponding to $\tilde{\beta}=3\beta_1+2\beta_2$.
As $p=7$, the Lie subalgebra $\mathfrak{w}$ of $\c_{e\otimes 1}$ generated by $F_1+F_2$ and $\tilde{E}$ is isomorphic to the Witt algebra $W(1;\underline{1})$; see \cite[Lemma~13]{P85} or \cite[Lemma~3.6]{HSMax}.
Moreover, one can choose an isomorphism $\psi\colon,\mathfrak{w}
\stackrel{\sim}{\longrightarrow}\,W(1;\underline{1})$ such that
$\psi(F_1+F_2)=\partial$ and $\psi(\tilde{E})\in k(x^6\partial).$
For $i\ge -1$, we denote by $\mathfrak{w}_{(i)}$ the preimage under $\psi$ of the $i$th member of the canonical filtration of $W(1;\underline{1})$.
By construction, $\mathfrak{w}_{(1)}\subset \Lie(U_+)$.

It follows from \cite[p.~97]{LT11} that $F_1+F_2$ is a weight vector of weight $-2$ with respect to the $1$-dimensional subtorus $T_1:=\,\big\{\varpi_4^\vee(c^2)\varpi_6^\vee(c^2)\,|\,\,c\in k^\times\big\}$ of $T_0$. The torus $T_1$ normalises $\mathfrak{w}$ and
$\Lie(T_1)=k \psi^{-1}( x\del)$. In the present case the Levi subgroup $C_G(\tau)$ acts rationally on a $7$-dimensional
vector space $V$ over $k$ and $\Lie(C_G(\tau))=\g(\tau,0)\cong \gl(V)$ as restricted Lie algebras. Then $\c_{e\otimes 1}\hookrightarrow \gl(V)$ and $V$ may be regarded as a faithful restricted $7$-dimensional $\c_{e\otimes 1}$-module. Since $C_{e\otimes 1}$ is a group of type ${\rm G}_2$, it must be that $V\cong L(\varpi_1)$ as $C_{e\otimes 1}$-modules (this is immediate from \cite{P88}, for example). Hence $F_1+F_2$ acts
on $V$ as a nilpotent Jordan block of size $7$; see \cite[p.~95]{P85} for more detail. We thus deduce that $F_1+F_2$ is a regular nilpotent element of $\g(\tau,0)$.

Since $\g$ is a completely reducible $\ad(S\otimes 1)$-module and $\g_{\,e\otimes1}(r)=0$ for $r\not\in\{0,\pm2,\pm4\}$ by \cite[p.~97]{LT11},
we have that \begin{equation}\label{dir}
\g(\tau,0)\,=\,\c_{e\otimes 1}\oplus[f\otimes 1,\g_{e\otimes 1}(2)]\oplus
[f\otimes 1,[f\otimes 1,\g_{e\otimes 1}(4)]].\end{equation}
Hence $[f\otimes 1,\g_{\,e\otimes 1}(2)]\cong \g_{\,e\otimes 1}(2)$ is a direct summand of the $(\Ad\, C_{e\otimes 1})$-module $\g(\tau,0)\cong V\otimes V^*$.
By \cite[p.~97]{LT11},  $\dim \g_{\,e\otimes1}(2)=28$ and $e_{\tilde{\alpha}}\in \g_{\,e\otimes1}(2)$ is a highest weight vector of weight $2\varpi_1$ for the Borel subgroup $B_{e\otimes 1}^+=T_0\cdot
(C_{e\otimes 1}\cap U^+)$ of $C_{e\otimes 1}$.
 The Weyl module $V(2\varpi_1)$ for $C_{e\otimes 1}$ has dimension $27$ by Weyl's dimension formula. Since
 in characteristic $7$ the irreducible $C_{e\otimes 1}$-module $L(2\varpi_1)$ has dimension $26$ by \cite[6.49]{Lu}, we have that ${\rm Ext}^1_{C_{e\otimes 1}}(L(2\varpi_1),k)\ne 0$, using \cite[II.2.14]{Jan03}.
 Since $\g(\tau,0)\cong V\otimes V^*$ is a tilting module for $C_{e\otimes 1}$, it follows from (\ref{dir}) that the $(\Ad\,C_{e\otimes 1})$-module
 $\g_{\,e\otimes 1}(2)$ is isomorphic to the tilting module $T_{{\rm G}_2}(2\varpi_1)$ (one should keep in mind here that $T_{{\rm G}_2}(2\varpi_1)$ admits both a good filtration and a Weyl filtration). We mention in passing that  $T_{{\rm G}_2}(2\varpi_1)\cong S^2(L(\varpi_1))$  (this will not be required in what follows).

 Since $V$ is a free module over the restricted enveloping algebra of $k(F_1+F_2)$, so are $V\otimes V^*$ and $[f\otimes 1,\g_{\,e\otimes1}(2)]$. This implies that
 $$\g_{\,e\otimes1}(2)\cap\g_{F_1+F_2}\,=\,(\ad(F_1+F_2))^6(\g_{\,e\otimes1}(2)).$$ Note that $e_{\tilde{\alpha}}$ has weight $12$ with respect to the action of $T_1$. Conversely, any $T_1$-weight vector of weight $12$ in $\g_{\,e\otimes1}(2)$ is proportional to
  $e_{\tilde{\alpha}}$ (here we use the fact that $\g_{\,e\otimes1}(2)\cong T_{{\rm G}_2}(2\varpi_1)$ as $C_{e\otimes 1}$-modules).
 This gives $e\otimes 1\in k(\ad(F_1+F_2))^6(e_{\tilde{\alpha}})$.

 We now let $A$ be the $k$-span of all $\ad(F_1+F_2)^i(e_{\tilde{\alpha}})$ with $0\le i\le 6$. Repeating  verbatim the argument used in the previous case we observe that $\tilde{A}:=A\oplus  [f\otimes 1, A]\oplus (\ad (f\otimes 1))^2(A)$ is a Lie subalgebra of $\g$ normalised by $\mathfrak{w}$. We then define $\tilde{A}^+$ as in (\ref{nilA}) and check directly that $\tilde{A}^+={\rm nil}(\tilde{A})$ and $\tilde{A}/{\rm nil}{\tilde{A}}\cong \sl_2$.

We set $\h=\mathfrak{w}\oplus\tilde{A}$.
If $I$ is a nonzero ideal of $\tilde{A}$ stable under the action of $\ad (F_1+F_2)$ then our earlier remarks imply that $I\cap(S\otimes 1)\ne 0$. But then $f\otimes 1\in I$ forcing $[f\otimes 1, A]\subseteq I$ and $(\ad (f\otimes 1))^2(A)\subseteq I$. Hence  $I=\tilde{A}$ as in the previous case. This shows that the Lie algebra $\tilde{A}$ is derivation simple.
As the Lie algebra $\mathfrak{w}\cong W(1;\underline{1})$ is simple, it acts faithfully on $\tilde{A}$.
Applying Block's theorem \cite[Corollary~3.3.5]{Str04} we get
$\h\cong(\sl_2\otimes\OO(1;\underline{1}))\rtimes ({\rm Id}\otimes \mathcal{D})$ where $\mathcal{D}\cong W(1;\underline{1})$.

\subsection{The uniqueness of ${\rm Soc}(\h)$}\label{3.7}



\def\arraystretch{0.5} \arraycolsep=0pt
In this subsection we assume that
$\h'= (S'\otimes\OO(1;\underline{1}))\rtimes(\rm Id\otimes \mathcal{D'})$ is an esdp of $\g$ where $\g$ is of type ${\rm E}_7$ and $p\in\{5,7\}$. Our goal is to show that ${\rm Soc}(\h')=S'\otimes\OO(1;\underline{1})$ is
conjugate to the socle of the subalgebra $\h$ described in \S~\ref{3.6}. This will imply that $\n_\g({\rm Soc}(\h'))=(\Ad\, g)(\h)$ for some $g\in G$. By Proposition~\ref{S}, we may assume that $S'\otimes 1=S\otimes 1$ is spanned by
the $\sl_2$-triple $\{e\otimes 1,h\otimes 1, f\otimes 1\}$ described in \cite[p.~104]{LT11} for $p=5$ and in \cite[p.~97]{LT11} for $p=7$.

To ease notation we identify ${\rm Id}\otimes\mathcal{D'}$ with $\mathcal D'$, a  Lie subalgebra of $W(1;\underline{1})$. Since $\h'$ is semisimple,
$\mathcal{D}'$ is not contained in the standard maximal subalgebra $W(1;\underline{1})_{(0)}$ of $W(1;\underline{1})$ (otherwise
$S\otimes\OO(1;\underline{1})$ would contain a nonzero nilpotent ideal of $\h'$).
In view of Lemma~\ref{tran}, we may assume further that either $D=\partial$ or $D=(1+x)\partial$. Unfortunately, this is the best one can say as it may well be that $\dim \mathcal{D}'=1$.

Let $\c_{e\otimes 1}=\Lie(C_{e\otimes 1})$. Since $D$ commutes with both $e\otimes 1$ and $h\otimes 1$ and $\g(h_\tau,\bar{0})=\g(\tau,0)$ in all cases of interest, we have that
$D\in \c_{e\otimes 1}$. Our first task will be to determine
 the conjugacy class of the subspace $k D$ under the adjoint action of $C_{e\otimes 1}$. For that we are going to use some properties of irreducible representations of the completely solvable Lie algebra $kD\ltimes(h\otimes\OO(1;\underline{1}))$.

\begin{lemma}\label{pindownd} Under the above assumptions on $\h'$, a scalar multiple of $D$ is contained in a standard regular $\sl_2$-triple of $\c_{e\otimes 1}$. In particular, the subspace $kD$ is unique up to $(\Ad\, C_{e\otimes 1})$-conjugacy.\end{lemma}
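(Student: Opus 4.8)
The plan is to analyse $D$ through the representation theory of the solvable Lie algebra $\b:=kD\ltimes(h\otimes\OO(1;\underline{1}))$ acting on suitable sections of $\g$, exploiting the fact that $h\otimes\OO(1;\underline{1})$ is a large abelian ideal whose elements are semisimple-plus-nilpotent in a controlled way. First I would record that since $e\otimes 1\in\OO({\rm A}_3{\rm A}_2{\rm A}_1)$ (if $p=5$) or $\OO({\rm A}_2{{\rm A}_1}^3)$ (if $p=7$), Proposition~\ref{S} and the tables of \cite{LT11} pin down $C_{e\otimes 1}$ as a connected group of type ${\rm A}_1$ (when $p=5$) or type ${\rm G}_2$ (when $p=7$), and that $D\in\c_{e\otimes 1}$ because $D$ centralises $e\otimes 1$ and $h\otimes 1=h_\tau$ while $\g(h_\tau,\bar 0)=\g(\tau,0)$. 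Since $D^{[p]}\in kD$ by the structure of $W(1;\underline 1)$, and $D\notin W(1;\underline 1)_{(0)}$, either $D^{[p]}=0$ or $D^{[p]}=D$; I want to rule out the toral case. If $D^{[p]}=D$ then $kD$ would be a $1$-dimensional torus in $\c_{e\otimes 1}$, and one computes the centraliser of such a torus in $\g$: it would contain $\g_{e\otimes 1}$ together with the full $\ad D$-fixed part of $\g$, which does not match $\c_\h(S\otimes\OO(1;\underline 1))$-considerations — more directly, a toral $D$ acting on $\sl_2\otimes\OO(1;\underline 1)$ with $D\cdot x^i\partial$-type weights forces $h\otimes\OO(1;\underline 1)$ to decompose into $\F_p$-eigenlines under $\ad D$, so $\ad_\g(D)$ is semisimple; but then $\g_{e\otimes 1}\cap\g_D$ has the wrong dimension against the eigenspace count coming from the $C_{e\otimes 1}$-module structure of $\g_{e\otimes1}(2)$ worked out in \S\ref{3.6}. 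Hence $D^{[p]}=0$, i.e. $\ad_\g D$ is nilpotent and $D\in\N(\c_{e\otimes 1})$.

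The core step is then to identify \emph{which} nilpotent orbit of $\c_{e\otimes 1}$ contains $D$. The key leverage is the module $V:=\g_{e\otimes 1}(2)$, which by \S\ref{3.6} is the tilting module $T_{{\rm A}_1}(8)$ plus a Steinberg summand (for $p=5$) or $T_{{\rm G}_2}(2\varpi_1)$ (for $p=7$), together with the fact that $D$ commutes with $e\otimes 1$ and hence acts on $V$. The transitivity of $\mathcal D'$ on $\OO(1;\underline 1)$ is equivalent to $D\notin W(1;\underline 1)_{(0)}$, which says that the action of $D$ on $h\otimes\OO(1;\underline 1)$ (a free rank-one $\OO(1;\underline 1)$-module) is a "translation-type" nilpotent operator, and in particular $(\ad_\h D)^{p-1}\neq 0$ on $S\otimes\OO(1;\underline 1)$. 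Since $S\otimes\OO(1;\underline1)\subseteq V\oplus(h\otimes\OO(1;\underline1))\oplus(f\otimes\OO(1;\underline1))$ sits inside $\g$ with $e\otimes\OO(1;\underline1)\subseteq V$, I would push this to the statement that $\ad_\g(D)$ has a Jordan block of size $\ge p-1$ on the $C_{e\otimes1}$-submodule of $V$ generated by $e_{\tilde\alpha}$; comparing with the list of nilpotent orbits of $\c_{e\otimes 1}$ and their Jordan types on $T_{{\rm A}_1}(8)$ respectively $T_{{\rm G}_2}(2\varpi_1)$ forces $D$ to be regular nilpotent in $\c_{e\otimes 1}$. (For $p=7$, being regular in ${\rm G}_2$ is exactly the condition that made $\mathfrak w\cong W(1;\underline 1)$ in \S\ref{3.6}; for $p=5$, regular nilpotent in ${\rm A}_1$ is the generic nonzero nilpotent, and here one must separately exclude that $D$ lies in the Steinberg summand's worth of extra centraliser — but any $D\in\c_{e\otimes1}$ acting nontrivially and nilpotently on the $T_{{\rm A}_1}(8)$-summand through a size-$\ge 4$ block is automatically a nonzero, hence regular, nilpotent of $\sl_2$.)

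Once $D$ is regular nilpotent in $\c_{e\otimes 1}$, the conclusion follows: in good characteristic the regular nilpotent elements of the reductive Lie algebra $\c_{e\otimes 1}=\Lie(C_{e\otimes 1})$ form a single orbit under $\Ad C_{e\otimes 1}$ (type ${\rm A}_1$ trivially; type ${\rm G}_2$ in characteristic $7>5$ by the standard results on nilpotent orbits in good characteristic), and each is embeddable in a standard $\sl_2$-triple of $\c_{e\otimes 1}$ — indeed $D^{[p]}=0$, so by \cite{McN04} there is a connected ${\rm A}_1$-subgroup of $C_{e\otimes 1}$ whose Lie algebra contains $D$, giving the required standard regular $\sl_2$-triple. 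Transporting by an element of $C_{e\otimes 1}$ then shows $kD$ is unique up to $(\Ad C_{e\otimes1})$-conjugacy, which is the assertion. The main obstacle I anticipate is the middle step: cleanly extracting, from "$\mathcal D'$ transitive", the precise Jordan-block statement for $\ad_\g D$ on the relevant tilting module, and making sure the bookkeeping across the three ambient modules $V$, $h\otimes\OO(1;\underline1)$, $f\otimes\OO(1;\underline1)$ does not lose a block — this is where the explicit weight computations of \S\ref{3.6} (the weight-$12$ resp. weight-$8$ highest vectors, and the projectivity of $T_{{\rm A}_1}(8)$ over the restricted enveloping algebra) will have to be invoked carefully rather than waved at.
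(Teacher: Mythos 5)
There is a genuine gap, and it is located exactly where you try to ``rule out the toral case.'' The case $D^{[p]}=D$ cannot be ruled out: by Lemma~\ref{tran}, a transitive subalgebra of $W(1;\underline{1})$ may well be $k(1+x)\partial$, which consists of semisimple elements, and then $D$ is a nonzero \emph{toral} element of $\c_{e\otimes 1}$. This situation really occurs for esdp's (the paper's subsequent case analysis in \S~\ref{3.7} devotes cases (ii) and (iv) to precisely this possibility), so the contradiction you sketch --- that a semisimple $\ad_\g D$ would force ``the wrong dimension'' for $\g_{e\otimes 1}\cap\g_D$ --- cannot exist. Relatedly, your assertion that transitivity of $\mathcal D'$ makes the action of $D$ on $h\otimes\OO(1;\underline{1})$ a ``translation-type nilpotent operator'' is false for $D=(1+x)\partial$, which acts diagonalisably on $\OO(1;\underline{1})$ with eigenvalues $0,1,\dots,p-1$. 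Note that the statement of the lemma does not require $D$ to be nilpotent: a scalar multiple of a suitable toral element also lies in a standard regular $\sl_2$-triple (as its semisimple member), and that is how the toral case is meant to be absorbed. Your proof must therefore split into the two subcases $D^{[p]}=0$ and $D^{[p]}=D$ and, in the latter, identify which toral lines of $\c_{e\otimes 1}$ can arise --- for $p=7$ this is a genuine computation with the $T_0$-weights of the $7$-dimensional $C_{e\otimes 1}$-module $L(\varpi_1)$, showing the eigenvalues of $D$ are forced to be all of $\F_7$ with multiplicity one and hence that $D$ is conjugate to $t_1+t_2$.

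For the nilpotent subcase your strategy is close in spirit to the paper's but technically weaker. The paper works not with $\g_{e\otimes 1}(2)$ but with the $7$-dimensional natural module $V$ of $\g(\tau,0)\cong\gl(V)$, on which the completely solvable algebra $\r=kD\ltimes(h\otimes\OO(1;\underline{1}))$ acts irreducibly; inducing from the abelian ideal $h\otimes\OO(1;\underline{1})$ shows $V$ is a free $u(kD)$-module of rank one, so a nilpotent $D$ has a single Jordan block of size $7$ on $V\cong L(\varpi_1)$ and is therefore regular in $\c_{e\otimes 1}$ by the Jordan-block tables. Your proposed route through Jordan blocks of $\ad_\g D$ on the $28$-dimensional tilting module $T_{{\rm G}_2}(2\varpi_1)$ would need the bookkeeping you yourself flag as delicate, and as stated the inference from ``$(\ad D)^{p-1}\neq 0$ on $S\otimes\OO(1;\underline{1})$'' to ``a Jordan block of size $\ge p-1$ on the $C_{e\otimes 1}$-submodule of $\g_{e\otimes 1}(2)$ generated by $e_{\tilde\alpha}$'' is not justified. (For $p=5$ the whole lemma is elementary since $\c_{e\otimes 1}\cong\sl_2$, in both the nilpotent and the toral subcase; the detour through the Steinberg summand is unnecessary.)
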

\begin{proof}
If $p=5$ then $\c_{e\otimes 1}\cong\sl_2$ by \cite[p.~104]{LT11}. In this case, the statement of the lemma is elementary.
So we assume from now that $p=7$.

Let   $\r:=kD\ltimes(h\otimes\OO(1\;\underline{1}))$. Since $h\otimes 1=h_\tau$ and
$[h\otimes 1,\r]=0$ we have that $\r\subset \g(h_\tau,\bar{0})$.
We have already mentioned in \S~\ref{3.6} that the restricted Lie algebra $\g(h_\tau,\bar{0})=\g(\tau,0)$ is isomorphic to $\gl(V)$ where $V$ is a $7$-dimensional vector space over $k$. Since $\h'$ is a restricted Lie subalgebra of $\g$, so is $\h'\cap \g(\tau,0)=
(h\otimes\OO(1;\underline{1}))\rtimes\mathcal{D}'$. It follows that
$\r$ is a restricted Lie subalgebra of $\g(\tau,0)$ and
$\r':= h\otimes\OO(1\;\underline{1})$ is a restricted abelian ideal of  $\r$.
Since $D\in\{\partial, (1+x)\partial\}$, there is an $x\in\r'$ such that $[D,x]^p=h\otimes 1$.
As $k(h\otimes 1)$ acts on $V$ by scalar multiplications, we see that the derived subalgebra of $\r$ does not act nilpotently on $V$.
Therefore, the $\r$-module $V$ has a composition factor of dimension $>1$.
Since $\r$ is completely solvable and $\dim V=p$ it follows from \cite[Corollary~3.3.9]{Str04}, for example, that $V$ is an irreducible restricted $\r$-module. As $\r'$ is abelian, $V$ contains a $1$-dimensional $\r'$-submodule, $V_0$ The irreducibility of the $\r$-module $V$ then implies that $V$ is a homomorphic image of the induced $\r$-module
$\widetilde{V}_0:=u(\r)\otimes_{u(\r')}V_0$. Since $\r'$ has codimension $1$ in $\r$ we have that $\dim \widetilde{V}_0=p\dim V_0=\dim V$. It follows that $V\cong \widetilde{V}_0$.
As $D\not\in\r'$, this implies that $V$ is a free $u(kD)$-module of rank $1$.
In other words, if $D^p=0$ then $D$ acts on $V$ as a nilpotent Jordan block of size $p$ and
if $D^p=D$ then the set of the eigenvalues of $D$ on $V$ equals $\F_p$ and all eigenvalues appear with multiplicity $1$.

Recall from \S~\ref{3.6} that $C_{e\otimes 1}$ has type ${\rm G}_2$ and
$V\cong L(\varpi_1)$ as $C_{e\otimes 1}$-modules. If $D^p=0$ then the above says that $D^6$ does not vanish on $V$. Applying \cite[Table~4]{Ste} we then deduce that $D$ lies in the regular nilpotent $C_{e\otimes 1}$-orbit of $\c_{e\otimes 1}$. In view of our discussion in \S~\ref{3.4} it clear now that the statement of the lemma holds when $D^p=0$.

If $D^p=D$ then $D$ is a toral element of $\c_{e\otimes 1}$ and
the commutative algebra $u(kD)$ is semisimple.
Furthermore, no generality will be lost by assuming that $D$ lies in the maximal toral subalgebra $\Lie(T_0)$ of $\c_{e\otimes 1}$. The set of all toral elements if $\Lie(T_0)$ is spanned over $\F_7$ by two elements, $t_1$ and $t_2$, such that
$\beta_i(t_j)=\delta_{ij}$ for $i,j\in\{1,2\}$ (the simple roots $\beta_1,\beta_2\in \Phi(C_{e\otimes1}, T_0)$ were introduced in \ref{3.6}).
Write $D=at_1+bt_2$ for some $a,b\in \F_7$.
As $\beta_1$ is a short root, we have that $\varpi_1=2\beta_1+\beta_2$.
It is well known that the set of $T_0$-weights of $L(\varpi_1)$ equals
$$\{\varpi_1,\,\varpi_1-\beta_1,\, \varpi_1-\beta_1-\beta_2,\,\varpi_1-2\beta_1-\beta_2,\,\varpi_1-
3\beta_1-\beta_2,\,
\varpi_1-3\beta_1-2\beta_2,\,\varpi_1-4\beta_1-2\beta_2\}.$$
Since each eigenvalue of $D$ on $V\cong L(\varpi_1)$ has multiplicity
$1$ we may assume after rescaling that $a=1$. Then the set of the eigenvalues of $D$ on $V$ equals
$\{2+b,1+b,1,0,-1,-1-b,-2-b\}.$ One can see by inspection that there are two values of $b\in\F_7$ for which this set consists of $7$ elements, namely
$b=1$ and $b=3$. As $\tilde{\beta}=3\beta_1+2\beta_2$ and
$({\rm d}_e\tilde{\beta}^\vee)(1)=t_2$ we have that $$s_{\tilde{\beta}}(t_1+t_2)=(t_1+t_2)-5t_2=t_1+3t_2.$$
This shows that $D$ is $(\Ad\,C_{e\otimes 1})$-congugate to $t_1+t_2$ completing the proof.
\end{proof}
We are now ready to prove that the socle $S\otimes\OO(1;\underline{1})$ of $\h'$ is conjugate under the adjoint action of the group $G$ to the socle of the esdp $\h$ constructed in \S~\ref{3.6}. This will imply that $\m:=\n_\g({\rm Soc}(\h'))$ is also unique up to $(\Ad\,G)$-conjugacy. Recall that ${\rm Soc}(\h)$ and ${\rm Soc}(\h')$ share the same $\sl_2$-triple $(e\otimes 1,h\otimes 1,f\otimes 1)$ described explicitly in \S~\ref{3.6}. In light of Lemma \ref{pindownd}, we also know that $D\in\h'$ is either a nilpotent or toral element in $\c_{e\otimes 1}=\Lie(C_{e\otimes 1})$
and its nonzero scalar multiple is contained in a regular
$\sl_2$-triple of  $\c_{e\otimes 1}$.

The group $C_{e\otimes 1}=G_{h\otimes 1}\cap G_{e\otimes 1}$ has type ${\rm A}_1$ (resp., ${\rm G}_2$) if $p=5$ (resp., $p=7$).
We shall treat each of four cases by turn, using GAP and imposing well-chosen relations between the elements of $kD$ and ${\rm Soc}(\h')$ to show that there is only one choice for ${\rm Soc}(\h')$ up to conjugacy by $G$.

Let $p=5$. Then $e\in\OO({\rm A_3}{\rm A}_2{\rm A}_1)$ and the following elements give an $\sl_2$-triple $\{e\otimes 1,h\otimes 1,f\otimes 1\}$ generating the subalgebra $S\otimes 1$ of ${\rm Soc}(\h')$:
\begin{align*}e\otimes 1\,&=\,e_{\Small\begin{array}{c c c c c c c c}1&0&0&0&0&0\\&&0\end{array}}+
e_{\Small\begin{array}{c c c c c c c c}0&0&0&0&0&0\\&&1\end{array}}+
e_{\Small\begin{array}{c c c c c c c c}0&1&0&0&0&0\\&&0\end{array}}+
e_{\Small\begin{array}{c c c c c c c c}0&0&0&1&0&0\\&&0\end{array}}+
e_{\Small\begin{array}{c c c c c c c c}0&0&0&0&1&0\\&&0\end{array}}+
e_{\Small\begin{array}{c c c c c c c c}0&0&0&0&0&1\\&&0\end{array}},\\
h\otimes 1\,&=\,2\cdot h_{\alpha_1}+
h_{\alpha_2}+
2\cdot h_{\alpha_3}+
3\cdot h_{\alpha_5}+
4\cdot h_{\alpha_6}+
3\cdot h_{\alpha_7},\\
f\otimes 1\,&=\,2\cdot e_{\Small\begin{array}{c c c c c c c c}-1&0&0&0&0&0\\&&0\end{array}}+
e_{\Small\begin{array}{c c c c c c c c}-0&0&0&0&0&0\\&&1\end{array}}+
2\cdot e_{\Small\begin{array}{c c c c c c c c}-0&1&0&0&0&0\\&&0\end{array}}+
3\cdot e_{\Small\begin{array}{c c c c c c c c}-0&0&0&1&0&0\\&&0\end{array}}+
4\cdot e_{\Small\begin{array}{c c c c c c c c}-0&0&0&0&1&0\\&&0\end{array}}+
3\cdot e_{\Small\begin{array}{c c c c c c c c}-0&0&0&0&0&1\\&&0\end{array}}.
\end{align*}

(i) Suppose $D^p=0$. Then $D$ is a regular nilpotent element of $\c_{e\otimes 1}$ (Lemma~\ref{pindownd}) and in view of \cite[p.~104]{LT11} we may assume that
\[D\,=\,3\cdot e_{\Small\begin{array}{c c c c c c c c}1&1&1&0&0&0\\&&1\end{array}}+
e_{\Small\begin{array}{c c c c c c c c}1&1&1&1&0&0\\&&0\end{array}}+
2\cdot e_{\Small\begin{array}{c c c c c c c c}0&1&1&1&0&0\\&&1\end{array}}+
e_{\Small\begin{array}{c c c c c c c c}0&0&1&1&1&0\\&&1\end{array}}+
e_{\Small\begin{array}{c c c c c c c c}0&1&1&1&1&0\\&&0\end{array}}+
e_{\Small\begin{array}{c c c c c c c c}0&0&1&1&1&1\\&&0\end{array}}.\]
Recall the maximal unipotent subgroup  $C_{e\otimes 1}^+$ of $C_{e\otimes 1}$ introduced in \S~\ref{3.6}. We will search for the element $e\otimes x\in {\rm Soc}(\h')$ and show up to the adjoint action of $C_{e\otimes 1}^+$ there is precisely one candidate with the choices we have already made. Since $[h\otimes 1,e\otimes x]=2\cdot e\otimes x$, and $\g_{e\otimes 1}\cap\g(h_\tau,\bar{2})\,=\,\g_{e\otimes 1}(2)$  it must be that $e\otimes x\in\g_{e\otimes 1}(2)$. A general element of $\g_{e\otimes 1}(2)$ is
\begin{align*}v\,=\,\,&x_3\cdot e_{\Small\begin{array}{c c c c c c c c}1&0&0&0&0&0\\&&0\end{array}}+
x_2\cdot e_{\Small\begin{array}{c c c c c c c c}0&0&0&0&0&0\\&&1\end{array}}+
x_3\cdot e_{\Small\begin{array}{c c c c c c c c}0&1&0&0&0&0\\&&0\end{array}}+
x_7\cdot e_{\Small\begin{array}{c c c c c c c c}0&0&0&1&0&0\\&&0\end{array}}+
x_7\cdot e_{\Small\begin{array}{c c c c c c c c}0&0&0&0&1&0\\&&0\end{array}}+
x_7\cdot e_{\Small\begin{array}{c c c c c c c c}0&0&0&0&0&1\\&&0\end{array}}\\
&+
(x_{30}+2\cdot x_{31})\cdot e_{\Small\begin{array}{c c c c c c c c}1&1&1&1&0&0\\&&1\end{array}}+
x_{31}\cdot e_{\Small\begin{array}{c c c c c c c c}1&1&1&1&1&0\\&&0\end{array}}+
(x_{30}+x_{31})\cdot e_{\Small\begin{array}{c c c c c c c c}0&1&1&1&1&0\\&&1\end{array}}+
x_{30}\cdot e_{\Small\begin{array}{c c c c c c c c}0&0&1&1&1&1\\&&1\end{array}}+
x_{31}\cdot e_{\Small\begin{array}{c c c c c c c c}0&1&1&1&1&1\\&&0\end{array}}\\
&+
(4\cdot x_{47}+x_{49})\cdot e_{\Small\begin{array}{c c c c c c c c}1&2&2&2&1&0\\&&1\end{array}}+
x_{47}\cdot e_{\Small\begin{array}{c c c c c c c c}1&2&2&1&1&1\\&&1\end{array}}+
x_{49}\cdot e_{\Small\begin{array}{c c c c c c c c}1&1&2&2&1&1\\&&1\end{array}}+
x_{49}\cdot e_{\Small\begin{array}{c c c c c c c c}0&1&2&2&2&1\\&&1\end{array}}\\
&+
4\cdot x_{59}\cdot e_{\Small\begin{array}{c c c c c c c c}1&2&3&2&2&1\\&&2\end{array}}+
x_{59}\cdot e_{\Small\begin{array}{c c c c c c c c}1&2&3&3&2&1\\&&1\end{array}}+
x_{63}\cdot e_{\Small\begin{array}{c c c c c c c c}2&3&4&3&2&1\\&&2\end{array}}\\
&+
(2\cdot x_{80}+4\cdot x_{81})\cdot e_{\Small\begin{array}{c c c c c c c c}-1&1&1&0&0&0\\&&0\end{array}}+
(4\cdot x_{80}+x_{81})\cdot e_{\Small\begin{array}{c c c c c c c c}-0&1&1&0&0&0\\&&1\end{array}}+
(4\cdot x_{80}+x_{81})\cdot e_{\Small\begin{array}{c c c c c c c c}-0&0&1&1&0&0\\&&1\end{array}}\\
&+
x_{80}\cdot e_{\Small\begin{array}{c c c c c c c c}-0&1&1&1&0&0\\&&0\end{array}}+
x_{81}\cdot e_{\Small\begin{array}{c c c c c c c c}-0&0&1&1&1&0\\&&0\end{array}}+
x_{103}+x_{104}\cdot e_{\Small\begin{array}{c c c c c c c c}-1&2&2&1&0&0\\&&1\end{array}}+
x_{103}+x_{104}\cdot e_{\Small\begin{array}{c c c c c c c c}-1&1&2&1&1&0\\&&1\end{array}}\\
&+
x_{103}\cdot e_{\Small\begin{array}{c c c c c c c c}-0&1&2&2&1&0\\&&1\end{array}}+
x_{104}\cdot e_{\Small\begin{array}{c c c c c c c c}-0&1&2&1&1&1\\&&1\end{array}}+
4\cdot x_{117}\cdot e_{\Small\begin{array}{c c c c c c c c}-1&2&3&2&1&0\\&&2\end{array}}+
x_{117}\cdot e_{\Small\begin{array}{c c c c c c c c}-1&2&3&2&1&1\\&&1\end{array}}+
x_{124}\cdot e_{\Small\begin{array}{c c c c c c c c}-1&2&4&3&2&1\\&&2\end{array}},\end{align*}
with each $x_i\in k$.
The relation $[D,e\otimes x]=e\otimes 1$ puts many linear constraints on the $x_i$. Imposing them on $v$ gives
\begin{align*}v\,=\,\,&x_7\cdot e_{\Small\begin{array}{c c c c c c c c}1&0&0&0&0&0\\&&0\end{array}}+
x_7\cdot e_{\Small\begin{array}{c c c c c c c c}0&0&0&0&0&0\\&&1\end{array}}+
x_7\cdot e_{\Small\begin{array}{c c c c c c c c}0&1&0&0&0&0\\&&0\end{array}}+
x_7\cdot e_{\Small\begin{array}{c c c c c c c c}0&0&0&1&0&0\\&&0\end{array}}+
x_7\cdot e_{\Small\begin{array}{c c c c c c c c}0&0&0&0&1&0\\&&0\end{array}}+
x_7\cdot e_{\Small\begin{array}{c c c c c c c c}0&0&0&0&0&1\\&&0\end{array}}\\
&+
3\cdot x_{49}\cdot e_{\Small\begin{array}{c c c c c c c c}1&2&2&2&1&0\\&&1\end{array}}+
3\cdot x_{49}\cdot e_{\Small\begin{array}{c c c c c c c c}1&2&2&1&1&1\\&&1\end{array}}+
x_{49}\cdot e_{\Small\begin{array}{c c c c c c c c}1&1&2&2&1&1\\&&1\end{array}}+
x_{49}\cdot e_{\Small\begin{array}{c c c c c c c c}0&1&2&2&2&1\\&&1\end{array}}+
x_{63}\cdot e_{\Small\begin{array}{c c c c c c c c}2&3&4&3&2&1\\&&2\end{array}}\\&+
3\cdot e_{\Small\begin{array}{c c c c c c c c}-1&1&1&0&0&0\\&&0\end{array}}+
4\cdot e_{\Small\begin{array}{c c c c c c c c}-0&1&1&0&0&0\\&&1\end{array}}+
4\cdot e_{\Small\begin{array}{c c c c c c c c}-0&0&1&1&0&0\\&&1\end{array}}+
2\cdot e_{\Small\begin{array}{c c c c c c c c}-0&1&1&1&0&0\\&&0\end{array}}+
e_{\Small\begin{array}{c c c c c c c c}-0&0&1&1&1&0\\&&0\end{array}},\end{align*}
Note that $h\otimes x=[e\otimes x,f\otimes 1]$ is an element of ${\rm Soc}(\h')$ whose $[p]$th power in $\g$ is zero. In particular, we must have
$(\ad [v,f\otimes 1])^5(e\otimes 1)=0$. Imposing this condition on $v$
results in the equation $x_{63}=3\cdot x_7^5$. Since $[[[h\otimes x],e\otimes x],e\otimes x]=0$, we should insist similarly that $[[[v,f\otimes 1],v],v]=0$.
This yields $2\cdot x_{49}^2=0$ forcing $x_{49}=0$
and implying that \begin{align*}v\,=\,\,&x_7\cdot e_{\Small\begin{array}{c c c c c c c c}1&0&0&0&0&0\\&&0\end{array}}+
x_7\cdot e_{\Small\begin{array}{c c c c c c c c}0&0&0&0&0&0\\&&1\end{array}}+
x_7\cdot e_{\Small\begin{array}{c c c c c c c c}0&1&0&0&0&0\\&&0\end{array}}+
x_7\cdot e_{\Small\begin{array}{c c c c c c c c}0&0&0&1&0&0\\&&0\end{array}}+
x_7\cdot e_{\Small\begin{array}{c c c c c c c c}0&0&0&0&1&0\\&&0\end{array}}+
x_7\cdot e_{\Small\begin{array}{c c c c c c c c}0&0&0&0&0&1\\&&0\end{array}}\\
&+3\cdot x_7^5\cdot e_{\Small\begin{array}{c c c c c c c c}2&3&4&3&2&1\\&&2\end{array}}+3\cdot e_{\Small\begin{array}{c c c c c c c c}-1&1&1&0&0&0\\&&0\end{array}}+
4\cdot e_{\Small\begin{array}{c c c c c c c c}-0&1&1&0&0&0\\&&1\end{array}}+
4\cdot e_{\Small\begin{array}{c c c c c c c c}-0&0&1&1&0&0\\&&1\end{array}}+
2\cdot e_{\Small\begin{array}{c c c c c c c c}-0&1&1&1&0&0\\&&0\end{array}}+
e_{\Small\begin{array}{c c c c c c c c}-0&0&1&1&1&0\\&&0\end{array}}.\end{align*}

 Since the subspace $\Lie(U^+)\cap\Lie( C_{e\otimes 1}^+)$ is $1$-dimensional, $kD$ is the Lie algebra of
$C_{e\otimes 1}^+$. Set
$$v_0:=\,
\,3\cdot e_{\Small\begin{array}{c c c c c c c c}-1&1&1&0&0&0\\&&0\end{array}}+
4\cdot e_{\Small\begin{array}{c c c c c c c c}-0&1&1&0&0&0\\&&1\end{array}}+
4\cdot e_{\Small\begin{array}{c c c c c c c c}-0&0&1&1&0&0\\&&1\end{array}}+
2\cdot e_{\Small\begin{array}{c c c c c c c c}-0&1&1&1&0&0\\&&0\end{array}}+
e_{\Small\begin{array}{c c c c c c c c}-0&0&1&1&1&0\\&&0\end{array}}.$$
Since $[D,v_0]$ is a nonzero scalar multiple of $e\otimes 1$ and
$v=v_0+x_7(e\otimes 1)+x_7^5\cdot e_{\Small\begin{array}{c c c c c c c c}2&3&4&3&2&1\\&&2\end{array}}$, there exists $u\in C_{e\otimes 1}^+$ such that
$(\Ad\, u)(v)=v_0$. Since $C_{e\otimes 1}^+$ fixes both $D$ and $S\otimes 1$ we may assume without loss of generality that $v=v_0$.
On the other hand, it is straightforward to see that the Lie algebra $kD\oplus {\rm Soc}(\h')$ is generated by $D$, $e\otimes x$ and $S\otimes 1$. As these are uniquely determined up to conjugacy, so is
${\rm Soc}(\h')=\big[kD\oplus {\rm Soc}(\h'),kD\oplus {\rm Soc}(\h')\big]$.
This settles the present case.

(ii) Now suppose $D\in C_{e\otimes 1}$ is toral, acting as $(1+x)\del$ on $\OO(1;\underline{1})$. In view of Lemma~\ref{pindownd} we may assume that
\[D\,=\,4\cdot h_{\alpha_1}+
h_{\alpha_2}+
3\cdot h_{\alpha_3}+
2\cdot h_{\alpha_4}+
4\cdot h_{\alpha_5}+
h_{\alpha_6}+
3\cdot h_{\alpha_7}.\]
As before, we represent $e\otimes (1+x)$ by a general element $v$ of $\g_{e\otimes 1}(2)$. Imposing the condition $[D,e\otimes(1+x)]=e\otimes(1+x)$ on $v$ shows that $v$ equals \[(x_{30}+2\cdot x_{31})\cdot e_{\Small\begin{array}{c c c c c c c c}1&1&1&1&0&0\\&&1\end{array}}+
x_{31}\cdot e_{\Small\begin{array}{c c c c c c c c}1&1&1&1&1&0\\&&0\end{array}}+
(x_{30}+x_{31})\cdot e_{\Small\begin{array}{c c c c c c c c}0&1&1&1&1&0\\&&1\end{array}}+
x_{30}\cdot e_{\Small\begin{array}{c c c c c c c c}0&0&1&1&1&1\\&&1\end{array}}+
x_{31}\cdot e_{\Small\begin{array}{c c c c c c c c}0&1&1&1&1&1\\&&0\end{array}}+
x_{124}\cdot e_{\Small\begin{array}{c c c c c c c c}-1&2&4&3&2&1\\&&2\end{array}}.\]
for some $x_i\in k$. Again, we must have $[[[v,f\otimes 1],v],v]=0$ which yields a cubic equation, \begin{align*}x_{30}^3+4\cdot x_{30}^2\cdot x_{31}+2\cdot x_{30}\cdot x_{31}^2+2\cdot x_{31}^3&=0.
\end{align*}
In characteristic $5$ it conveniently factorises as $(x_{30}+3\cdot x_{31})^3=0$ giving $x_{30}=2\cdot x_{31}$. Now, recognising that $(\ad(h\otimes (1+x)))^4(e\otimes (1+x))=e\otimes 1$ and imposing this on $v$, we get a condition \[x_{31}^4\cdot x_{124}=2.\]
In particular, $x_{31}\ne 0$.
The maximal torus $T_0= \varpi_4^\vee(k^\times)$ of $C_{e\otimes 1}$ fixes
$D$ and we may
replace $v$ by $(\Ad\, t)(v)$ for any $t\in T_0$ without affecting the conditions imposed earlier. The explicit form of $v$ given above shows
that it can be replaced by an $(\Ad\,T_0)$-conjugate in such a way that $x_{31}=1$.
Then $x_{124}=2$ which implies that the $v$'s representing $e\otimes (1+x)$ form a single conjugacy class under the action of $\Ad\,T_0$.
At this point we can argue as in the previous case to conclude that ${\rm Soc}(\h')$ is unique up to conjugacy.

For the next two cases, let $p=7$. In this case $e\in\OO({\rm A}_2{{\rm A}_1}^3)$ and the following $\sl_2$-triple spans the subalgebra $S\otimes 1$ of ${\rm Soc}(\h')$:
\begin{align*}e\otimes 1\,&=\,e_{\Small\begin{array}{c c c c c c c c}1&0&0&0&0&0\\&&0\end{array}}+
e_{\Small\begin{array}{c c c c c c c c}0&0&0&0&0&0\\&&1\end{array}}+
e_{\Small\begin{array}{c c c c c c c c}0&1&0&0&0&0\\&&0\end{array}}+
e_{\Small\begin{array}{c c c c c c c c}0&0&0&1&0&0\\&&0\end{array}}+
e_{\Small\begin{array}{c c c c c c c c}0&0&0&0&0&1\\&&0\end{array}},\\
h\otimes 1\,&=\,2\cdot h_{\alpha_1}+
h_{\alpha_2}+
2\cdot h_{\alpha_3}+
h_{\alpha_5}+
h_{\alpha_7},\\
f\otimes 1\,&=\,2\cdot e_{\Small\begin{array}{c c c c c c c c}-1&0&0&0&0&0\\&&0\end{array}}+
e_{\Small\begin{array}{c c c c c c c c}-0&0&0&0&0&0\\&&1\end{array}}+
2\cdot e_{\Small\begin{array}{c c c c c c c c}-0&1&0&0&0&0\\&&0\end{array}}+
e_{\Small\begin{array}{c c c c c c c c}-0&0&0&1&0&0\\&&0\end{array}}+
e_{\Small\begin{array}{c c c c c c c c}-0&0&0&0&0&1\\&&0\end{array}}.
\end{align*}
This choice is compatible with \cite[p.~97]{LT11}.

(iii) Suppose $D^p=0$. Then we proceed as before. By Lemma~\ref{pindownd}, we may take
\[D\,=\,e_{\Small\begin{array}{c c c c c c c c}0&0&0&1&1&0\\&&0\end{array}}+
e_{\Small\begin{array}{c c c c c c c c}0&0&0&0&1&1\\&&0\end{array}}+
e_{\Small\begin{array}{c c c c c c c c}1&1&1&0&0&0\\&&0\end{array}}
- e_{\Small\begin{array}{c c c c c c c c}0&1&1&0&0&0\\&&1\end{array}}
-2\cdot e_{\Small\begin{array}{c c c c c c c c}0&0&1&1&0&0\\&&1\end{array}}+
e_{\Small\begin{array}{c c c c c c c c}0&1&1&1&0&0\\&&0\end{array}},\]
a regular nilpotent element of $\c_{e\otimes 1}$.
Now we let $v$ be a general element representing $e\otimes x\in {\rm Soc}(\h')$. As before, we have that $v\in \g_{e\otimes 1}(2)$. Since $[d,v]=e\otimes 1$ a GAP computation tells that
\begin{align*}v\,=\,\,x_7&\cdot e_{\Small\begin{array}{c c c c c c c c}1&0&0&0&0&0\\&&0\end{array}}+
x_7\cdot e_{\Small\begin{array}{c c c c c c c c}0&0&0&0&0&0\\&&1\end{array}}+
x_7\cdot e_{\Small\begin{array}{c c c c c c c c}0&1&0&0&0&0\\&&0\end{array}}+
x_7\cdot e_{\Small\begin{array}{c c c c c c c c}0&0&0&1&0&0\\&&0\end{array}}+
x_7\cdot e_{\Small\begin{array}{c c c c c c c c}0&0&0&0&0&1\\&&0\end{array}}\\&+
x_{37}\cdot e_{\Small\begin{array}{c c c c c c c c}1&1&1&1&1&0\\&&1\end{array}}+
x_{37}\cdot e_{\Small\begin{array}{c c c c c c c c}0&1&1&1&1&1\\&&1\end{array}}+
x_{37}\cdot e_{\Small\begin{array}{c c c c c c c c}1&2&2&1&0&0\\&&1\end{array}}+
4\cdot x_{56}\cdot e_{\Small\begin{array}{c c c c c c c c}1&2&2&2&2&1\\&&1\end{array}\
}+
x_{56}\cdot e_{\Small\begin{array}{c c c c c c c c}1&2&3&2&1&1\\&&2\end{array}}\\&+
x_{63}\cdot e_{\Small\begin{array}{c c c c c c c c}2&3&4&3&2&1\\&&2\end{array}}+
e_{\Small\begin{array}{c c c c c c c c}-0&0&0&0&1&0\\&&0\end{array}}+
2\cdot e_{\Small\begin{array}{c c c c c c c c}-0&0&1&0&0&0\\&&1\end{array}}+
6\cdot e_{\Small\begin{array}{c c c c c c c c}-0&1&1&0&0&0\\&&0\end{array}}+
e_{\Small\begin{array}{c c c c c c c c}-0&0&1&1&0&0\\&&0\end{array}}\end{align*} for some $x_{7}, x_{37}, x_{56}, x_{63}\in k$.
Now we proceed as in case~(i). As $h\otimes x=[e\otimes x,f\otimes 1]$ and the latter element has zero $[p]$th power in $\g$ we must insist that $(\ad\,[v,f\otimes 1])^7(e\otimes 1)=0$. This leads to the condition $x_{63}=3\cdot x_7^7$. Furthermore, $[[h\otimes x,e\otimes x],e\otimes x]=0$ leads to the condition $3\cdot x_{37}^2=0$. As a result, $x_{37}=0$ and
\begin{align*}v\,=\,\,&x_7\cdot e_{\Small\begin{array}{c c c c c c c c}1&0&0&0&0&0\\&&0\end{array}}+
x_7\cdot e_{\Small\begin{array}{c c c c c c c c}0&0&0&0&0&0\\&&1\end{array}}+
x_7\cdot e_{\Small\begin{array}{c c c c c c c c}0&1&0&0&0&0\\&&0\end{array}}+
x_7\cdot e_{\Small\begin{array}{c c c c c c c c}0&0&0&1&0&0\\&&0\end{array}}+
x_7\cdot e_{\Small\begin{array}{c c c c c c c c}0&0&0&0&0&1\\&&0\end{array}}+
4\cdot x_{56}\cdot e_{\Small\begin{array}{c c c c c c c c}1&2&2&2&2&1\\&&1\end{array}\
}\\&+
x_{56}\cdot e_{\Small\begin{array}{c c c c c c c c}1&2&3&2&1&1\\&&2\end{array}}+
3\cdot x_7^7\cdot e_{\Small\begin{array}{c c c c c c c c}2&3&4&3&2&1\\&&2\end{array}}+
e_{\Small\begin{array}{c c c c c c c c}-0&0&0&0&1&0\\&&0\end{array}}+
2\cdot e_{\Small\begin{array}{c c c c c c c c}-0&0&1&0&0&0\\&&1\end{array}}+
6\cdot e_{\Small\begin{array}{c c c c c c c c}-0&1&1&0&0&0\\&&0\end{array}}+
e_{\Small\begin{array}{c c c c c c c c}-0&0&1&1&0&0\\&&0\end{array}}.\end{align*}
We denote the set of all such $v$'s by $\mathcal{V}$ and
put
$$v_0\,:=\,e_{\Small\begin{array}{c c c c c c c c}-0&0&0&0&1&0\\&&0\end{array}}+
2\cdot e_{\Small\begin{array}{c c c c c c c c}-0&0&1&0&0&0\\&&1\end{array}}+
6\cdot e_{\Small\begin{array}{c c c c c c c c}-0&1&1&0&0&0\\&&0\end{array}}+
e_{\Small\begin{array}{c c c c c c c c}-0&0&1&1&0&0\\&&0\end{array}}$$
Note that $v_0\in\mathcal{V}$ and $[D,v_0]=e\otimes 1$.

Recall the $1$-dimensional torus $T_1$ of $C_{e\otimes 1}$ defined in \S~\ref{3.6}. Since $D^p=0$, it follows from
\cite{McN04} that there exists a one-parameter unipotent subgroup
$U_1=\{x(t)\,|\,\,t\in k\}$ of $C_{e\otimes 1}$ such that $\Lie(U_1)=kD$ and $T_1\subset N_G(U_1)$.
By the general theory of algebraic groups, $$\big(\Ad\,x(t)\big)(v_0)\,=\,\textstyle{\sum}_{i\ge 0}\,t^iX^{(i)}(v_0)$$ for some endomorphisms $X^{(i)}$ of $\g$ independent of $t$ (these endomorphisms may be different from those in Proposition~\ref{expo}). Each endomorphism $X^{(i)}$ has weight $2i$ with respect to the action of $T_1$ on $\End(\g)$ and there exists a nonzero scalar $r\in k^\times $ such that
$X^{(i)}=\frac{r^i}{i!}(\ad D)^i$ for $1\le i\le p-1$. Since the set $\mathcal{V}$ is $(\Ad\,U_1)$-stable, looking at the $T_1$ weights of the root vectors involved one observes that the orbit $(\Ad\ U_1)\, v_0\subset \mathcal{V}$ contains a vector of the form
$$v_\lambda\,=\,v_0+\lambda\Big(4\cdot e_{\Small\begin{array}{c c c c c c c c}1&2&2&2&2&1\\&&1\end{array}}
+ e_{\Small\begin{array}{c c c c c c c c}1&2&3&2&1&1\\&&2\end{array}}\Big)$$
for some $\lambda\in k$.
In order to simplify $v_\lambda$ further we are going to employ the
group of type ${\rm A}_1$ generated by the
unipotent root subgroups $U_{\pm\tilde{\beta}}$ of $C_{e\otimes 1}$; see \S~\ref{3.6} for detail. We call it  $\mathcal{S}_{\tilde{\beta}}$
and let $X_{\tilde{\beta}}$ denote a spanning vector of $\Lie(U_{\tilde{\beta}})$.
Since $\tilde{\beta}=3\beta_1+2\beta_2$ is the highest root of $\Phi(C_{e\otimes 1}, T_0)$ with respect to the basis of simple roots $\{\beta_1,\beta_2\}$ and $T_0$ is generated by
$\varpi_i^\vee(k^\times)$ with $i=4,6$, it is immediate from \cite[p.~97]{LT11} that $\mathcal{S}_{\tilde{\beta}}$ contains $\varpi_6^\vee(k^\times)$ as a maximal torus acting on the line $k X_{\tilde{\beta}}$ with weight $2$. Since all weights of $\g$ with respect to
$\varpi_6^\vee(k^\times)$ lie in the set
$\{\pm 2,\pm 1, 0\}$, representation theory of $\SL(2)$ yields that the
$\mathcal{S}_{\tilde{\beta}}$-module $\g$ is completely reducible.
This implies that if $u=u_0+u_{-1}\in\g$ is the sum of (nonzero) weight vectors $u_0$ and $u_{-1}$ for $\varpi_6^\vee(k^\times)$ corresponding to weights $0$ and $-1$, respectively, then
$[X_{\tilde{\beta}}, u]\ne 0$.

Since the group $U_{\tilde{\beta}}$ fixes $D$ and $S\otimes 1$ point-wise, it acts on $\mathcal{V}$.
Applying the preceding remark with $u=v_0$ it is straightforward to see that each $v_\lambda$ lies in the orbit $(\Ad\,U_{\tilde{\beta}})\,v_0$.
This determines $v$ uniquely up to conjugacy under the adjoint action of centraliser of $D$ in $C_{e\otimes 1}$. So we may repeat the argument used at the end of part~(i) and move to the last case.

(iv) Suppose $p=7$ and $D^p=D$. Then $S\otimes 1$ is as in part~(iii) and in view of Lemma~\ref{pindownd} we may assume that
$$D\,=\,5\cdot h_{\alpha_1}+
4\cdot h_{\alpha_2}+
3\cdot h_{\alpha_3}+
h_{\alpha_4}+
6\cdot h_{\alpha_6}+
3\cdot h_{\alpha_7}.$$ As in part~(ii),
we represent $e\otimes (1+x)$ by a general element $v$ of $\g_{e\otimes 1}(2)$. Imposing the condition $[D,e\otimes(1+x)]=e\otimes(1+x)$ on $v$ gives
\[v\,=\,\,x_{55}\cdot e_{\Small\begin{array}{c c c c c c c c}1&2&2&2&2&1\\&&1\end{array}}+
x_{56}\cdot e_{\Small\begin{array}{c c c c c c c c}1&2&3&2&1&1\\&&2\end{array}}+
(x_{103}+x_{104})\cdot e_{\Small\begin{array}{c c c c c c c c}-1&1&2&1&1&0\\&&1\end{array}}+
x_{103}\cdot e_{\Small\begin{array}{c c c c c c c c}-0&1&2&2&1&0\\&&1\end{array}}+
x_{104}\cdot e_{\Small\begin{array}{c c c c c c c c}-0&1&2&1&1&1\\&&1\end{array}}.\]
As $[[e\otimes (1+x),f\otimes 1],e\otimes (1+x)]$ commutes with $e\otimes (1+x)$ and $(\ad (h\otimes (1+x)))^6(e\otimes (1+x))=e\otimes 1$, we must have $[[[v,f\otimes 1],v,v]=0$ and $(\ad\,[v,f\otimes 1])^6(v)=e\otimes 1$. Imposing these conditions on $v$ leads to two algebraic equations
\begin{align}x_{56}\cdot x_{103}^2+4\cdot x_{56}\cdot x_{103}\cdot x_{104}+4\cdot x_{56}\cdot x_{104}^2&=0\label{A}\\
x_{55}\cdot x_{56}^2\cdot x_{103}^4+5\cdot x_{55}\cdot x_{56}^2\cdot x_{103}^2\cdot x_{104}^2+x_{55}\cdot x_{56}^2\cdot x_{104}^4&=1.\label{B}
\end{align}
Since $x_{56}\neq 0$ by (\ref{B}), we may divide (\ref{A}) through by $x_{56}$. This yields
$$0\,=\,x_{103}^2+4x_{103}\cdot x_{104}+4x_{104}^2\,=\,(x_{103}+2x_{104})^2,$$ so that $x_{103}=-2\cdot x_{103}$. Then $(\ref{B})$ can be rewritten as
$$1\,=\,x_{55}\cdot x_{56}^2\cdot \big(16\cdot x_{104}^4+20\cdot x_{104}+x_{104}^4\big)\,=\,2\cdot x_{55}\cdot x_{56}^2\cdot x_{104}^4.$$
As a result, both $x_{56}$ and $x_{104}$ are nonzero.
Since the maximal torus $T_0$ of $C_{e\otimes 1}$ generated by $\varpi_i^\vee(k^\times)$ with $i\in\{4,6\}$ fixes $D$ and $S\otimes 1$ point-wise, we may replace $v$ by any element of the form $(\Ad\,t)(v)$ with $t\in T_0$.
In doing so we may arrange for both $x_{56}$ and $x_{104}$ to be equal to $1$. Then $x_{55}=4$ implying that $v$ is uniquely determined up to conjugacy under the action of centraliser of $D$ in $C_{e\otimes 1}$. Then so is $v-(e\otimes 1)$ and we can argue as before to finish the proof.

\subsection{The normaliser of ${\rm Soc}(\h)$ in $G$}\label{3.9} Let $\widetilde{\h}=\n_\g({\rm Soc}(\h))$.
In this subsection we are going to prove the remaining statements of
Theorem~\ref{thm:esdps} except for the maximality of $\widetilde{\h}$. We may assume that $G={\rm Aut}(\g)^\circ$ is an adjoint group of type ${\rm E}_7$.
Let $\h$ be an esdp of $\g$, and put $N:=N_G({\rm Soc}(\h))$.
By \S~\ref{3.7}, we know that ${\rm Soc}(\h)=S\otimes \OO(1;\underline{1})$ is unique
up to $(\Ad\,G)$-conjugacy.  Therefore, we may take for ${\rm Soc}(\h)$ the subalgebra of $\g$ described in \S~\ref{3.6}.
Put $\widetilde{\h}:=\n_\g({\rm Soc}(\h))$.

Recall that the role of $e\otimes x^{p-1}$ is played by a multiple of the highest root vector $e_{\tilde{\alpha}}
\in\g_{e\otimes1}(2)$ whilst $\del\in\mathcal{D}$ is represented by a regular nilpotent element of $\c_e$ contained in $\Lie(C_{e\otimes 1}\cap U^-)$;
we call it $F$.
Since $\h$ is a restricted subalgebra of $\g$, the elements $y_s:=(\ad F)^s(e_{\tilde{\alpha}})\in\g_{e\otimes 1}(2)$ with $0\le s\le p-2$ which represent nonzero multiples of $e\otimes x^{p-s-1}$ lie in
$\N_p(\g)$. By Proposition~\ref{expo}, there exist one-parameter unipotent subgroups $\mathcal{Y}_s=\big\{y_s(t)\,|\,\,t\in k\big\}$ of $G_{e\otimes 1}$ such that $y_s\in \Lie(\mathcal{Y}_s)$ and
$$\big(\Ad\,y_s(t)\big)(v)\,\equiv\,\,\sum_{i=0}^{p-1}\,\frac{1}{i!}(\ad y_s)^i(v)\mod \bigoplus_{j\ge i+2p} \g(\tau,j)\qquad \big(\forall v\in \g(\tau,i)\big).$$ Since $\g(\tau, j)=0$ for $j\ge 2p-2$ by \cite[pp.~97, 104]{LT11}, the groups $\Ad\, \mathcal{Y}_s$ fix  $A:=e\otimes \OO(1;\underline{1})$  point-wise and send $f\otimes 1$ into the space $(f\otimes 1)+[A,f\otimes1]+[A,[A,f\otimes1]]$. (Recall that $(\ad A)^3 (f\otimes 1)=0$.) Since $e\otimes\OO(1;\underline{1})$ and $f\otimes 1$ generate the Lie algebra
$S\otimes \OO(1;\underline{1})$ we thus deduce that $\mathcal{Y}_s\subset N^\circ$ for all $0\le s\le p-2$.

Since $e\otimes 1$ is a regular nilpotent element in a standard Levi subalgebra $\l=\Lie(L)$ of $\g$ of type ${\rm A}_3{\rm A}_2{\rm A}_1$ or ${\rm A}_2{{\rm A}_1}^3$, using \cite[pp.~97, 104]{LT11} it is easy to observe that there is a regular subgroup $\mathcal{S}$ of type ${\rm A}_1$ in $L$ which commutes with $C_{e\otimes 1}$ and has the property that
$\Lie(\mathcal{S})=S\otimes 1$. The subgroup $\mathcal{S}$ contains $\tau(k^\times)$ as a maximal torus and $e_{\tilde{\alpha}}$ is a highest weight vector of weight $2$ for $\mathcal{S}$. Hence the $(\Ad\,\mathcal{S})$-submodule of $\g$ generated by $e_{\tilde{\alpha}}$ is spanned by $(\ad(f\otimes 1))^i(e_{\tilde{\alpha}})$ with $i\in\{0,1,2\}$.
As $\mathcal{S}$ fixes $F\in\Lie(C_{e\otimes 1})$, it follow that
$(\Ad\,\mathcal{S})(A)\subset {\rm Soc}(\h)$. Since the Lie algebra ${\rm Soc}(\h)$ is generated by $A$ and $\Lie(\mathcal{S})=S\otimes 1$, we now deduce that $\mathcal{S}\subset N^\circ$.

It is immediate from the above remarks that ${\rm Soc}(\h)\subseteq\Lie(N)$.
If $p=5$ then the Borel subgroup $B_{e\otimes 1}^+$ also normalises ${\rm Soc}(\h)$. If $p=7$ the subalgebra $\mathfrak{w}_{(0)}$ defined in \S~\ref{3.6} lies in $\n_\g({\rm Soc}(\h))$ and is acted upon by the $1$-dimensional torus $T_1\subset C_{e\otimes 1}$. It is easy to  check that $T_1\subset N^\circ$. We claim that there exists a connected unipotent subgroup $W_{(1)}\subset C_{e\otimes 1}\cap N^\circ$ such that $\Lie(W_{(1)})=\mathfrak{w}_{(1)}$.
Indeed, it follows from \cite{Sei} or \cite{McN04} that there exists
a subgroup $\mathcal{S}_{\rm reg}$ of type ${\rm A}_1$ in $C_{e\otimes 1}$ such that $T_1\subset \mathcal{S}_{\rm reg}$ and
$F_1+F_2\in \Lie(\mathcal{S}_{\rm reg})$
(the regular nilpotent element $F_1+F_2\in\c_{e\otimes 1}$ was defined in \S~\ref{3.6}).
Since $T_1$ acts on the maximal unipotent subgroup
$\mathcal{S}_{\rm reg}\cap U^+$ of $\mathcal{S}_{\rm reg}$ with weight $2$
and $(\ad (F_1+F_2))^2$ maps $\Lie(\mathcal{S}_{\rm reg}\cap U^+)$ onto
$k(F_1+F_2)$, it is straightforward to see that
$\Lie(\mathcal{S}_{\rm reg})\subset \mathfrak{w}$.
We  take for $W_{(1)}$ the connected unipotent group generated by $\mathcal{S}_{\rm reg}\cap U^+$ and
the unipotent root subgroups $C_{e\otimes 1,\gamma}$ of $C_{e\otimes 1}$ corresponding to the roots $\gamma$ of height $\ge 2$ with respect to the basis of simple roots $\{\beta_1,\beta_2\}$. Since in characteristic $7$ the groups $\mathcal{S}_{\rm reg}\cap U^+$ and
$C_{e\otimes 1,\gamma}$ with ${\rm ht}(\gamma)\ge 2$ normalise
$\mathfrak{w}=\Lie(\mathcal{S}_{\rm reg})\oplus
\textstyle{\sum}_{{\rm ht}(\gamma)\ge\, 2}\,  \Lie(C_{e\otimes 1,\gamma})$
and fix $e_{\tilde{\alpha}}$, we get $W_{(1)}\subset N^\circ$ proving the claim. The above discussion shows that $W_{(0)}:=T_1\cdot W_{(1)}$ is a $6$-dimensional connected solvable subgroup of $N\cap C_{e\otimes 1}$
normalising $\mathfrak{w}\cong W(1;\underline{1})$. Moreover, it not hard to check that $W_{(0)}$ acts on $\mathfrak{w}$ faithfully.
Since it  follows from \cite{Jac}
that ${\rm Aut}(W(1;\underline{1}))\cong {\rm Aut}(\OO(1;\underline{1}))$
is a $6$-dimensional connected algebraic group, it must be that
$W_{(0)}\cong {\rm Aut}(W(1;\underline{1}))$ as algebraic $k$-groups.

We claim that $\widetilde{\h}$ acts faithfully on ${\rm Soc}(\h)$. Indeed,
let $c\in \c_\g({\rm Soc}(\h))$. Then $c\in \c_{e\otimes 1}$ commutes with $A$ and normalises the Lie algebra $\c_{e\otimes 1}\cap \widetilde{\h}$. If $p=5$ the letter coincides with $\c_{e\otimes 1}$ which acts on $A$ faithfully; see \S~\ref{3.6}. So $c=0$ in this case. If $p=7$ then our discussion in \ref{3.6} shows that the $\c_{e\otimes 1}$-module generated by $e_{\tilde{\alpha}}$ contains $A$ properly.
This means that
$\c_{e\otimes 1}\cap \widetilde{\h}$ is a proper Lie subalgebra of $\c_{e\otimes 1}$ containing $\mathfrak{w}$. Since $\mathfrak{w}$ is maximal in $\c_{e\otimes 1}$ by \cite[Theorem~1.1(iv)]{HSMax}, we now obtain that $\c_{e\otimes 1}\cap \widetilde{\h}=\mathfrak{w}$ and $c\in \mathfrak{w}$.
Since the simple Lie $\mathfrak{w}$ must act faithfully on $A$, we see that $c=0$ in all cases. The claim follows.

Next we show that $N$ acts faithfully on ${\rm Soc}(\h)$. If
$\sigma\in C_G({\rm Soc}(\h))$ that $\sigma\in N\cap C_{e\otimes 1}$ fixes $e_{\tilde{\alpha}}$ which implies that $\sigma\in B_{e\otimes 1}^+$. If $p=5$ then \cite[p.~105]{LT11} implies that both $\g_{e\otimes 1}$ and $\g_{f\otimes 1}$ are spanned by $T_0$-weight vectors of even weights.
Since this holds in any good characteristic (including characteristic zero)
it follows that all weights of the $T_0$-module $\g$ are even.
As a consequence,
 $C_{e\otimes 1}\cong \,
 {\rm PGL}(2,k)$.
Since $\sigma$ fixes $[x,e]\in A$ for any $x\in \Lie(C_{e\otimes 1}\cap U^-)$, this entails that $\sigma=1$ in the present case.

If $p=7$ then $\sigma\in N\cap C_{e\otimes 1}$ fixes $A$ point-wise and
acts as an automorphism on the Lie algebra
$\c_{e\otimes 1}\cap \widetilde{\h}\cong W(1;\underline{1})$.
Since ${\rm Aut}(\c_{e\otimes 1}\cap \widetilde{\h})\cong W_{(0)}$ by our earlier remarks, there exists $w\in W_{(0)}$ such that
$w\sigma$ acts on $\c_{e\otimes 1}\cap \widetilde{\h}$ trivially.
If $z$ is the semisimple part of $w\sigma$ in $C_{e\otimes 1}$ then  $\c_{e\otimes 1}\cap \widetilde{\h}$ is contained in the regular subalgebra
$(\c_{e\otimes 1})^z$ of $\c_{e\otimes 1}$.  Since $\c_{e\otimes 1}\cap \widetilde{\h}$ is maximal in $\c_{e\otimes 1}$
and no regular subalgebra of $\c_{e\otimes 1}$ is isomorphic to $W(1;\underline{1})$,
we obtain $z=1$. But then $w\sigma$ is unipotent and
$\dim\, (\c_{e\otimes 1})^{w\sigma}$ coincides with the number of Jordan blocks of $w\sigma$ on $\c_{e\otimes 1}$. Since $\c_{e\otimes 1}\cap \widetilde{\h}\subseteq (\c_{e\otimes 1})^{w\sigma}$, this number is at least
$7=\frac{1}{2}\dim\,\c_{e\otimes 1}$. This implies that
$(w\sigma-{\rm Id})^2$ annihilates $\c_{e\otimes 1}$. Since $C_{e\otimes 1}$ is a connected simple algebraic group of type ${\rm G}_2$ by \cite[p.~97]{LT11},  applying \cite[Theorem~1]{PSup} yields $\sigma=w^{-1}\in W_{(0)}$. Since
$W_{(0)}\cong {\rm Aut}(\OO(1;\underline{1}))$ acts faithfully on
$A=e\otimes\OO(1;\underline{1})\cong \OO(1;\underline{1})$ this gives $\sigma=1$.

As a result of the above deliberations, we obtain a natural injective homomorphism of algebraic $k$-groups $\psi\colon\, N\to {\rm Aut}(S\otimes\OO(1;\underline{1}))$.
Since $\widetilde{\h}$ acts faithfully ${\rm Soc}(\h)=S\otimes\OO(1;\underline{1})$,
the differential ${\rm d}_e\psi\colon\,\Lie(N)\to \Der(S\otimes \OO(1;\underline{1}))$ is injective as well.
As before, we identify the centreless Lie algebra $S\otimes \OO(1;\underline{1})$ with $\ad(S\otimes \OO(1;\underline{1}))$. It follows from Block's theorem that $$\Der(S\otimes \OO(1;\underline{1}))\,\cong\,(S\otimes\OO(1;\underline{1}))\rtimes
({\rm Id}\otimes W(1;\underline{1}));$$ see \cite[Corollary~3.3.5]{Str04}. Since the Lie algebra of the algebraic group
${\rm Aut}(S\otimes\OO(1;\underline{1}))$ is a subalgebra of
$\Der(S\otimes \OO(1;\underline{1}))$ preserving the nilradical $S\otimes \OO(1;\underline{1})_{(1)}$ of
$S\otimes \OO(1;\underline{1})$, it is contained in $(S\otimes\OO(1;\underline{1}))\rtimes
({\rm Id}\otimes W(1;\underline{1})_{(0)})$. On the other hand, the group
${\rm PGL_2}\big(\OO(1;\underline{1})\big)\rtimes{\rm Aut}(\OO(1;\underline{1}))$ embeds in a natural way into the automorphism group of the Lie algebra $S\otimes \OO(1;\underline{1})\cong \sl_2\big(\OO(1;\underline{1})\big)$. Moreover, since $p>2$ it is straightforward to see that
$$\Lie\big({\rm PGL_2}\big(\OO(1;\underline{1})\big)\rtimes{\rm Aut}(\OO(1;\underline{1}))\big)\,\cong\, (S\otimes\OO(1;\underline{1}))\rtimes
({\rm Id}\otimes W(1;\underline{1})_{(0)}).$$ Since ${\rm Aut}(\OO(1;\underline{1}))$ is a connected group it follows that
\begin{equation}\label{PGL}
{\rm Aut}(S\otimes\OO(1;\underline{1}))^\circ\,\cong\,\,{\rm PGL_2}\big(\OO(1;\underline{1})\big)\rtimes{\rm Aut}(\OO(1;\underline{1}))
\end{equation}
as algebraic $k$-groups. The $k$-algebra $\OO(1;\underline{1})$ acts freely on the Lie algebra $S\otimes\OO(1;\underline{1})$ and identifies with the centroid
$\mathcal{C}:=\,\End_{\,S\otimes\OO(1;\underline{1})}\,\big(S\otimes\OO(1;
\underline{1})\big)$ of the latter. The group ${\rm Aut}(S\otimes \OO(1;\underline{1}))$ acts on $\mathcal{C}$ and we let ${\rm Aut}_{\mathcal{C}}(S\otimes \OO(1;\underline{1}))$ denote the kernel of this action.
As $(\OO(1;\underline{1})_{(1)})^p=0$, it is easy to check that any $x\in S\otimes \OO(1;\underline{1})_{(1)}$
has the property that $(\ad\,x)^p=0$ and
$[(\ad\,x)^{i}(a),(\ad\,x)^{j}(b)]=0$ for all $a,b\in S\otimes\OO(1;\underline{1})$ whenever $i+j\ge p$. From this it is immediate
that $\exp(\ad\,x)\in {\rm Aut}_{\,\mathcal{C}}(S\otimes \OO(1;\underline{1}))$
for all $x\in S\otimes \OO(1;\underline{1})_{(1)}$. Let $\mathcal{R}$ denote the subgroup of
 ${\rm Aut}_{\mathcal{C}}(S\otimes \OO(1;\underline{1}))$
 generated by all $\exp(\ad\,x)$ with  $x\in S\otimes \OO(1;\underline{1})_{(1)}$. Clearly, $\mathcal{R}$ is a connected unipotent group and $\Lie(\mathcal{R})=S\otimes \OO(1;\underline{1})_{(1)}$.
 In view of (\ref{PGL}) the connected subgroup $\mathcal{H}$ of ${\rm Aut}_{\,\mathcal{C}}(S\otimes \OO(1;\underline{1}))$ generated by $\mathcal{R}$ and the simple algebraic subgroup
 ${\rm Aut}(S\otimes 1)$ of $ {\rm Aut}_{\mathcal{C}}(S\otimes \OO(1;\underline{1}))$ identifies with ${\rm PGL_2}\big(\OO(1;\underline{1})\big)$ in such a way that
 $\mathcal{R}=R_u(\mathcal{H})$ identifies with the unipotent radical of ${\rm PGL_2}\big(\OO(1;\underline{1})\big)$.

Let $\{u,v,w\}$ be a nonzero $\sl_2$-triple of $S\otimes \OO(1;\underline{1})$. Replacing $\{u,v,w\}$ with $\{s(u),s(v),s(w)\}$ for some $s\in {\rm Aut}(S\otimes 1)$ we may assume that
$$(u,v,w)\equiv (e\otimes 1,h\otimes 1,f\otimes 1)\mod \big(S\otimes \OO(1;\underline{1})_{(1)}\big)^3.$$
If $v=h\otimes a+e\otimes b+f\otimes c$ and $d\in\Z_{>0}$ are such that  $a\in 1+\OO(1;\underline{1})_{(1)}$, $b,c\in
\OO(1;\underline{1})_{(d)}$  and $\{b,c\}\not\subset \OO(1;\underline{1})_{(d+1)}$, then we can find an element $x\in (ke+kf)\otimes \OO(1;\underline{1})_{(d)}$ such that $\exp(\ad\,x)(v)=h\otimes a'+e\otimes b'+f\otimes c'$ for some
 $a\in 1+\OO(1;\underline{1})_{(1)}$ and $b,c\in
\OO(1;\underline{1})_{(d+1)}$. Therefore, replacing our current $\sl_2$-triple $\{u,v,w\}$ with $\{r(u),r(v), r(w)\}$ for a suitable $r\in\mathcal{R}$ we may assume without loss that $v=h\otimes a_1$ for some $a_1\in 1+\OO(1;\underline{1})_{(1)}$. If $u=e\otimes b_1+h\otimes b_2+
f\otimes b_3$, where $b_i\in \OO(1;\underline{1})$, then the condition
$[v,u]=2u$ yields $2a_1b_1=2b_1$, $2b_2=0$, and $2ab_3=-2b_3$. Since  $p>2$
and $a+1\in\OO(1;\underline{1})^\times$, this entails that $b_2=b_3=0$. So $u=e\otimes b_1$ for some $b_1\in \OO(1;\underline{1})$. Since $[v,w]=-2w$, one can argue similarly to deduce that $w=f\otimes c_1$ for some $c_1\in  \OO(1;\underline{1})$. Since $[u,w]=v$ we must have $b_1c_1=a_1$. As a consequence,
$b_1,c_1\in \OO(1;\underline{1})^\times$. But then the relation
$a_1b_1=b_1$ gives $a_1=1$ forcing $v=h\otimes 1$ and $c_1=b_1^{-1}$. Finally, since $b_1$ is invertible there is $y\in h\otimes \OO(1;\underline{1})_{(1)}$ such that $\exp(\ad\,y)(e\otimes b_1)=e\otimes 1$.
Since $\exp(\ad\,y)$ fixes $v=h\otimes 1$, we conclude that $\exp(\ad\,y)(w)=f\otimes 1$. This shows that all nonzero $\sl_2$-triples of
$S\otimes \OO(1;\underline{1})$ are conjugate under the action of $\mathcal{H}$.

We now claim that the group ${\rm Aut}(S\otimes \OO(1;\underline{1}))$ is connected. Indeed, let $g$ be an arbitrary element of ${\rm Aut}(S\otimes \OO(1;\underline{1}))$. It follows from (\ref{PGL}) that there is a
$g'\in {\rm Aut}(S\otimes \OO(1;\underline{1}))^\circ$ such that $gg'\in
{\rm Aut}_{\mathcal{C}}(S\otimes \OO(1;\underline{1}))$. Hence we may assume that $g$  fixes $\mathcal{C}$ point-wise. Then the action of $g$ on $S\otimes\OO(1;\underline{1})$ is uniquely  determined by its effect on
$S\otimes 1$. Let $u=g(e\otimes 1)$, $v=g(h\otimes 1)$ and $w=g(f\otimes 1)$.
Clearly, $\{u,v,w\}$ is a nonzero $\sl_2$-triple in $S\otimes \OO(1;\underline{1})$. By the previous paragraph, there is a $g''\in \mathcal{H}\subseteq {\rm Aut}_{\mathcal{C}}(S\otimes \OO(1;\underline{1}))^\circ$ which has the same effect on $S\otimes 1$ as $g$.
Therefore, $g=g''$ and the claim follows. As a byproduct we obtain that
$\mathcal{H}\cong {\rm PGL_2}\big(\OO(1;\underline{1})\big)$ coincides with ${\rm Aut}_{\mathcal{C}}(S\otimes \OO(1;\underline{1})).$

If $p=7$ then the above discussion shows that the connected group $N$ has the same dimension as the connected group
${\rm Aut}(S\otimes \OO(1;\underline{1}))$. Since ${\rm d}_e\psi$ is injective
the map $\psi\colon\,N\to
{\rm Aut}(S\otimes \OO(1;\underline{1})$ is an isomorphism of algebraic $k$-groups. Therefore, $$N\,\cong\, {\rm PGL_2}\big(\OO(1;\underline{1})\big)\rtimes {\rm Aut}(\OO(1;\underline{1}))\, \ \mbox{ when }\ p=7.$$ It follows that $\Lie(N)$ has codimension $1$ in $\widetilde{\h}
\cong (S\otimes \OO(1;\underline{1}))\rtimes ({\rm Id}\otimes W(1;\underline{1}))$. As $\mathcal{D}$ is transitive for any esdp $\h$, we obtain that $\widetilde{\h}=\h+\Lie(N)$.

If $p=5$ then $\c_{e\otimes 1}$ identifies with a transitive subalgebra of $W(1;\underline{1})$ isomorphic to $\sl_2(k)$. Then $\widetilde{\h}\cong (S\otimes\OO(1;\underline{1}))\rtimes
{\rm Id}\otimes \big(k\partial\oplus k(x\partial)\oplus(x^2\partial)\big)$
by Lemma~\ref{tran}.
Also, $\psi(N)$ is a closed subgroup of ${\rm Aut}(S\otimes \OO(1;\underline{1}))$ containing $\mathcal{H}$.
More precisely, $\psi(N) = \mathcal{H}\cdot \mathcal{A}$ where $\mathcal{A}=\psi(B_{e\otimes 1}^+)$, a $2$-dimensional connected solvable subgroup of ${\rm Aut}(S\otimes \OO(1;\underline{1}))$ fixing $S\otimes 1$ point-wise. Therefore, $\mathcal{A}$ acts by automorphisms on the associative algebra $\mathcal{C}\cong \OO(1;\underline{1})$. Since $\mathcal{A}$
preserves the maximal ideal of $\mathcal{C}$ and
$\Lie(B_{e\otimes 1}^+)\subset \c_{e\otimes 1}$, our identification of
$\widetilde{\h}$ implies that $\Lie(\mathcal{A})= k(x\partial)\oplus(x^2\partial)$.

This shows that there exists a connected solvable subgroup
${\rm Aut}_{\le 1}(\OO(1;\underline{1}))\cong  B_{e\otimes 1}^+$ of ${\rm Aut}(\OO(1;\underline{1}))$
with Lie algebra $k(x\partial)\oplus k(x^2\partial)\cong \Lie(B_{e\otimes 1}^+)$ such that
$$N\,\cong\, {\rm PGL_2}\big(\OO(1;\underline{1})\big)\rtimes {\rm Aut}_{\le 1}(\OO(1;\underline{1}))\ \,\mbox{ when }\ p=5.$$
Since $\c_{e\otimes 1}$ identifies with $k\partial\oplus k(x\partial)\oplus(x^2\partial)$  we see that in the present case $\Lie(N)$ has codimension $1$ in $\widetilde{\h}$.
Since the component $\mathcal{D}$ of any esdp $\h$ is transitive, we again obtain that $\widetilde{\h}=\h+\Lie(N)$.

As $\mathcal{H}\subset \psi(N)$ acts transitively on the set of all nonzero $\sl_2$-triples of
$S\otimes \OO(1;\underline{1})$ we have proved all statements of Theorem~\ref{thm:esdps} except for the maximality of $\widetilde{\h}$.
This issue will be addressed after we classify all maximal subalgebras of $\g$ with semisimple socles.

\section{The maximal Lie subalgebras with semisimple socles}\label{socle-ss}
In this section we assume that $\m$ is a maximal Lie subalgebra of $\g$ whose socle ${\rm Soc}(\m)$ is semisimple. This assumption implies that
there exist simple Lie subalgebras $S_1,\ldots, S_r$ of $\g$ such that
$[S_i,S_j]=0$ for $i\ne j$ and ${\rm Soc}(\m)=S_1\oplus\cdots \oplus S_r$.
In the next subsection we deal with the case where $r\ge 2$. Using a theorem of Bate--Martin--R\"ohrle--Tange \cite{BMRT} we are going to show that
in this situation there exists a maximal connected subgroup $M$ of $G$ such that $\m=\Lie(M)$ (see also \cite{H}).

It $\m$ is regular, that is contains a maximal toral subalgebra of $\g$, then  it follows from \cite[Theorem~13.3]{Hum} and \cite[Ch.~II, \S\S~3 and 4]{Sel}, for example, that there exists a maximal torus $T$ of $G$ and a maximal root subsystem $\Psi$ of $\Phi(G,T)$ such that $\m$ is spanned by $\Lie(T)$ and the root spaces $ke_\gamma$ with $\gamma\in \Psi$; see \cite[2.5]{P17} for a related discussion. From this it is immediate that $\m=\Lie(M)$ for some maximal
regular subgroup $M$ of $G$. So from now on we shall always assume $\m$ is a non-regular Lie subalgebra of $\g$.
\subsection{Maximal Lie subalgebras with semisimple decomposable socles}\label{ss-dec}
Recall from \S~\ref{3.2} the definition of decomposable Lie algebras.
The main result of this subsection applies to all simple algebraic groups over algebraically closed fields $k$ of {\it very good} characteristic $p>3$.
Recall that $p$ is said to be very good for $G$ if $p$ is good for $G$ and
$p\nmid (\ell +1)$
if $G$ is a group of type ${\rm A}_\ell$.
\begin{prop}\label{decomp}
Let $G$ be a simple algebraic $k$-group of adjoint type and suppose that ${\rm char}(k)=p>3$ is a very good prime for $G$. Let $\m$ be a maximal semisimple Lie subalgebra of $\g=\Lie(G)$ such that
${\rm Soc}(\m)={S}_1\oplus\cdots\oplus {S}_r$
for some simple Lie subalgebras
$S_1, \ldots, S_r$ of $\g$, where $r\ge 2$. If $\m$ is non-regular then the following hold:
\begin{itemize}
\item [(i)\,] There exists a maximal connected subgroup $M$ of $G$ such that $\m=\Lie(M)$.

\smallskip

\item[(ii)\,] $M$ is a semisimple group of adjoint type and
the simple components $M_1,\ldots, M_r$ of $M$ have the property that
 $\Lie(M_i)\cong \Der(S_i)$ and $[\Lie(M_i),\Lie(M_i)]=S_i$
for all $i$.

\smallskip

\item[(iii)\,] The Lie algebra $\m$ is isomorphic to $\Der(S_1)\oplus\cdots\oplus\Der(S_r)$.

\end{itemize}

\end{prop}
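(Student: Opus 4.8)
The plan is to realise $\m$ as the Lie algebra of the identity component of a normaliser and then read off its structure. Write $\h:=\soc(\m)=S_1\oplus\cdots\oplus S_r$. Since $\h$ is a characteristic ideal of $\m$ we have $\m\subseteq\n_\g(\h)$, and $\n_\g(\h)\ne\g$ because $\g$ is simple and $\h=\g$ would force $r=1$; maximality of $\m$ therefore gives $\m=\n_\g(\h)$. The first substantial step is to pin down each $S_i$: it is a simple Lie subalgebra of $\g$, so by \cite[Theorem~1.3]{HSMax} it is $\Lie(\mathcal H_i)$ for a simple algebraic group $\mathcal H_i$, or $\psl_p$, or $W(1;\underline{1})$. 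I would eliminate the Witt case here: since $r\ge 2$ we have $0\ne\bigoplus_{j\ne i}S_j\subseteq\c_\g(S_i)$, while the description of embeddings of $W(1;\underline{1})$ in \cite{HSMax} forces a copy of the Witt algebra occurring in this way to have trivial centraliser. In the two remaining cases $\Der(S_i)$ is again the Lie algebra of a connected simple algebraic group (the adjoint group of $\mathcal H_i$, respectively $\PGL_p$), and $[\Der(S_i),\Der(S_i)]=S_i$.

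Next I would produce commuting simple algebraic subgroups $M_i\le G$ with $\Lie(M_i)\cong\Der(S_i)$. Fix $i$, set $\c_i:=\c_\g(S_i)$ and $\d_i:=\c_\g(\c_i)$. By the smoothness of the centralisers $C_G(V)$ of subspaces $V\subseteq\g$ (available since $p$ is very good for $G$), the groups $C_G(S_i)$ and $D_i:=C_G(\c_i)^\circ$ are smooth with Lie algebras $\c_i$ and $\d_i$; moreover $C_G(S_i)$, being the centraliser of a semisimple subalgebra, is reductive by the Bate--Martin--R\"ohrle--Tange results \cite{BMRT}, and hence so is $D_i$. Since $[S_i,\c_i]=0$ we have $S_i\subseteq\d_i$, and $\c_{\d_i}(S_i)=\d_i\cap\c_i=\z(\c_i)$ is toral. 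Using this, together with the fact that the minimal ideal $S_i$ of $\m$ lies inside the ideal $\m\cap\d_i$ of $\m$, one identifies $S_i$ with $[\Lie(M_i),\Lie(M_i)]$ for a unique simple factor $M_i$ of $D_i$, so that $\Lie(M_i)\cong\Der(S_i)$; because $[S_i,S_j]=0$ one moreover has $M_j\le C_G(S_i)$ and $M_i\le C_G(C_G(S_i))$, so the $M_i$ pairwise commute. Put $M:=M_1\cdots M_r$, a connected subgroup of $G$. Since $\Der(\bigoplus_i S_i)=\bigoplus_i\Der(S_i)$ (the $S_i$ being centreless ideals), one gets $\bigoplus_i S_i\subseteq\bigoplus_i\Der(S_i)\subseteq\n_\g(\h)=\m$ and $\Lie(M)=\bigoplus_i\Der(S_i)$, so $\Lie(M)=\m$ and $M\ne G$.

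Part (i) now follows formally: $\m=\Lie(M)$ with $M$ a proper connected subgroup, and if $M\subsetneq M'\subsetneq G$ were connected then $\m=\Lie(M)\subseteq\Lie(M')\subsetneq\g$ (the last inclusion because $\Lie(M')=\g$ would force $M'=G$, as in good characteristic $M'$ would then contain all root subgroups), contradicting maximality of $\m$; hence $M$ is maximal connected. For (ii): $\m$ semisimple means it has no nonzero solvable ideal, so $\Lie(R(M))=0$ and $R(M)=1$, i.e. $M$ is semisimple; and since each $\Lie(M_i)\cong\Der(S_i)$ is centreless, no simple factor of $M$ can be a non-adjoint group of type $\mathrm A_{p-1}$ (whose Lie algebra would have a centre), so $M$ is of adjoint type with simple components $M_i$, giving $\Lie(M_i)\cong\Der(S_i)$ and $[\Lie(M_i),\Lie(M_i)]=S_i$. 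Finally (iii) is immediate: $\m=\Lie(M)=\bigoplus_i\Lie(M_i)\cong\bigoplus_i\Der(S_i)$.

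The main obstacle, I expect, is the middle step: turning the abstract simple algebras $S_i$ into honest commuting algebraic subgroups of $G$ with the prescribed Lie algebras. This is exactly where one genuinely needs the smoothness of the centralisers $C_G(V)$ together with the separability and reductivity results of \cite{BMRT} — to know that $C_G(S_i)$ and its double centraliser are smooth and reductive, that $C_G(C_G(S_i))$ agrees with the centraliser of $\c_i$, and to control $\c_{\d_i}(S_i)$. The delicate bookkeeping is in showing that $S_i$ is a \emph{single} simple factor of $D_i$ (rather than, say, a diagonally embedded subalgebra) and that the inclusions $\bigoplus_i S_i\subseteq\m\subseteq\bigoplus_i\Der(S_i)$ are forced by $\m=\n_\g(\soc(\m))$; everything after that is formal.
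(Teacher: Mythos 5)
Your overall strategy (realise $\m$ as the Lie algebra of a product of commuting simple subgroups built from smooth centralisers) is the right one, but the two places you flag as ``delicate bookkeeping'' are exactly where your argument diverges from the paper's and where it has genuine gaps. First, the paper never forms the double centraliser $C_G(\c_\g(S_i))$: it sets $M_i:=C_G\bigl(\bigoplus_{j\ne i}S_j\bigr)^\circ$, so that $\m_i:=\Lie(M_i)=\c_\g\bigl(\bigoplus_{j\ne i}S_j\bigr)$ (by the smoothness result of Bate--Martin--R\"ohrle--Tange) automatically contains $S_i$, is an ideal of $\m$, and acts faithfully on $S_i$ because the semisimple algebra $\m$ acts faithfully on its socle while $\m_i$ kills the other summands. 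Faithfulness forces ${\rm rad}(\m_i)=0$, so $M_i$ is semisimple; Seligman's description of the ideals of $\Lie(M_i)$ then shows that $S_i$ is $M_i$-stable and classical, that $M_i$ is simple, and that $\m_i\cong\Der(S_i)$ by \cite[Lemma~2.7]{BGP}. This removes both of your difficulties at a stroke: you never need to locate $S_i$ as a \emph{single} simple factor of a double centraliser (a step you assert but do not carry out), and you never need the preliminary classification of the $S_i$ via \cite[Theorem~1.3]{HSMax} --- which is in any case only available for exceptional $G$, whereas the proposition is asserted for all simple types, and your dismissal of the case $S_i\cong W(1;\underline{1})$ by appeal to a ``trivial centraliser'' property is not something \cite{HSMax} provides.

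Second, your deduction of adjointness is incorrect: a centreless Lie algebra does not force the group to be of adjoint type, since the centre of $M_i$ can be a nontrivial \'etale group scheme with zero Lie algebra (for instance $\Lie(\SL_n)=\sl_n$ is centreless when $p\nmid n$, yet $\SL_n$ is not adjoint). This is precisely where the hypotheses you never use must enter: if $1\ne z\in Z(M)$ then $z$ is semisimple and fixes $\m\cong\Der(\soc(\m))$ pointwise, so $\m\subseteq\g^z\subsetneq\g$ (adjointness of $G$ gives $\g^z\ne\g$), whence $\m=\g^z$ by maximality --- but $\g^z$ is a regular subalgebra, contradicting the non-regularity of $\m$. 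Hence $Z(M)=\{1\}$, the $M_i$ pairwise commute, and the injective separable homomorphism $M\to\prod_{i}\Aut(S_i)^\circ$ is an isomorphism, which is what yields adjoint type. The fact that your write-up nowhere invokes the non-regularity of $\m$ is the tell-tale sign that this step is missing.
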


\begin{proof} For $1\le i\le r$, let $\m_i$ and $\tilde{M}_i$ denote the centralisers of $\bigoplus_{j\ne i}\,S_j$	
in $\g$ and $G$, respectively, and set $M_i:=(\tilde{M}_i)^\circ$.
By \cite[Theorem~1.2]{BMRT}, we have that $\Lie(M_i)=\m_i$ for all $i$.
Since $\bigoplus_{j\ne i}\,S_j$ is an ideal of $\m$ we have that $[\m,\m_i]\subseteq \m_i$.
Since $\g$ is a simple Lie algebra and $\m$ is a maximal subalgebra of $\g$,
each $\m_i$ is an ideal of $\m$ containing $S_i$. Set $\r_i:={\rm rad}(\m_i)$.
Since $\m$ is semisimple, it acts faithfully on its socle ${\rm Soc}(\m)$.
Therefore, each $\m_i$ acts faithfully on $S_i$.
If $\r_j\ne 0$ for some $j$ then $[\r_j,{S}_j]$ is a nonzero solvable ideal of $S_j$. Since this contradicts the simplicity of $S_j$ we deduce that each Lie algebra $\m_i$ is semisimple. It follows that each $M_i$ is a semisimple algebraic group.

Let $T_i$ be a maximal torus of $M_i$ and $\t_i=\Lie(T_i)$. Since $\m_i=\Lie(M_i)$ and
$S_i$ is an ideal of $\m_i$, we have that $[\t_i,S_i]\subseteq S_i$.
Since $p>3$ it follows from \cite[\S\S~3 and 4]{Sel} that $S_i$ is spanned by $\t_i\cap S_i$ and some root spaces $ke_\gamma$ with $\gamma\in \Phi(M_i,T_i)$ and any unipotent root subgroup $U_\beta$ of $M_i$ with $\beta\in\Phi(M_i,T_i)$ normalises $S_i$ (see \cite[2.2]{BGP} for a related discussion). This implies that $S_i$ is $M_i$-stable.
Furthermore, each simple Lie algebra $S_i$ satisfies the Mills--Seligman axioms; see \cite[2.2]{BGP} for detail. Since $\m_i$ acts faithfully on
$S_i$ this yields that each $M_i$ is a simple algebraic group.

Since $\m_i$ acts faithfully on its ideal $S_i$ it embeds into ${\rm Der}(S_i)$. It follows from \cite[Lemma~2.7]{BGP} that $\m_i\cong \Der(S_i)$ and the equality
$\Der(S_i)\,=\,\ad S_i$ holds if and only if $M_i$
is not of type ${\rm A}_{sp-1}$ for some $s\in\Z_{>0}$.
 As $\m_i\subseteq \m$ for all $i$ and $\m$ embeds into $\Der(S_1\oplus\cdots\oplus S_r)$ we now obtain that
$\m\,\cong\, \Der(S_1)\oplus\cdots\oplus\Der(S_r)$.
Note that \cite[Lemma~2.7]{BGP} also shows that if $S_i$ is of type ${\rm A}_{sp-1}$ then $S_i\cong \psl_{sp}(k)$ and
$\m_i\cong \Der({S_i})\cong \pgl_{ps}(k)$. In this case $S_i=[\m_i,\m_i]$ has codimension $1$ in $\m_i$. It follows that $[\m_i,\m_i]=S_i$ for all $i$.
	
Let $M$ denote the identity component of the normaliser $N_G(\rm{Soc}(\m))$. The above discussion shows that
$M_i\subset M$ for all $i$ and hence $\m=\bigoplus_{i=1}^r\,\m_i\subseteq \Lie(M)$.
The maximality of $\m$ in conjunction with the simplicity of $\g$ shows that $M$ is a maximal connected subgroup of $G$ and $\Lie(M)=\m$. Since $M$ is  connected  it fixes each simple ideal $S_i$ of ${\rm Soc}(\m)$ set-wise.
Hence each $M_i$ is a normal subgroup of $M$.
Since $\m$ is a semisimple Lie algebra, $M$ is a semisimple algebraic $k$-group.

If there is $1\ne z\in M$ which fixes ${\rm Soc}(\m)$ point-wise, then $\Ad\,\! z$ acts identically on $\m\cong\Der({\rm Soc}(\m))$. Hence $z$ lies in the kernel of the adjoint representation of $M$. Since the group $M$ is semisimple, it follows that $z$ is a semisimple element of $G$ contained in the centre of $M$.
Since $G$ is a group of adjoint type, we have that $\g^z\ne \g$ and hence $\m=\g^z$ by the maximality of $\m$.  Since $z$ is contained in a maximal torus of $G$ by \cite[11.11]{Borel}, this would imply that $\m$ is regular subalgebra of $\g$ contrary to our assumption.
We thus conclude that $M$ acts faithfully on ${\rm Soc}(\m)$ and $Z(M)=\{1\}$.

As a consequence, the groups $M_i$ pairwise commute.
Since $$\Lie(M)\,\cong\ \bigoplus_{i=1}^r\Lie(M_i)\,\cong\ \bigoplus_{i=1}^r \Der(S_i),$$ it is clear from our earlier remarks that the natural homomorphism of algebraic groups
$$ \mu\colon\, M\longrightarrow \,\prod_{i=1}^r {\rm Aut}(S_i)^\circ\,\cong\,\,\prod_{i=1}^r {\rm Aut}(\m_i)^\circ$$
is injective and separable. Hence $\mu$ is
an isomorphism of algebraic $k$-groups, implying that $M$ is a group of adjoint type and thereby completing the proof.
\end{proof}
	
\subsection{Restricted classical subalgebras of $\g$} \label{expsec}
In this subsection we assume that $G$ is a reductive algebraic $k$-group with a simply connected derived subgroup and the Lie algebra $\g=\Lie(G)$ admits a non-degenerate $G$-invariant symmetric bilinear form. Assume further that
${\rm char}(k)=p>3$ and $p$ is a good prime for $G$.
Let $\h$ be a simple restricted Lie subalgebra of $\g$ and suppose that
$\h$ is classical in the sense of Seligman, i.e. there is a simple algebraic $k$-group $\mathcal{H}$ of adjoint type such that $\h\cong [\Lie(\mathcal{H}),\Lie(\mathcal{H})]$ as Lie algebras.
Since $p>3$ the Lie algebra $\Lie(\mathcal{H})$ is simple unless $\mathcal{H}$ has type ${\rm A}_{rp-1}$ for some $r\ge 1$. In the latter case $\mathcal{H}={\rm PGL}_{rp}(k)$ and $\h=\psl_{rp}(k)$ has codimension $1$ in $\Lie(\mathcal{H})=\pgl_{rp}(k)$. The group $\mathcal{H}$ acts in $\h$ by Lie algebra automorphisms. Since $\z(\h)=0$ the $[p]$-structure of $\h$ is induced by that of $\g$.

Let $\OO_{\rm min}(\h)$ denote the minimal nonzero nilpotent $\mathcal{H}$-orbit
on $\h$. It consists of all nonzero elements $e\in\h$ such that
$[e,[e,\h]]=ke$. It is well known that each $e\in\OO_{\rm min}(\h)$
is conjugate to a long root element of $\h$ with respect to a maximal torus of $\mathcal{H}$. It follows that there is a
short $\Z$-grading
\begin{equation}
\label{Zgr}
\h=\h(-2)\oplus\h(-1)\oplus\h(0)\oplus\h(1)\oplus \h(2)\end{equation} of the Lie algebra $\h$ such that $\h(2)=ke$ and $\h_e$ is an ideal of codimension $1$ in the subalgebra $\bigoplus_{i\ge 0}\h(i)$ of $\h$. Furthermore, if $\rk(\mathcal{H})>1$ then $[\h(1),\h(1)]=\h(2)$. Since $p>3$ the $\mathcal{H}$-orbit $\OO_{\rm min}(\h)$ spans $\h$ (for $\h$ is an irreducible $\mathcal{H}$-module). Since $e^{[p]}\in\z(\h)$ our assumptions on $\h$ imply that $\OO_{\rm min}(\h)\subseteq \N_p(\g)$.
Since any $e\in\OO_{\rm min}(\h)$ is a $G$-unstable vector of $\g$, it admits an optimal cocharacter $\tau_e\in X_*(G)$ with the property that
$e\in\g(\tau_e,2)$; see \cite{P03} for detail. Since $e\in \N_p(\g)$ it follows from \cite{Sei} and \cite{McN04} that $\g(\tau_e,i)=0$ for all $i\ge 2p-1$.
\begin{prop}
\label{class}
Under the above assumptions on $\h$ suppose further that $\g(\tau_e,2p-2)=0$ for all $e\in\OO_{\rm min}(\h)$. Then $N_G(\h)$ acts irreducibly
on $\h$ and $\h\subseteq \Lie(N_G(\h))$.
\end{prop}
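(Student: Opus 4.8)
The plan is to build, for each minimal root element $e\in\OO_{\rm min}(\h)$, a one-parameter unipotent subgroup of $G$ lying in $N_G(\h)$ whose Lie algebra is $ke$, and then to deduce irreducibility of the $N_G(\h)$-action on $\h$ from the fact that $\OO_{\rm min}(\h)$ spans $\h$ and is a single $\mathcal H$-orbit. The first step is to apply Proposition~\ref{expo}: since $e\in\OO_{\rm min}(\h)\subseteq \N_p(\g)$ and $\g(\tau_e,i)=0$ for $i\ge 2p-1$ by \cite{Sei}, \cite{McN04}, and since by hypothesis also $\g(\tau_e,2p-2)=0$, all the ``higher'' endomorphisms $X^{(i)}$ with $i\ge p$ appearing in the exponential vanish on $\g$ (they raise $\tau_e$-weights by at least $i\ge p$, hence by at least $2p-2$ once one accounts for the grading, landing in the zero space). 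Thus the unipotent subgroup $\mathcal U_e=\{x_e(t)\}$ produced by Proposition~\ref{expo}, with $\Lie(\mathcal U_e)=ke$, satisfies $\Ad\,x_e(t)=\sum_{i=0}^{p-1}\tfrac{t^i}{i!}(\ad e)^i=\exp(t\,\ad e)$ as an honest finite sum.

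The key step is to show $\mathcal U_e\subseteq N_G(\h)$, i.e. that $\exp(t\,\ad e)$ preserves $\h$ for all $t$. For this I would use the short $\Z$-grading \eqref{Zgr}: $e\in\h(2)$, so $\ad e$ maps $\h(i)$ into $\h(i+2)$ inside $\h$, and since $\h$ itself is graded with $\h(j)=0$ for $|j|>2$ we get $(\ad e)^3(\h)=0$; hence $\exp(t\,\ad e)=\Id+t\,\ad e+\tfrac{t^2}{2}(\ad e)^2$ on $\h$, which visibly preserves $\h$. (Here I use $p>3$ so that $2$ is invertible.) Therefore $\mathcal U_e\subseteq N_G(\h)$, and in particular $\h$ is stable under $\exp(t\,\ad e)$ for every $t\in k$ and every $e\in\OO_{\rm min}(\h)$.

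Now I would derive irreducibility. Let $0\ne V\subseteq\h$ be an $N_G(\h)$-submodule. Since $\h$ is a simple Lie algebra with $\z(\h)=0$ and $\OO_{\rm min}(\h)$ spans $\h$, it suffices to show $V$ contains some element of $\OO_{\rm min}(\h)$; indeed, $\OO_{\rm min}(\h)$ is a single orbit under $\mathcal H$, but more to the point $N_G(\h)$ contains the connected subgroup generated by all $\mathcal U_e$ with $e\in\OO_{\rm min}(\h)$, and this subgroup acts on $\h$ by automorphisms. To produce a minimal element in $V$: pick $0\ne v\in V$ and $e\in\OO_{\rm min}(\h)$ with $[e,v]\ne0$ (possible since $\z(\h)=0$ and $\OO_{\rm min}(\h)$ spans $\h$); then $\exp(t\,\ad e)(v)-v=t[e,v]+\tfrac{t^2}{2}(\ad e)^2(v)\in V$ for all $t$, and varying $t$ (again $p>3$) shows $[e,v]\in V$ and $(\ad e)^2(v)\in V$. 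If $(\ad e)^2(v)\ne 0$ it is a nonzero multiple of $e$ (as $[e,[e,\h]]=ke$), so $e\in V$ and we are done. Otherwise iterate: replace $v$ by $[e,v]$, a nonzero element of $V$ with $(\ad e)[e,v]\in ke$; either this is nonzero, giving $e\in V$, or $[e,v]\in\h_e\cap\ker(\ad e)$, and a short argument using the $\sl_2$-theory attached to $e$ (the grading \eqref{Zgr} together with $[\h(1),\h(1)]=\h(2)$ when $\rk(\mathcal H)>1$, and a direct check in the rank-one case) forces some further application of various $\ad e'$ ($e'\in\OO_{\rm min}(\h)$) to move $v$ onto a long root vector inside $V$. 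This gives $\OO_{\rm min}(\h)\cap V\ne\emptyset$, hence $V=\h$, so $N_G(\h)$ acts irreducibly; and $\h\subseteq\Lie(N_G(\h))$ because $\Lie(N_G(\h))\supseteq\Lie(\mathcal U_e)=ke$ for all $e\in\OO_{\rm min}(\h)$ and these span $\h$.

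The main obstacle I anticipate is the last combinatorial point: showing that starting from an arbitrary $0\ne v\in\h$ and repeatedly applying the operators $\ad e$ ($e\in\OO_{\rm min}(\h)$) one can reach a nonzero long root vector while staying inside any $N_G(\h)$-submodule $V$. The clean way around it is to observe that $\OO_{\rm min}(\h)$ spans $\h$, so the associative subalgebra of $\End(\h)$ generated by $\{\ad e:e\in\OO_{\rm min}(\h)\}$ is all of $\ad\h$'s enveloping image, which acts irreducibly on $\h$ since $\h$ is simple; combining this with the fact that each $\exp(t\,\ad e)$ is a polynomial of degree $\le 2$ in $\ad e$ with invertible leading coefficient (so $\ad e$ and $(\ad e)^2$ individually preserve $V$) shows $V$ is an $\ad\h$-submodule, hence $V=\h$. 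This avoids any delicate case analysis and is where I would focus the write-up.
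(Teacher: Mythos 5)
Your overall strategy -- exponentiate each $e\in\OO_{\rm min}(\h)$ to a one-parameter unipotent subgroup lying in $N_G(\h)$, then use that $\OO_{\rm min}(\h)$ spans (indeed generates) $\h$ -- is exactly the paper's, and your final ``clean way'' of deducing irreducibility (each $\ad e$ and $(\ad e)^2$ preserve any $N_G(\h)$-submodule, so the submodule is an ideal of the simple algebra $\h$) is fine. But there is a genuine gap at the crucial step. You assert that the higher endomorphisms $X^{(i)}$ with $i\ge p$ vanish because they raise $\tau_e$-weights by at least $2i\ge 2p$ and so ``land in the zero space.'' That conclusion only holds on weight spaces $\g(\tau_e,j)$ with $j\ge -2$: since the nonzero weights of $\tau_e$ on $\g$ go down to $-(2p-3)$, a vector of weight $j\le -3$ can be sent by $X^{(p)}$ into a weight space of weight $j+2p\le 2p-3$, which the hypothesis $\g(\tau_e,2p-2)=0$ does not kill. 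So the $X^{(i)}$, $i\ge p$, need not vanish on $\g$, and you have not shown they vanish on $\h$ either. Your observation that $(\ad e)^3$ annihilates $\h$ (from the internal short grading \eqref{Zgr}) controls only the terms $i\le p-1$, which are literal divided powers of $\ad e$; it says nothing about the tail $\sum_{i\ge p}t^iX^{(i)}$, which is not a polynomial in $\ad e$. Without controlling that tail you cannot conclude $\Ad\,x_e(t)(\h)\subseteq\h$, i.e.\ $U_e\subseteq N_G(\h)$.

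The missing ingredient is precisely the claim the paper proves first: $\h\subseteq\bigoplus_{i\ge -2}\g(\tau_e,i)$. This is not automatic, because the internal grading element $h$ of the $\sl_2$-triple through $e$ in $\h$ need not agree with $h_{\tau_e}$. The paper's argument: since $\g_e\subseteq\bigoplus_{i\ge 0}\g(\tau_e,i)$, the map $(\ad e)^2$ is injective on $\bigoplus_{i\le -3}\g(\tau_e,i)$; on the other hand $(\ad e)^2(\h)=ke\subseteq\g(\tau_e,2)$, so writing $v\in\h$ as $v_1+v_2$ with $v_1$ the component in degrees $\le -3$, one finds $(\ad e)^2(v_1)\ne 0$ would contribute in degrees $\le 1$, contradicting $(\ad e)^2(v)\in\g(\tau_e,2)$. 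Once this is in place, for $v\in\h$ and $i\ge p$ one gets $X^{(i)}(v)\in\bigoplus_{j\ge -2+2p}\g(\tau_e,j)=0$, and the rest of your argument goes through. You should insert this claim (and its proof) before invoking the exponentials; as written, the step ``$\Ad\,x_e(t)=\exp(t\,\ad e)$ as an honest finite sum'' is unjustified.
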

\begin{proof}
Let $e\in\OO_{\rm min}(\h)$. Then $(\ad e)^2(\h)=ke \subseteq \g(\tau_e,2)$. We claim that $\h\subseteq
\bigoplus_{i\ge -2}\,\g(\tau_e, i)$.
Indeed, suppose the contrary. Then there is $v=v_1+v_2\in\h$ such that
$0\ne v_1\in\bigoplus_{\le -3}\,\g(\tau_e,i)$ and $v_2\in\bigoplus_{\ge -2}\,\g(\tau_e,i)$.
Since $e\in\g(\tau_e,2)$, the endomorphism $\ad e$ sends each graded component $\g(\tau_e,i)$ to $\g(\tau_e,i+2)$.  Since $\g_e\subseteq\bigoplus_{i\ge 0}\,\g(\tau_e,i)$ by \cite[Theorem~A(i)]{P03}, the map $(\ad e)^2$ is injective on
$\bigoplus_{i\le -3}\,\g(\tau_e,i)$.
On the other hand, $$(\ad e)^2(v)\in \g(\tau_e,2)\cap \big((\ad e)^2(v_1)+\textstyle{\bigoplus}_{i\ge 0}\,\g(\tau_e,i)\big).$$ As $(\ad e)^2(v_1)\ne 0$ this is impossible, whence the claim.
By \cite[Proposition~33]{McN04}, the group $G$ contains a one-parameter unipotent subgroup $U_e=\{x_e(t)\,|\,\,t\in k\}$ normalised by $\tau_e(k^\times)$ and such that $e\in\Lie(U_e)$. It is immediate from the construction in {\it loc.\,cit.} that there are endomorphisms $X_e^{(i)}\in\big(\End(\g)\big)(\tau_e,2i)$ with $0\le i\le 2(p-1)$ such that $X_e^{(i)}=\frac{1}{i!}(\ad e)^{i}$ for $0\le i\le p-1$ and
$$\big({\rm Ad}\,x_e(t)\big)(v)\,=\,\textstyle{\sum}_{i=0}^{2p-2}\,t^iX_e^{(i)}(v)\qquad\ (\forall\,v\in\g).$$

As $\g(\tau_e, 2p-2)=0$ and $\h\subseteq \bigoplus_{i\ge -2}\,\g(\tau_e,i)$ by our earlier remarks, we have that
$$(\Ad\,x_e(t)\big)(v)\,=\,\sum_{i=0}^{p-1}\frac{1}{i!}(\ad e)^i(v)\in\h\qquad\,(\forall\, v\in\h).$$
This shows that $U_e\subset N_G(\h)$ for all $e\in\OO_{\rm min}(\h)$. Since $e\in \Lie(U_e)$ and $\OO_{\rm min}(\h)$ generates the Lie algebra $\h$ we now deduce that $\h\subseteq \Lie(N_G(\h))$. Since $\h$ is a simple Lie algebra, it follows that $N_G(\h)$ acts irreducibly on $\h$. This completes the proof.
\end{proof}
\subsection{Maximal subalgebras with simple socles of toral rank at least two}\label{rk>1}
In this subsection we assume that $G$ is an exceptional simple algebraic $k$-group and $p={\rm char}(k)$ is a good prime for $G$. Our goal here is classify maximal Lie subalgebras $\m$ of $\g=\Lie(G)$ whose socles are
a simple Lie algebras of toral rank at least two.
By \cite[Theorem~1.3]{HSMax}, this assumption on $\m$ implies that $${\rm Soc}(\m)\subseteq \m\subseteq
\Der({\rm Soc}(\m))$$ and there is a simple algebraic $k$-group $\mathcal{H}$ of adjoint type such that $\rk(\mathcal{H})\ge 2$ and ${\rm Soc}(\m)\cong[\Lie(\mathcal{H}),\Lie(\mathcal{H})]$ as Lie algebras. In view of \cite[Lemma~2.7]{BGP} this implies that $\Der(\m)\cong \Lie(\mathcal{H})$ as restricted Lie algebras and either $\m={\rm Soc}(\m)$ or $\mathcal{H}\cong{\rm PGL}_{kp}(k)$ for some $r\ge 1$ and $\m\cong\Lie(\mathcal{H})\cong{\pgl}_{kp}(k)$. In any event, ${\rm Soc}(\m)=[\m,\m]$
 is a restricted Lie subalgebra of $\g$.

 Set $\h:={\rm Soc}(\h)=[\m,\m]$. The main difference with the toral rank $1$ case (to be treated in the next subsection) is that for any $e\in \OO_{\rm min}(\h)$ the grading (\ref{Zgr}) has the property that
 $e\in [\h(1),\h(1)]$. Since $\h(1)\subset \h_e$, it follows that $e\in [\g_e,\g_e]$. As $e^{[p]}=0$, any
 $e\in\OO_{\rm min}(\h)$ is a reachable element of $\N_p(\g)$ in the sense of \cite[Theorem~1.1]{PSt}. We shall rely in a crucial way on the classification of reachable elements of $\N(\g)$ obtained in \emph{loc.~cit.} and combine it with the results of \S~\ref{expsec} to infer that all elements of $\OO_{\rm min}(\h)$ can be exponentiated into one-parameter unipotent subgroups of $G$ contained in the normaliser $N_G(\h)$. The maximality of $\m$ will then imply that $\m=\Lie(N_G(\m))$. As a result, $N_G(\m)$ must be simple and maximal amongst connected closed subgroups of $G$.
\begin{prop}\label{rkge2} Suppose $\m$ is a maximal subalgebra of $\g$ whose socle is a simple Lie algebra of toral rank at least two. Then $\m=\Lie(M)$ for some maximal connected subgroup $M$ of $G$. Moreover, if $G$ is a group of adjoint type and $\m$ is not a regular subalgebra of $\g$, then $M$ is a simple algebraic $k$-group of adjoint type.
\end{prop}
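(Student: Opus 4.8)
The plan is to run the strategy outlined in the preamble of \S\ref{rk>1}: show that every element of $\OO_{\rm min}(\h)$, where $\h:={\rm Soc}(\m)=[\m,\m]$, can be exponentiated into a one-parameter unipotent subgroup of $G$ contained in $N_G(\h)$, conclude that $\h\subseteq \Lie(N_G(\h))$, and then invoke the maximality of $\m$. First I would record the structural facts already assembled: by \cite[Theorem~1.3]{HSMax} we have $\h\subseteq\m\subseteq\Der(\h)$, $\h$ is a restricted subalgebra of $\g$, there is a simple adjoint $k$-group $\mathcal{H}$ with $\rk(\mathcal{H})\ge 2$ and $\h\cong[\Lie(\mathcal{H}),\Lie(\mathcal{H})]$, and every $e\in\OO_{\rm min}(\h)$ is a long root element with $e^{[p]}=0$, sitting in a short $\Z$-grading \eqref{Zgr} with $e\in[\h(1),\h(1)]\subseteq[\g_e,\g_e]$; hence $e$ is a reachable element of $\N_p(\g)$. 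The heart of the argument is then to verify the hypothesis $\g(\tau_e,2p-2)=0$ of Proposition~\ref{class}. For this I would appeal to the classification of reachable nilpotent orbits in exceptional $\g$ from \cite[Theorem~1.1]{PSt}: going through that (short) list of orbits together with the weighted Dynkin diagram data in \cite{Car} (or the $\g_e$-grading tables of \cite{LT11}), one checks in each case that the top graded piece $\g(\tau_e,2p-2)$ vanishes, using that $p$ is good and that reachable orbits are ``small'' (their weighted Dynkin diagrams have small support). Once this is in place, Proposition~\ref{class} gives directly that $\h\subseteq\Lie(N_G(\h))$ and that $N_G(\h)$ acts irreducibly on $\h$.

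Next I would set $M:=N_G(\h)^\circ$ and argue that $\m=\Lie(M)$. Since $\h$ is $M$-stable and $\m=\n_\g(\h)$ (as $\m$ lies between $\h$ and $\Der(\h)$ and is maximal), we get $\m\supseteq\h$ and $\Lie(M)\subseteq\n_\g(\h)=\m$ because $\Lie(N_G(\h))$ normalises $\h$; conversely $\h\subseteq\Lie(M)$ by the previous paragraph, so $\Lie(M)$ is a Lie subalgebra of $\g$ strictly containing the simple algebra $\h$ and contained in $\Der(\h)\cong\Lie(\mathcal{H})$, hence either $\Lie(M)=\h$ or $\Lie(M)=\pgl_{rp}(k)$; in both cases $\Lie(M)$ normalises $\h$, and by maximality of $\m$ and $\Lie(M)\subseteq\m\subseteq\g$ with $\Lie(M)\ne\g$ (it is proper since $\h\subseteq\Lie(M)\subseteq\Der(\h)$ is finite-dimensional of dimension bounded by $\dim\Lie(\mathcal H)<\dim\g$) we obtain $\Lie(M)=\m$. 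Then $M$ is a maximal connected subgroup of $G$: any connected $M\subsetneq M'\subsetneq G$ would give $\m=\Lie(M)\subseteq\Lie(M')\subsetneq\g$ with $\Lie(M')\ne\m$ (if $\Lie(M')=\m$ then $M'$ normalises $\h$, so $M'\subseteq N_G(\h)$ and $M'^\circ=M$), contradicting maximality of $\m$. So $M$ is maximal connected and $\m=\Lie(M)$.

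Finally, for the adjoint, non-regular case I would show $M$ is simple of adjoint type. Since $\Lie(M)=\m$ with $[\m,\m]=\h$ simple, $M$ is semisimple (no nontrivial connected normal solvable subgroup, else its Lie algebra would be a nonzero solvable ideal of $\m$, impossible as $\m\subseteq\Der(\h)$ with $\h$ simple). If $M$ had two commuting simple factors then $\m=\Lie(M)$ would decompose as a direct sum of two nonzero ideals, contradicting the fact that $\h=[\m,\m]$ is simple (a simple ideal cannot surject onto two independent factors); so $M$ is simple. For adjointness: if $1\ne z\in Z(M)$ then $z$ is semisimple and acts trivially on $\m\cong\Der(\h)$ hence on $\g$ is non-trivial only on a proper subspace, so $\g^z\ne\g$; by maximality $\m=\g^z$, which is a regular subalgebra since $z$ lies in a maximal torus of $G$ (\cite[11.11]{Borel}), contradicting the non-regularity hypothesis. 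Hence $Z(M)=\{1\}$ and $M$ is of adjoint type.

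The main obstacle I expect is the case-by-case verification of $\g(\tau_e,2p-2)=0$ for all reachable $e\in\OO_{\rm min}(\h)$: this requires extracting exactly which nilpotent orbits can carry a simple classical subalgebra of toral rank $\ge 2$ with all long-root elements reachable, then checking the top weight-space of each optimal cocharacter. Everything after that — feeding into Proposition~\ref{class} and the group-theoretic bookkeeping — is essentially formal given the earlier results in the paper.
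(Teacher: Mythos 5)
Your overall strategy is exactly the paper's: identify $\OO_{\rm min}(\h)$ as reachable elements of $\N_p(\g)$, verify the hypothesis of Proposition~\ref{class}, conclude $\h\subseteq\Lie(N_G(\h))$, and finish with the maximality and adjointness bookkeeping (your final two paragraphs match the paper's argument essentially verbatim, including the $\psl_{rp}$ versus $\pgl_{rp}$ point and the use of \cite[11.11]{Borel} for the centre). However, there is a genuine gap at the step you yourself flag as the main obstacle: it is \emph{not} true that $\g(\tau_e,2p-2)=0$ for every reachable orbit meeting $\OO_{\rm min}(\h)$. The case-by-case check against \cite[Theorem~1.1]{PSt} and the tables of \cite{LT11} leaves exactly one exception: $G$ of type ${\rm E}_7$, $p=5$, and $e\in\OO({\rm A}_4{\rm A}_1)$, where the top graded piece $\g(\tau_e,2p-2)=\g(\tau_e,8)$ is nonzero. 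Your plan, as written, asserts the vanishing across the board and therefore has nothing to say about this orbit; Proposition~\ref{class} simply does not apply to it.

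The paper closes this hole with a separate argument: for that orbit the reductive part of $G_e$ is a $2$-dimensional torus, so $\h(0)\oplus\h(1)\oplus\h(2)$ is solvable (it lies in $\n_\g(ke)=\Lie(\tau_e(k^\times))\oplus\g_e$), and since $e$ is a long root element this forces $\mathcal{H}$ to be of type ${\rm A}_2$ and $\m=\h\cong\Lie(\mathcal{H})$; this precise configuration is then handled by \cite[Lemma~4.9]{ST}, which shows such an ${\rm A}_2$-subalgebra is $\Lie(M)$ for a maximal connected subgroup $M$ of type ${\rm A}_2$. You would need to supply this (or an equivalent) argument for the exceptional orbit before your proof is complete.
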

\begin{proof}
As $\m$ is maximal in $\g$ and $\g$ is simple, it is immediate from the preceding discussion that $\h={\rm Soc}(\m)$ is a restricted Lie subalgebra of $\g$.
As $\m\subseteq \Der(\h)$ has toral rank at least $2$, applying \cite[Theorem~1.3]{HSMax}
yields that $\h\cong[\Lie(\mathcal{H}),\Lie(\mathcal{H})]$ for some simple algebraic $k$-group $\mathcal{H}$ of adjoint type with $\rk(\mathcal{H})\ge 2$. It follows that for any $e\in\OO_{\rm min}(\h)$ the $\Z$-grading
(\ref{Zgr}) has the property that $\h(1)\subset\g_e$ and  $e\in [\h(1),\h(1)]$. As a consequence, the set $\OO_{\rm min}(\h)$ consists of reachable elements of $\g$ contained in $\N_p(\g)$.
Let $e\in\OO_{\rm min}(\h)$. A quick look at the list of such elements in \cite[Theorem~1.1]{PSt} together with the list of integers $j$ for which $\g_e(\tau_e,j)\neq 0$ in the tables in \cite{LT11} reveals that either $\g(\tau_e,2p-2)=0$ or we must have $G$ of type ${\rm E}_7$, $p=5$, and $e\in\OO({\rm A}_4{\rm A}_1)$ (one should keep in mind here that if $\g(\tau_e,m)\ne 0$ and $\g(\tau_e,i)=0$ for $i>m$ then $\g(\tau_e,m)=\g_e(\tau_e,m)$).

If $\g(\tau_e,2p-2)=0$ for all $e\in\OO_{\rm min}(\h)$ then Proposition~\ref{class} applies to $\h$ forcing
$\h\subseteq \Lie(N_G(\h))$. The maximality of $\m$ then yields $\Lie(N_G(\h))\subseteq \n_\g(\h)\subseteq \m$. If $\h$ is not isomorphic to $\psl_{rp}(k)$ then $\h\cong\Der(\h)$ and hence
$\m=\Lie(N_G(\h))$. If $\h\cong \psl_{rp}(k)$ then $\h\cong \ad \h$ has codimension $1$ in $\Der(\h)\cong\pgl_{rp}(k)$ and $$\h\subseteq \Lie(N_G(\h))\subseteq \m\subseteq \Der(\h).$$ Being the Lie algebra of an algebraic $k$-group, $\Lie(N_G(\h))$ cannot be isomorphic to $\psl_{rp}(k)$. Therefore,
the equality $\m=\Lie(N_G(\h))$ holds in all cases, and we can set $M:=N_G(\h)^\circ$. The maximality of $\m$ then shows that $M$ is a maximal connected subgroup of $G$.

If $\g(\tau_e,2p-2)\ne 0$ for some $e\in\OO_{\rm min}(\h)$ then $G$ is of type ${\rm E}_7$, $p=5$, and
$e\in\OO({\rm A}_4{\rm A}_1)$.
The relevant table in \cite{LT11} says that the identity component of the reductive part of $G_e$ is a $2$-dimensional torus. Note that $\g_e=\Lie(G_e)$ and $\h(0)$ normalises $\h(2)=ke$. As
$\n_\g(ke)=\Lie(\tau_e(k))\oplus \g_e$ we see that $\h(0)\oplus\h(1)\oplus\h(2)$ is a solvable Lie algebra.
Since $\h\cong [\Lie(\mathcal{H}),\Lie(\mathcal{H})]$ is a simple Lie algebra and $e$ is a long root element of $\m\cong \Lie(\mathcal{H})$, the classification of irreducible root systems
now yields that $\mathcal{H}$ is a group of type ${\rm A}_2$. Since $p>3$, it follows that
$\h=\m\cong\Lie(\mathcal{H})$.
This precise case was tackled in \cite[Lemma~4.9]{ST} where it was proved that a subalgebra $\h$ of type
${\rm A}_2$ in $\g$ with $\OO_{\rm min}(\h)\cap\OO({\rm A}_4{\rm A}_1)\ne \emptyset$ is maximal in $\g$ and has form $\h=\Lie(M)$ for some maximal connected subgroup $M$ of type ${\rm A}_2$ in $G$.

We thus deduce that in all cases $\m=\Lie(M)$ for some maximal connected subgroup $M$ of $G$. Suppose $G$ is a group of adjoint type and $\m$ is not a regular subalgebra of $\g$.
Since $\m$ is semisimple and its derived subalgebra is simple, it is straightforward to see that $M$ is a simple algebraic group. Therefore,
the centre of $M$ is a finite group consisting of semisimple elements of $G$.
As $\m$ is not a regular subalgebra of $\g$ and $Z(M)$ fixes $\m$ point-wise, it must be that $Z(M)=Z(G)=\{1\}$.
Since $\m=\Lie(M)$ acts faithfully on $\h=[\m,\m]$, the natural homomorphism $\psi\colon\, M\to {\rm Aut}(\h)^\circ$ is bijective and its differential ${\rm d}_e\psi\colon\, \m\to \Der(\h)$ is an isomorphism of Lie algebras. This implies that $M\cong {\rm Aut}(\h)^\circ$ is a group of adjoint type finishing the proof.
\end{proof}

\subsection{Maximal subalgebras with simple socles of toral rank one}

The goal of this subsection is to finish the proof of Theorem \ref{classicalthm}. In view of Propositions~\ref{decomp} and ~\ref{rkge2} we  may assume that  ${\rm Soc}(\m)$ is a simple Lie algebra of toral tank one
and ${\rm Soc}(\m)\subseteq \m\subseteq \Der({\rm Soc}(\m))$.
Thanks to \cite[Theorem~1.3]{HSMax} this implies that  either ${\rm Soc}(\m)\cong\sl_2(k)$ or ${\rm Soc}(\m)\cong W(1;\underline{1})$. Since it is well known (and easy to check) that all derivations of the Lie algebras $\sl_2(k)$ and $W(1;\underline{1})$ are inner, we have that $\m={\rm Soc}(\m)$. If $\m$ is a maximal Witt subalgebra of $\g$ then \cite[Theorem~1.1]{HSMax} says that $G$ is not of type ${\rm E}_6$, $p-1$ is the Coxeter number of $G$, and $\m$ is $(\Ad\,G)$-conjugate to the Lie subalgebra of $\g$ generated by a root vector $e_{\tilde{\alpha}}$ and a regular nilpotent element   $\sum_{\alpha\in\Pi}\, e_{-\alpha}$. Here $\Pi$ is a basis of simple roots
in the roots system $\Phi(G,T)$ with respect to a maximal torus $T$ of $G$ and $\tilde{\alpha}$ is the highest root of $\Phi(G,T)$ with respect to $\Pi$.

The above discussion shows that in order to finish the proof of Theorem~\ref{classicalthm} it suffices to establish the following:
\begin{prop} Let $G$ be a simple algebraic $k$-group and suppose that
	$p={\rm char}(k)$ is a very good prime for $G$.
	If $\m$ is a maximal Lie subalgebra of $\g=\Lie(G)$ isomorphic to $\sl_2(k)$, then there exists a maximal connected subgroup $M$ of type ${\rm A}_1$ in $G$ such that $\m=\Lie(M)$.
\end{prop}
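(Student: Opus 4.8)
The plan is to reduce the statement to exhibiting a proper closed connected subgroup of $G$ whose Lie algebra contains $\m$; maximality of $\m$ then forces equality and identifies the type of the group.

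First I would fix the set-up. Being maximal, $\m$ is a restricted subalgebra of $\g$, and since $\z(\m)=0$ the $[p]$-operation of $\g$ restricts to the unique restricted structure of $\sl_2(k)$; hence a standard basis $\{e,h,f\}$ of $\m$ satisfies $e^{[p]}=f^{[p]}=0$ and $h^{[p]}=h$, so that $\{e,h,f\}$ is an $\sl_2$-triple of $\g$ with $e,f\in\N_p(\g)\subseteq\N(\g)$ and $h$ toral. I claim it suffices to produce a proper closed connected subgroup $H\le G$ with $\m\subseteq\Lie(H)$. Indeed, then $\dim\Lie(H)\le\dim H<\dim G$, so $\Lie(H)$ is a proper subalgebra of $\g$ and maximality of $\m$ gives $\m=\Lie(H)$; choosing a maximal connected subgroup $M$ of $G$ with $H\subseteq M$ we still have $\m=\Lie(M)$ by the same dimension count; and $M$ is of type ${\rm A}_1$, because its radical is a torus whose Lie algebra is a solvable ideal of $\m\cong\sl_2(k)$, hence $0$ (as $p>3$), so $M$ is semisimple, while the toral rank of $\sl_2(k)$ being $1$ forces $\rk M=1$.

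Next I would invoke the analysis of \S~\ref{3.4}. Replacing $\m$ by a $G$-conjugate we may assume $e\in\g(\tau,2)$ for an optimal cocharacter $\tau$ of $e$ with $\g_e\subseteq\Lie(P(e))$. By that analysis (which rests on the computations of \cite{P95}), either the $\sl_2$-triple $\{e,h,f\}$ is standard, or $e\in\OO({\rm A}_{p-1}{\rm A}_r)$ for some $r\ge 0$; by Remark~\ref{nonstandard} the second alternative forces $G$ to be of type ${\rm E}$ with $p\in\{5,7\}$. (When $G$ is of classical type the statement is more elementary, argued via the natural module of $G$; and ${\rm G}_2$ and ${\rm F}_4$ admit no orbit $\OO({\rm A}_{p-1}{\rm A}_r)$ once $p>3$, so for those the triple is automatically standard.) In the standard case we may assume $\{e,h,f\}$ is an $\sl_2$-triple of the shape constructed in \S~\ref{3.4}; since $e^{[p]}=0$, the theorem of McNinch \cite{McN04} recalled there produces a connected subgroup $\mathcal S$ of type ${\rm A}_1$ in $G$ with $\Lie(\mathcal S)=ke\oplus kh\oplus kf=\m$, and we take $H=\mathcal S$.

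The remaining case, which I expect to be the main obstacle, is the non-standard one. Here $e$ is a regular nilpotent element of a Levi subalgebra of type ${\rm A}_{p-1}{\rm A}_r$, so that $\g(\tau_e,2p-2)\ne 0$ and Proposition~\ref{class} is of no help; the possibilities are only the finitely many orbits $\OO({\rm A}_4)$, $\OO({\rm A}_4{\rm A}_1)$, $\OO({\rm A}_4{\rm A}_2)$ in ${\rm E}_6$ or ${\rm E}_7$ (for $p=5$) and $\OO({\rm A}_6)$, $\OO({\rm A}_6{\rm A}_1)$ in ${\rm E}_7$ or ${\rm E}_8$ (for $p=7$). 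For each of these I would proceed case by case, in the manner of \S~\ref{3.5} and Corollary~\ref{nonmax}: using the explicit descriptions of $\g_e$ and of the reductive group $C_e$ in \cite{LT11}, together with the weighted Dynkin diagrams in \cite{Car}, one locates the semisimple element $h-h_\tau\in\c_e$ and shows that $\{e,h,f\}$ is $(\Ad\,G)$-conjugate into the Lie algebra of a proper reductive subgroup of $G$ — a subsystem subgroup, or the fixed-point subgroup $G^\sigma$ of an involution $\sigma$ exactly as in Corollary~\ref{nonmax} — so that $\m$ cannot be maximal and this case does not arise. (Alternatively one may quote \cite{ST}, where the subalgebras of type ${\rm A}_1$ whose minimal nilpotent orbit meets $\OO({\rm A}_{p-1}{\rm A}_r)$ are shown to be non-maximal.) Combined with the standard case, this completes the proof.
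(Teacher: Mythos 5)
Your reduction and your treatment of the standard case are fine (and in fact take a different route from the paper: the paper does not invoke McNinch's ${\rm A}_1$-overgroups at all, but instead shows, via the exponentiation result Proposition~\ref{class}, that $\m\subseteq\Lie(N_G(\m))$ whenever $\g(\tau_e,2p-2)=0$ for all $e\in\OO_{\rm min}(\m)$). The genuine gap is your non-standard case. There you only describe a programme — ``for each of these I would proceed case by case \dots and show that $\{e,h,f\}$ is $(\Ad\,G)$-conjugate into the Lie algebra of a proper reductive subgroup'' — without carrying out a single one of these verifications, and it is far from clear that the programme succeeds: for a non-standard triple the element $h-h_\tau$ is a nonzero element of $\c_e$, and producing a proper reductive overgroup of the whole triple (not just of $e$ or of $h$) requires genuinely new computations in each orbit $\OO({\rm A}_{p-1}{\rm A}_r)$. (Your list of such orbits is also incomplete — e.g.\ ${\rm A}_4{\rm A}_2{\rm A}_1$ in ${\rm E}_7$ for $p=5$ is missing — and the appeal to \cite{ST} is not supported by what that paper proves; \cite[Lemma~4.9]{ST} concerns ${\rm A}_2$-subalgebras meeting $\OO({\rm A}_4{\rm A}_1)$, not ${\rm A}_1$-subalgebras meeting $\OO({\rm A}_{p-1}{\rm A}_r)$.)

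The idea you are missing is the paper's uniform disposal of the bad case, which requires no case analysis at all. If some $e\in\OO_{\rm min}(\m)$ has $\g(\tau_e,2p-2)\ne 0$ (which covers every non-standard triple, since $e\in\OO({\rm A}_{p-1}{\rm A}_r)$ forces a $\tau_e$-weight $2(p-1)$), one takes an $\ad h$-eigenvector $v$ in the top piece $\g(\tau_e,2p-2)$ and sets $V=\sum_{i=0}^{p-1}k(\ad f)^i(v)$. Since $f\in\bigoplus_{i\ge-2}\g(\tau_e,i)$, one gets $V\subseteq k(\ad f)^{p-1}(v)+\bigoplus_{i>0}\g(\tau_e,i)$, so the subalgebra $\langle V\rangle$ generated by $V$ is contained in $kw+\n_+$ with $w\in\g(\tau_e,0)$ and hence is solvable and nonzero; as $\m$ normalises $V$, it normalises $\langle V\rangle$, and $\m+\langle V\rangle$ then has a nonzero solvable ideal, contradicting the maximality of $\m$ together with the simplicity of $\g$ and of $\sl_2(k)$. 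Without this (or a completed case-by-case analysis), your proof is incomplete precisely at the step you yourself identify as the main obstacle.
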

\begin{proof}
As $\m$ is a restricted Lie subalgebra of $\g$ we have that $\OO_{\rm min}(\m)\subset \N_p(\g)$. If $\g(\tau_e,2p-2)=0$ for all
$e\in\OO_{\rm min}(\m)$, we may invoke Proposition~\ref{class} to conclude that $\m \subseteq\Lie(N_G(\m))$. It should be mentioned here that
that the assumption that $p>3$ imposed in \S~\ref{rk>1} can
be dropped in the present case because all derivations of $\m$ are inner.
The maximality of $\m$ then yields that $\m =\Lie(N_G(\m))$ and $M:=N_G(\m)^\circ$ is a maximal connected subgroup of $G$.

 So we may assume from now that $\m$ is spanned by an $\sl_2$-triple
 $\{e,h,f\}$ and $e\in \OO_{\rm min}(\m)$ has the property that
 $\g(\tau, 2p-2)\neq 0$, where $\tau=\tau_e$.
 Since $e^{[p]}=0$, it follows from \cite{Sei} and \cite{McN04} that $\g(\tau,i)=0$ for all  $i>2p-2$. Since
 $\g_e\subseteq \bigoplus_{i\ge 0}\,\g(\tau,i)$ by \cite[Theorem~A(i)]{P03}, it follows that the subspace $\g(\tau,2p-2)$ is $(\ad \g_e)$-stable.

 Recall the element $h_\tau$ introduced in \S~\ref{3.4}. Since $h-h_\tau\in \g_e$ and
 $[h_\tau,\g(\tau,i)]\subseteq \g(\tau,i)$ for all $i$, it must be that
 $[h,\g(\tau,2p-2)]\subseteq\g(\tau,2p-2)$.
 Let $v$ be an eigenvector for $\ad h$ contained in $\g(\tau, 2p-2)$ and denote by $V$ the linear span of all $(\ad f)^i(v)$ with $0\le i\le p-1$. By construction, $v$ is a highest weight vector of the $(\ad \m)$-module $\g$ with respect to the Borel subalgebra $kh\oplus ke$ of $\m$. Note that $(\ad f)^p=0$ as
 $\m$ is a restricted Lie subalgebra of $\g$. From this it is immediate that $V$ is an $\m$-submodule of $\g$.
 Since $$[e,f]=h=h_\tau+(h-h_\tau)\in
 \textstyle{\bigoplus}_{i\ge 0}\,\g(\tau,i)$$ and $\ad e\in \ad\, \g(\tau,2)$ is injective on
 $\bigoplus_{i<0}\,\g(\tau,i)$, it must be that $f\in \bigoplus_{i\ge -2}\,\g(\tau,i)$. Therefore, $(\ad f)^i(v)\in \bigoplus_{j\ge 2(p-1-i)}\,\g(\tau,j)$ for all $i\le p-1$. This yields
 $V\subseteq k(\ad f)^{p-1}(v)+\bigoplus_{i>0}\,\g(\tau,i)$.

 Set $w:=(\ad f)^{p-1}(v)$ and $\n_+:=\bigoplus_{i>0}\,\g(\tau,i)$. Then $w\in \g(\tau,0)$ and $[w,\n_+]\subseteq \n_+$, so that $kw+\n_+$ is a solvable Lie subalgebra of $\g$. Hence the Lie subalgebra $\langle V\rangle$ of $\g$ generated by $V$ is solvable as well. Since $\ad \m$ acts on $\g$ by derivations and preserves $V$, it must normalise $\langle V\rangle$. But then
 $\widetilde{\m}:=\m+\langle V\rangle$ is a Lie subalgebra of $\g$
 and ${\rm rad}(\widetilde{\m})$ contains $\langle V\rangle\ne 0$. Since this contradicts our assumptions on $\m$, the case we are considering cannot occur. The proposition follows.
\end{proof}
\subsection{Completion of the proof of Theorem~\ref{thm:esdps}}
Having established Theorem~\ref{classicalthm} we are now in a position to
finish the proof of Theorem~\ref{thm:esdps}.
In view of the results of Section~\ref{esdps} we just need to show that
if $\g=\Lie(G)$ is a Lie algebra of type ${\rm E}_7$ then for any esdp $\h$ of $\g$
the normaliser $\widetilde{\h}$ of the socle of $\h$ is a maximal Lie subalgebra of $\g$. Since under our assumption on $p$ isogenic groups of type ${\rm E}_7$ have isomorphic Lie algebras we may assume without loss of generality that $G={\rm Aut}(\g)^\circ$ is a group of adjoint type.

Let $\m$ be a maximal subalgebra of $\g$ containing $\widetilde{\h}$ and suppose for a contradiction that $\m\ne \widetilde{\h}$. If ${\rm rad}(\m)\ne 0$ then \cite[Theorem~1.1]{P17} says that $\m=\Lie(P)$ for some maximal  parabolic subgroup $P$ of $G$. Then $\m=\l\oplus\n$ where $\l$ is a proper Levi subalgebra of $\g$ and $\n=\Lie(R_u(P))$. Let $\pi\colon\,\widetilde{\h}\to \l$ denote the restriction to $\widetilde{\h}$ of the canonical projection
$\m\twoheadrightarrow \l$. Since $\widetilde{\h}$ is semisimple and $\n$ is nilpotent, the map $\pi$ is an injective homomorphism of restricted Lie algebras. It follows that $\pi(\widetilde{\h})$ is an esdp of $\g$ contained in $\l$. By the uniqueness of ${\rm Soc}(\h)=\soc(\widetilde{\h})$ (proved in  \S~\ref{3.7}), there exists $g\in G$ such that
$g({\rm Soc}(\widetilde{\h}))={\rm Soc}(\pi(\widetilde{\h})$. But then $g$ maps $\widetilde{\h}=\n_\g({\rm Soc}(\widetilde{\h}))$ onto $\n_\g({\rm Soc}(\pi(\widetilde{\h}))$. So no generality will be lost by assuming that $\widetilde{\h}\subset \l$. Then there exists a non-trivial
semisimple element $\sigma\in G$ such that $\widetilde{\h}\subset \g^\sigma$.
In particular, $1\ne \sigma\in C_G({\rm Soc}(\h))$. However, it is established in \S~\ref{3.9} that the group $N=N_G({\rm Soc}(\h))$ acts faithfully on ${\rm Soc}(\h)$. This contradiction shows that $\m$ is not a parabolic subalgebra of $\g$.

Essentially the same reasoning enables one to rule out the case where $\m$ is
a maximal regular Lie subalgebra of $\g$. Indeed, since $p$ is a good prime for $G$ it follows from the classification of closed, symmetric subsystems of the root system of type ${\rm E}_7$ that
$\m=\g^\sigma$ for some semsimple element of prime order in $G$; see \cite[Ex. Ch.~VI, \S~4.4]{Bour}, for example. Again this is impossible since the group $C_G({\rm Soc}(\h))$ is trivial by our discussion in \S~\ref{3.9}.	
	
Finally, suppose $\m$ is semisimple and non-regular.
If ${\rm Soc}(\m)$ is not semisimple then Lemma~\ref{socle} in conjunction with the uniqueness of $\soc(\widetilde{\h})$ yields that
${\rm Soc}(\m)=\soc(\widetilde{\h})$. But then $\m=\n_\g(\soc(\m))=\n_\g(\soc(\widetilde{\h}))=\widetilde{\h}$ contradicting our assumption on $\m$. Therefore, $\soc(\m)$ is semisimple. Since $\dim\m>\dim\widetilde{\h}>\dim W(1;\underline{1})$, it follows from Theorem~\ref{classicalthm} that $\m=\Lie(M)$ for some maximal connected subgroup $M$ of $G$. Moreover, the above discussion shows that the group $M$ is semisimple and does not contain maximal tori of $G$.

A complete list of such subgroups $M$ is obtained by Liebeck--Seitz and can
be found in \cite[Table~1]{LS04}. Since
$\dim\m>\dim\widetilde{\h}\ge 3p+3$, where $p\in\{5,7\}$, only one of the following cases may occur: if $p=5$ then  $M$ is a group of type ${\rm A}_1{\rm G}_2$ or ${\rm G}_2{\rm C_3}$ or ${\rm A}_1{\rm F}_4$ and if $p=7$ then  $M$ is a group of type  ${\rm G}_2{\rm C_3}$ or ${\rm A}_1{\rm F}_4$. Since $p>3$ it follows that $\m$ is a simple Lie algebra and all its derivations are inner. Moreover, in all cases the exist restricted simple ideals $\m_1$ and $\m_2$ of $\m$ such that
$\dim \m_1>\dim \m_2$ and $\m=\m_1\oplus\m_2$. For $i=1,2$, let
$\pi_i\colon\,\widetilde{\h}\to\m_i$ denote the restriction of the canonical projection $\m\twoheadrightarrow\m_i$ to $\widetilde{\h}$. Since $\dim\m_1<\dim \widetilde{\h}$ in all cases, $\ker\pi_1$ contains a nonzero minimal ideal of $\widetilde{\h}$. Since $\soc(\widetilde{\h})$ is the only such ideal of $\widetilde{\h}$, it must be that $\soc(\widetilde{\h})\subset \m_2$. As $\dim \soc(\widetilde{\h})>14$, this rules out the case where $M$ is of type ${\rm A}_1{\rm G}_2$. Since $\widetilde{\h}\cong (S\otimes\OO(1;\underline{1}))\rtimes ({\rm Id}\otimes\mathcal{D})$ acts faithfully on $\soc(\widetilde{\h})\cong S\otimes \OO(1;\underline{1})$ and $\mathcal{D}$ is a simple Lie algebra, we also deduce that $\widetilde{\h}\cong\pi_2(\widetilde{\h})$ as restricted Lie algebras.

As a result, $\widetilde{\h}$ is isomorphic to a restricted Lie subalgebra
of a restricted Lie algebra of type ${\rm F}_4$ or ${\rm C}_3$, both of which are isomorphic to restricted Lie subalgebras of a Lie algebra of type ${\rm E}_6$. 	However, we have proved in Section~\ref{esdps} that any semismple restricted Lie subalgebra of a Lie algebra of type ${\rm E}_6$ has a semisimple socle. This contradiction shows that $\widetilde{\h}$ is a maximal
Lie subalgebra of $\g$.
\section{A version of the Borel--Tits theorem for Lie algebras}
In proving Corollary~\ref{B-T} we are going to use induction on $\rk \D G$, the semisimple rank of $G$. It is straightforward to see that the corollary holds when $\rk \D G=1$. Suppose that it holds for all reductive $k$-groups satisfying the standard hypotheses and having semismiple rank $<\rk\D G$. We may assume without loss of generality that $\h$ is a restricted Lie subalgebra of $\g$.
\subsection{Reduction to the case where $G$ is almost simple}\label{simple case}
Suppose $G$ satisfies the standard hypotheses and let $\h$ be  a Lie subalgebra of $\g=\Lie(G)$ with ${\rm nil}(\h)\ne 0$.
We denote by $\kappa$ a non-degenerate $G$-invariant symmetric bilinear form on $\g$. According to \cite[2.1]{PSt}, the Lie algebra $\g$ decomposes into a direct sum of $G$-stable ideals
\begin{equation}
\label{Lie(G)}
\g=\z\oplus\widetilde{\g}_1\oplus\cdots\oplus \widetilde{\g}_s
\end{equation}
where $\z$ is a central toral subalgebra of $\g$ and either
$\widetilde{\g}_i=\Lie(G_i)$ for some simple component $G_i$ of $G$ not of type ${\rm A}_{rp-1}$ or $\widetilde{\g}_i\cong\gl_{rp}(k)$ for some $r\in\Z_{>0}$. All summands in (\ref{Lie(G)}) are pairwise orthogonal with respect to $\kappa$. Furthermore, if $\widetilde{\g}_i\cong \gl_{rp}(k)$ then there is an irreducible component $G_i$ of type ${\rm A}_{rp-1}$ in $G$ such that $\Lie(G_i)=[\widetilde{\g}_i,\widetilde{\g}_i]$. For each $1\le i\le s$, the canonical projection $\pi_i\colon\,\g\twoheadrightarrow \widetilde{\g}_i$ is a $G$-equivariant homomorphism of restricted Lie algebras.

Suppose $s>1$.
Renumbering the $\widetilde{\g}_i$'s if necessary we may assume that there is $t\le s$ such that $\pi_i({\rm nil}(\h))\ne 0$ for $1\le i\le t$ and $\pi_i({\rm nil}(\h))=0$ for $t<i\le s$. By our induction assumption, Corollary~\ref{B-T}
holds for all Lie subalgebras $\pi_i(\h)$ of $\widetilde{\g}_i$ with $1\le i\le t$.
Hence there exist cocharacters $\lambda_i\in X_*(G_i)$ such that $\pi_i(\h)\subseteq \Lie(P_i(\lambda))$ and $\pi_i({\rm nil}(\h))\subseteq \Lie(R_u(P_i(\lambda_i)))$, where $P_i(\lambda_i)$ is the parabolic subgroup of $G_i$ associated with $\lambda_i$.  Since $X_*(G_i)\subseteq X_*(G)$ for all $i$, we may consider the cocharacter $\lambda=\sum_{i=1}^{t}\lambda_i\in X_*(G)$ and the parabolic subgroup
$P(\lambda)$ associated with $\lambda$ in $G$.
Then ${\rm nil}(\h)\subseteq \sum_{i=1}^{t}\pi_i({\rm nil}(\h))\subseteq \Lie(R_u(P(\lambda))$. Since $\Lie(P(\lambda))$ contains $\z$ and all $\widetilde{\g}_i$ with $i>t$, it must be that $\h\subseteq \Lie(P(\lambda))$. This implies that Corollary~\ref{B-T} holds for $G$.
\subsection{Reduction to the case where $G$ is exceptional}\label{classical case} By \S~\ref{simple case}, we may assume that $\g=\z\oplus\widetilde{\g}_1$. If $\widetilde{\g}_1\cong\gl(V)$, where $p\mid \dim V$, or if  $\widetilde{\g}_1\cong\sl(V)$, where $p\nmid \dim V$, then ${\rm nil}(\h)=\pi_1({\rm nil}(\h))$. Let
\begin{equation}\label{flag}0=V_0\subset V_1\subset \cdots
\subset V_m=V\end{equation} be a composition series of the $\pi(\h)$-module $V$.
By Engel's theorem, $x(V_i)\subseteq V_{i+1}$ for all
$1\le i\le m$. Due to our assumption on $\widetilde{\g}_1$, the stabiliser
$$\widetilde{\p}_1:=\{x\in\widetilde{\g}_1\,|\,\,x(V_i)\subseteq V_i, \ 1\le i\le m\}$$ of the (partial) flag of subspaces (\ref{flag}) is a parabolic subalgebra of $\widetilde{\g}_1$. Moreover, ${\rm nil}(\h)$ is contained in
${\rm nil}(\widetilde{\p}_1)=\{x\in\widetilde{\g}_1\,|\,\,x(V_i)\subseteq V_{i+1}, \ 1\le i\le m\}$.
The inverse image of $\widetilde{\p}_1$ in $\g$ under the natural epimorphism $\g\twoheadrightarrow \widetilde{\g}_1$ is a parabolic subalgebra of $\g$ and its nilradical contains ${\rm nil}(\h)$. It follows that in the present case there exists a
parabolic subgroup $P$ of $G$ such that $\h\subseteq \Lie(P)$ and
${\rm nil}(\h)\subseteq \Lie(R_u(P))$.

Now suppose that $s=1$ and $\widetilde{\g}_1=\Lie(G_1)$ where $G_1$ is a group of type ${\rm B}$, ${\rm C}$  or ${\rm D}$. Since $p\ne 2$, the Lie algebra $\widetilde{\g}_1$ is simple and we may assume without loss of generality that $\z=0$ and $\widetilde{\g}_1=\g=\Lie(G)$. We may also assume that there exists a finite dimensional vector space $V$ over $k$ and a non-degenerate bilinear form $\Psi$ on $V$ such that $\g$ coincides with
the stabiliser $\g(V,\Psi)$ of $\Psi$ in $\sl(V)$. If $W$ is an irreducible $\h$-submodule of $V$ then ${\rm nil}(\h)$ annihilates $W$ (by Engel's theorem) and either $\Psi$ vanishes on $W\times W$ or the restriction of $\Psi$ to $W$ is non-degenerate. In the first case, $W$ is totally isotropic with respect to $\Psi$ and hence $\n_\g(W)$ is a maximal parabolic subalgebra of $\g$.
In the second case, $W$ is a non-degenerate subspace of $V$ and $\h$ preserves the direct sum decomposition $V=W\oplus W^\perp$.

If $W$ is totally isotropic, the above discussion shows that there exists a maximal parabolic subgroup $P=L\cdot R_u(P)$ of $G$ with Levi subgroup $L$ such that $\h\subseteq \Lie(P)$.
If ${\rm nil}(\h)\subseteq \Lie(R_u(P))$ then we are done. If not, we apply the induction assumption to the image of $\h$  under the natural homomorphism $\Lie(P)\twoheadrightarrow \Lie(L)$ (one should keep in mind here that $\rk \D L< \rk \D G$). The inverse image in $P$ of that parabolic subgroup  will have all properties we require.

Suppose $W$ is a non-degenerate subspace of  $V$ and set $\Psi_{W}:=\Psi_{\vert W}$ and
$\Psi_{W^\perp}:=\Psi_{\vert W^\perp}$.
Since ${\rm nil}(\h)$ annihilates $W$ it must act faithfully on $W^\perp\ne 0$. There exists a semisimple connected regular subgroup $G_0=G_{0}'\cdot G_{0}''$ of $G$ such that $\Lie(G_{0}')= \g(W, \Psi_W)$ and $\Lie(G_{0}'')=\g(W^\perp,\Psi_{W^\perp})$. The normal subgroups $G_{0}'$ and $G_{0}''$ of $G_0$ commute and $$\Lie(G_0)= \g(W, \Psi_W)\oplus \Lie(G_{W^\perp},\Psi^\perp).$$ Let $\pi_2$ denote the second projection $\Lie(G_0)\twoheadrightarrow \Lie(G_{W^\perp},\Psi^\perp)=\Lie(G_{0}'')$
Although the group $G_{0}$ is not simply connected, its normal subgroup $G_{0}''$ is. Indeed, looking at the extended Dynkin diagram of the root system of $G$ it is easy to check that $G_{0}''$ coincides with the derived subgroup of a Levi subgroup of $G$. In particular, $G_{0}''$ satisfies the standard hypotheses. Let $\pi_2$ denote the second projection $\Lie(G_0)\twoheadrightarrow \Lie(G_{W^\perp},\Psi^\perp)=\Lie(G_{0}'')$.

Since ${\rm nil}(\h)$ annihilates $W$ and $\g(W,\Psi_W)$ it must be that ${\rm nil}(\h)=\pi_2({\rm nil}(\h))$. Since ${\rm nil}(\h)\ne 0$ and $\rk \D G_{0}''<\rk  \D G$, our induction assumption implies that there exists
$\lambda\in X_*(G_{0}'')$ such that $\pi_2(\h)\subseteq \Lie(P_{0}''(\lambda))$ and $\pi_2({\rm nil}(\h))\subseteq
\Lie(R_u(P_{0}''(\lambda)))$, where $P_{0}''(\lambda)$ is the parabolic subgroup of $G_{0}''$ associated with $\lambda$. Since $X_*(G_{0}'')\subset X_*(G)$ the group $P_{0}''(\lambda)$ is contained in the parabolic subgroup $P(\lambda)$ associated with $\lambda$ in $G$. Furthermore, $R_u(P_{0}''(\lambda))\subseteq R_u(P(\lambda))$.
Since $G_{0}'$ and $G_{0}''$ commute, we also have that $G_{0}'\subset P(\lambda)$. Therefore, $\h\subseteq \Lie(G_{0}')\oplus \pi_2(\h)\subseteq \Lie P(\lambda)$ and ${\rm nil}(\h)=\pi_2({\rm nil}(\h))\subseteq \Lie(R_u(P(\lambda)))$, as wanted.
\subsection{Reduction to the case where $\h$ is contained in an esdp of $\g$}
\label{non-esdp} From now  we may assume that $G$ is an exceptional group. Let $\m$ be a maximal subalgebra of $\g$
containing $\h$. We first suppose that $\m=\Lie(M)$ for some maximal connected subgroup $M$ of $G$.
If $M=L\cdot R_u(M)$ is a parabolic subgroup of $G$, then either ${\rm nil}(\h)\subseteq \Lie(R_u(M))$ or
the image $\bar{h}$ of $\h$ in $\Lie(L)$ under the canonical homomorphism $\m=\Lie(L)\oplus \Lie(R_u(M))\twoheadrightarrow \Lie(L)$ has a nonzero nilradical. In the first case we are done and in the second case our induction assumption entails that there is a parabolic subgroup $P_L$ of $L$ such that
$\bar{\h}\subseteq \Lie(P_L)$ and ${\rm nil}(\bar{\h})\subseteq \Lie(R_u(P_L))$. The inverse image, $P$, of $P_L$ in $M$ is then a parabolic subgroup of $G$  with the property that $\h\subseteq \Lie(P)$ and ${\rm nil}(\h)\subseteq \Lie(R_u(P))$.

 Suppose $M$ is a semisimple regular subgroup of $G$ and let $M_1,\ldots, M_s$  be the simple components of $M$. Analysing the extended Dynkin diagrams of the exceptional root systems and using
 \cite[Ex. Ch.~VI, \S~4.4]{Bour} one observes that $s\in\{1,2,3\}$ and  $M$ has no components of type ${\rm A}_{rp-1}$ for $r\in \Z_{>0}$ (it is crucial here that $p$ is a good prime for $G$).
 In view of \cite[Lemma~2.7]{BGP}, this implies that
 for any $i\le s$ the Lie algebra
 $\Lie(M_i)$ is simple and all its derivations are inner. As a consequence, $\m=\bigoplus_{i=1}^s\Lie(M_i)$, a direct sum of Lie algebras.

 Let $\widetilde{M}_i$ be a simply connected cover of $M_i$. Then the preceding remark
 shows  $\widetilde{M}:=\prod_{i=1}^s \widetilde{M}_i$ is a simply connected cover of $M$ and
 $\Lie(\widetilde{M})\cong \Lie(M)$ as restricted Lie algebras. Since each $\Lie(M_i)$ is a simple algebra and the restriction of the Killing form of $\g$ to $\Lie(M_i)$ is nonzero, the group $\widetilde{M}$ satisfies the standard hypotheses. If $s\ge 2$ we consider the natural projections $\pi_i\colon\, \Lie(\widetilde{M})\cong \Lie(M)\twoheadrightarrow \Lie(M_i)$, where $1\le i\le s$, and use our induction assumption to find parabolic subalgebras $\p_i$ in $\Lie(M_i)$ with $\pi_i(\h)\subseteq \p_i$ and $\pi_i({\rm nil}(\h))\subseteq {\rm nil}(\p_i)$ for all $i\le s$.
 There exists a a cocharacter $\lambda\in X_*(M)$ such that parabolic subgroup $P_M(\lambda)$ of $M$ associated with $\lambda$ has the property that  $\Lie(P_M(\lambda))=\bigoplus_{i=1}^s\,\p_i$ and $\Lie(R_u(P_M(\lambda)))=\bigoplus_{i=1}^s\,{\rm nil}(\p_i)$. Then
 $\h\subseteq \Lie(P_M(\lambda))$ and ${\rm nil}(\h)\subseteq \Lie(R_u(P_M(\lambda)))$.

 If $s=1$ then using \cite[Ex. Ch.~VI, \S~4.4]{Bour} it is straightforward to see that $\m$ is one of $\sl_3(k)$ in type ${\rm G}_2$, $\so_9(k)$ in type ${\rm F_4}$, $\sl_8(k)$ in type ${\rm E}_7$, and  $\sl_9(k)$ or $\so_{16}(k)$ in type ${\rm E}_8$.
 In each of these cases, we can repeat verbatim the arguments used in \S~\ref{classical case} to find a parabolic subgroup $P_M(\lambda)$ of $M$ with similar properties.
 Since $X_*(M)\subset X_*(G)$ we can take the parabolic subgroup $P(\lambda)$ of $G$  associated with $\lambda\in X_*(G)$ for a desired parabolic subgroup $P$ of $G$. Here we use the fact that $\m\cap \Lie(P(\lambda))=
 \Lie(P_M(\lambda))$ and $\m\cap \Lie(R_u(P(\lambda)))=\Lie(R_u(P_M(\lambda)))$.

If $M$ is non-regular then $M$ is semisimple and $\rk \D M<\rk \D G$. Let $\widetilde{M}$ be a simply connected cover of $M$. A quick look at the list in \cite[Table~1]{LS04} reveals that in all cases $\m=\Lie(M)$ is a direct sum of simple Lie algebras each of which admits a non-degenerate Killing form. It follows that $\m\cong \Lie(\widetilde{M})$ and $\widetilde{M}$ satisfies the standard hypotheses.
By our induction assumption, there exists a cocharacter $\tilde{\lambda}\in X_*(\widetilde{M})$ such that
$\h\subseteq\Lie(P_{\widetilde{M}}(\tilde{\lambda}))$ and ${\rm nil}(\h)\subseteq \Lie(R_u(P_{\widetilde{M}}(\tilde{\lambda})))$. A central isogeny $\widetilde{M}\to M$ gives rise to a map $X_*(\widetilde{M})\to X_*(M)$.
Let $\lambda$ denote the image of $\tilde{\lambda}$ in $X_*(M)\subset X_*(G)$.
 Arguing as before, we now take for $P$ the parabolic subgroup $P(\lambda)$ of $G$ associated with $\lambda\in X_*(G)$.

If $\m$ is a maximal Witt subalgebra of $\g$, then $\m$ is a restricted Lie subalgebra of $\g$ and $p-1$ is the Coxeter number of $G$. By Theorem~\ref{classicalthm}(ii), we may assume that $\m=\langle f,e\rangle$ where  $e=e_{\tilde{\alpha}}$ and $f= \sum_{\alpha\in \Pi}\,e_{-\alpha}$. Here $f$ corresponds of a nonzero multiple of
$\partial\in W(1;\underline{1})$  and $e$ plays a role of $x^{p-1}\partial\in W(1;\underline{1})_{(p-2)}$.
There exists a cocharacter $\tau\in X_*(G)$ such that $f\in \g(\tau, -2)$ and $e\in \g(\tau, 2(p-2))$.
Let $P$ denote the parabolic subgroup of $G$ associated with $\tau$. If $0\le r\le p-1$ then
$(\ad f)^r(e)\in \g(\tau, 2(p-1-r))\subset \Lie(P)$.

To ease notation we identify $\m$ with $W(1;\underline{1})$. If $\h$ is contained in the standard maximal subalgebra $W(1;\underline{1})_{(0)}$ then the above show that $\h\subset \Lie(P)$. Since all nilpotent elements of $W(1;\underline{1})_{(0)}$ lie in $W(1;\underline{1})_{(1)}$, i.e. in the span of all
$(\ad f)^r(e)$ with $0\le r\le p-2$, we also have that ${\rm nil}(\h)\subset \Lie(R_u(P))$.
If $\h$ is not contained in $W(1;\underline{1})_{(0)}$ then $\h$ is a transitive restricted subalgebra of $\g$. Since ${\rm nil}(\h)\ne 0$, it follows from Lemma~\ref{tran} that $\dim \h\le 2$. If $\dim \h=1$ then $\h={\rm nil}(\h)$ is contained in the Lie algebra of the optimal parabolic subgroup $P(x)$ of any
nonzero $x\in \h$; see \cite [Theorem~A]{P03}. if $\dim \h=2$ then
Lemma~\ref{tran} implies that $\h$ is spanned by a nilpotent element $x$ and a semisimple element $y$ such that $[x,y]=y$. Then $\h\subseteq \n_\g(k x)$.  Since $\n_\g(k x)$ is contained in $\Lie(P(x))$ and ${\rm nil}(\h)=k x$ is contained in $\Lie(R_u(P(x)))$ by \cite[Theorem~A]{P03}, we conclude that the corollary holds if $\m$ is either isomorphic to a maximal Witt subalgebra of $\g$ or has form $\Lie(M)$ for some maximal connected subgroup $M$ or $G$.
\subsection{Lie subalgebras of exotic semidirect products}
It remains to consider the case where $\m=\n_\g(\soc(\m))$ is a maximal esdp of $\g$ and $\h\subset \m$.
By Theorem~\ref{thm:esdps}, this implies that $G$ is a group of type ${\rm E}_7$ and $p\in\{5,7\}$. In view of  of our results in \S\S~\ref{simple case}--\ref{non-esdp} we just need to find a proper parabolic subgroup $P$ of $G$ such that $\h\subseteq \Lie(P)$. In doing so we may assume without loss of generality that $G$ is a group of adjoint type and $\h=\n_\g({\rm nil}(\h))$ is a restricted Lie subalgebra of $\g$. We shall use freely the notation and conventions of \S\S~\ref{3.6}--\ref{3.9}

As $\m\cong (S\otimes \OO(1;\underline{1}))\rtimes ({\rm Id}\otimes \mathcal{D})$ and $\mathcal{D}\subseteq W(1;\underline{1})$, we have a natural homomorphism of Lie algebras  $\pi\colon\,\m\to W(1;\underline{1})$.
If $\pi(\h)$ is not a transitive subalgebra of $W(1;\underline{1})$, then $\h\subseteq \Lie(N)$, where $N=N_G(\soc(\m))$, and there exists a non-trivial cocharacter $\lambda\in X_*(C_{e\otimes 1})$ such that
$N\subset P(\lambda)$.  More precisely, our discussion in \S~\ref{3.9} shows that $\lambda$ comes from a regular subgroup of type ${\rm A}_1$ in $C_{e\otimes 1}$ and has the property that $\soc(\m)\subset \bigoplus_{i\ge 0}\,\g(\lambda,i)$. Since $\h\subseteq \Lie(P(\lambda))$ this sorts out the case where $\pi(\h)\subseteq W(1;\underline{1})_{(0)}$.
From now we may assume that $\pi(\h)$ is a transitive restricted subalgebra of $W(1;\underline{1})$.

Let ${\rm nil}_0(\h)={\rm nil}(\h)\cap \soc(\m)$ and suppose that ${\rm nil}_0(\h)\ne 0$. This will be our main case and
to ease notation we shall identify $\m$ with $(S\otimes \OO(1;\underline{1}))\rtimes ({\rm Id}\otimes \mathcal{D})$. Let $\varphi\colon\, S\otimes \OO(1;\underline{1})\twoheadrightarrow S$ denote the evaluation map which sends any
$\sum_{i=0}^{p-1} a_i\otimes x^i\in S\otimes\OO(1;\underline{1})$ with $a_i\in S$ to $a_0$. Obviously, $\varphi$ is a homomorphism of Lie algebras and $\ker\varphi=S\otimes \OO(1;\underline{1})_{(1)}$.

 (a) If the transitive subalgebra $\pi(\h)$ of $W(1;\underline{1})$ consists of semisimple elements, then Lemma~\ref{tran} yields that $\sigma(\pi(\h))=k(1+x)\partial$ for some $\sigma \in {\rm Aut}(W(1;\underline{1}))$ (in particular, $\dim\pi({\rm nil}(\h))=1$).
  If $\pi(\h)$ contains nonzero nilpotent elements, then Lemma~\ref{tran} says that there is a $\sigma\in {\rm Aut}(W(1;\underline{1}))$ such that $\partial\in \sigma(\pi(\h))$.
 If $p=7$ then the description of the group $N$ given in \S~\ref{3.9} yields that in any event $\sigma$ comes from the subgroup $C_{e\otimes 1}\cap N$ of $N$. If $p=5$ then the discussion in \S~\ref{3.9} shows that
 $\c_e\cong \sl_2(k)$ identifies with the Lie algebra $k\partial\oplus (x\partial)\oplus k(x^2\partial)$ and $\Lie(N)\cap\c_{e\otimes 1}=\Lie(B_{e\otimes 1}^+)$ identifies with $k(x\partial)\oplus k(x^2\partial)$. Since $\pi({\rm nil}(\h))$ is not contained in the Borel subalgebra $\Lie(B_{e\otimes 1}^+)$ of $\c_{e\otimes 1}\cong\sl_2(k)$, there exists an element $g\in B_{e\otimes 1}^+$ such that in the respective cases either $\pi(g({\rm nil}(\h))=k (1+x)\partial$ or $\partial\in \pi(g({\rm nil}(\h))$. In other words, we may assume without loss of generality that either $\pi(\h)=k (1+x)\partial$ or $\partial\in \pi(\h)$.

 (b) Suppose $\partial\in \pi(\h)$ and let $D\in\h$ be such that $\pi(D)=\partial$.  Then $D=({\rm Id}\otimes \partial)+\sum_{i=0}^{p-1}v_i\otimes x^i$ for some $v_i\in S$. Recall from \S~\ref{3.9} that $N\subseteq {\rm Aut}(S\otimes \OO(1;\underline{1}))$ contains a unipotent normal subgroup $\mathcal{R}$ generated by all
$\exp(\ad v)\in N$ with $v\in S\otimes \OO(1;\underline{1})_{(1)}$.
Since $[{\rm Id}\otimes \partial,v\otimes x^{i+1}]=(i+1)v\otimes x^i$ for all $v\in S$ and $0\le i\le p-2$, we can find an element $g\in \mathcal{R}$ such that $g(D)=({\rm Id}\otimes \partial)+u\otimes x^{p-1}$ for some
$u\in S$. So let us assume from now that $D$ itself has this form.
Since $0\ne {\rm nil}_0(\h)\subseteq S\otimes \OO(1;\underline{1})$ and $\pi(D)=\partial$, it is easy to see that
$\varphi({\rm nil}_0(\h))$ is a nonzero ideal of $\varphi(\h)$ consisting of nilpotent elements of $S\otimes 1$. Since $S\cong\sl_2(k)$, it follows that $\varphi({\rm nil}_0(\h))=k n$ for some nilpotent element $n\in S\otimes 1$. Applying to $n$ a suitable element $g'$ from the subgroup $\mathcal{H}={\rm Aut}(S\otimes 1)$ of $N$, we may assume further that $n=e\otimes 1$. This replacement will transform $D$ to $g'(D)=({\rm Id}\otimes \partial) +u'\otimes x^{p-1}$ where $u'=g(u)\in S$.

We claim that ${\rm nil}_{0}(\h)\subseteq e\otimes \OO(1;\underline{1})$. Indeed, if this is not the case, then ${\rm nil}_0(\h)$ contains an element $v=\sum_{i=0}^{p-1}b_i\otimes x^i$ such that $b_r\not\in k e$ for some
$r\in\{1,\ldots, p-1\}$. Since $D=({\rm Id}\otimes \partial) +u'\otimes x^{p-1}$, it is straightforward to check that $\varphi\big((\ad D)^r(v)\big)
=r!(b_r\otimes 1)\not\in k(e\otimes 1)$. This, however, contradicts the fact that $\ad D$ preserves ${\rm nil}_0(\h)$. The claim follows. Since $\varphi
 ({\rm nil}_0(\h))\ne 0$, there exists $a\in\OO(1;\underline{1})^\times $ such that $e\otimes a\in {\rm nil}_0(\h)$.
Let $$w=({\rm Id}\otimes d)+\textstyle{\sum}_{i=0}^{p-1}\,w_i\otimes x^i$$ be an arbitrary element of $\h$, where $d\in\mathcal{D}$ and $w_i\in S$. Since   $\ad d$ preserves
$e\otimes \OO(1;\underline{1})$ and
$[w,e\otimes a]\in {\rm nil}_0(\h)\subseteq e\otimes \OO(1;\underline{1})$, it must be that $w_i\in
\n_S(ke )$ for all $i$ (one should keep in mind here that the elements $ax^i$ with $0\le i\le p-1$ are linearly independent in $\OO(1;\underline{1})$). As a consequence, $\h\subseteq (\b\otimes \OO(1;\underline{1}))\rtimes ({\rm Id}\otimes \mathcal{D})$ where $\b=kh\oplus k e=\n_S(ke)$. But then $\h\subseteq \Lie(P(\tau))$ where $P(\tau)$ is the optimal parabolic subgroup of $e\otimes 1\in \g$.

(c) Suppose now that $\pi(\h)=k (1+x)\partial$ and let $D\in \h$ be such that $\pi(D)=(1+x)\partial$. Since $\h$ is a restricted subalgebra of $\g$ and $\h\cap (S\otimes \OO(1;\underline{1}))$ is a  restricted ideal of $\h$, it follows from Jacobson's formula for $[p]$th powers that $\varphi(D^{[p]^n})=
(1+x)\partial$ for all $n\in \Z_{>0}$. Therefore,  $D$ can be assumed to be a semisimple element of $\h$. As mentioned in \S~\ref{intro1}, the toral subalgebra of $\h$ spanned by all $D^{[p]^i}$ with $i\ge 0$, has a $k$-basis
consisting of toral elements of $\h$. So we may assume without loss of generality that $D$ is a toral element of $\h$.

Write  $D=({\rm Id}\otimes (1+x)\partial) +D_0$, where $D_0\in S\otimes\OO(1;\underline{1})$, and  recall from \S~\ref{3.9} the centroid $\mathcal{C}\cong \OO(1;\underline{1})$ of $S\otimes \OO(1;\underline{1})$.
Since $\ad D_0$ commutes with $\mathcal{C}$ we have that
\begin{equation}
\label{DD}
[D, c u]\,=\big((1+x)\partial(c)\big)u+c[D,u]\qquad\quad \big(\forall\, c\in\mathcal{C},\,\forall\,u\in S\otimes \OO(1;\underline{1})\big).\end{equation}
It follows that if $u$ is an eigenvector for $\ad D$ and $c$ is an eigenvector for $(1+x)\partial$, then
$c^iu$ is an eigenvector for $\ad D$ for any $i\in\{0,\ldots, p-1\}$.
Since $\big((1+x)\partial\big)((x+1)^i)=i(x+1)^i$ for all $i$, it follows from (\ref{DD}), that the restricted enveloping algebra $u(kD)\cong k[t]/(t^p-t)$ acts freely on $S\otimes \OO(1;\underline{1})$ and the evaluation map $\varphi$ induces a Lie algebra isomorphism between the fixed-point algebra  $(S\otimes\OO(1;\underline{1}))_D$ and $S\otimes 1$. As
$(S\otimes\OO(1;\underline{1}))_D\cong S\otimes 1$ is spanned by a nonzero $\sl_2$-triple of $S\otimes \OO(1;\underline{1})$ and all such
$\sl_2$-triples are conjugate under the action of $N$
by Theorem~\ref{thm:esdps}(iii), we may assume without loss of generality that $(S\otimes \OO(1;\underline{1}))_D=S\otimes 1$. From this it is immediate that $D_0=0$, i.e. $D=(1+x)\partial$ (we refer to \cite[\S~2]{PS2} for more general results on normalising toral subalgebras in the presence of centroids).

Since the subspace ${\rm nil}_0(\h)$ of $\h$ is $(\ad D)$-stable and $\varphi({\rm nil}_0(\h)) =k e$, it must be that $e\otimes (1+x)^i\in
{\rm nil}_0(\h)$ for some $i$. Since $(1+x)^i\in\OO(1;\underline{1})^\times$ for all $i$, we can repeat the argument used at the end of part~(b) to conclude that a $G$-conjugate of $\h$ is contained in $(\b\otimes \OO(1;\underline{1}))\rtimes k({\rm Id}\otimes (1+x)\partial)$. As before, this implies that $\h$ is contained in a proper parabolic subalgebra of $\g$.

(d) Finally, suppose that $\pi(\h)$ is transitive in $W(1;\underline{1})$ and ${\rm nil}_0(\h)=0$. In this case $\pi$ maps ${\rm nil}(\h)$ isomorphically onto a nonzero nilpotent ideal of $\pi(\h)$. Hence the Lie algebra $\pi(\h)$ is not simple and contains nonzero nilpotent elements. In view  of Lemma~\ref{tran} this means that ${\rm nil}(\h)\cong\pi({\rm nil}(\h))$ is spanned by a single nilpotent element of $\g$, say $y$. But then $\h=
\n_\g({\rm nil}(\h))=\n_\g(k y)$ is contained in $\Lie(P(y))$ where $P(y)$ is the optimal parabolic subgroup of $y\in \g$; see \cite[Theorem~A]{P03}.

As a result, if $\m$ is a maximal esdp of $\g$ and $\h\subset\m$, then
$\h$ is contained in a proper parabolic subalgebra of $\g$.
In view of Theorems~\ref{thm:esdps} and \ref{classicalthm} and our discussion in \S\S~\ref{simple case} and \ref{classical case} the proof of Corollary~\ref{B-T} is now complete.


\bibliographystyle{amsalpha}

\end{document}